\DeclareSymbolFont{cyrletters}{OT2}{wncyr}{m}{n}
\DeclareMathSymbol{\Sha}{\mathalpha}{cyrletters}{"58}
\def\F{{\rm \mathbb{F}}}
\def\Z{{\rm \mathbb{Z}}}
\def\Q{{\rm \mathbb{Q}}}
\def\C{{\rm \mathbb{C}}}
\def\R{{\rm \mathbb{R}}}
\def\P{{\rm \mathbb{P}}}
\def\Fbar{{\rm \bar F}}
\def\J{{\rm \mathcal{J}}}
\def\Aut{{\rm Aut}}
\def\coker{{\rm coker}}
\def\la{{\rm \longrightarrow \,}}
\def\Stab{{\rm Stab}}
\def\avg{{\rm avg}}
\def\ur{{\rm ur}}
\def\Sym{{\rm Sym}}
\def\SL{{\rm SL}}
\def\GL{{\rm GL}}
\def\Vol{{\rm Vol}}
\def\Gal{{\rm Gal}}
\def\Disc{{\rm Disc}}
\def\Delta{{\rm disc}}
\def\Res{{\rm Res}}
\def\Sel{{\rm Sel}}
\def\sol{{\rm sol}}
\def\ls{{\rm loc.\hspace{.5mm} sol.}}
\newtheorem{theorem}{Theorem}
\newtheorem{lemma}[theorem]{Lemma}
\newtheorem{corollary}[theorem]{Corollary}
\newtheorem{proposition}[theorem]{Proposition}
\newtheorem{remark}[theorem]{Remark}
\title{A positive proportion of elliptic curves $y^2=x^3+k$ have rank 0}
\title{Selmer groups and binary cubic forms}
\title{On the 3-isogeny Selmer groups of elliptic curves of \\$j$-invariant zero}
\title{The average size of the 3-isogeny Selmer groups of elliptic curves $y^2 = x^3 + k$}
\author{Manjul Bhargava, Noam Elkies, and Ari Shnidman}
\begin{document}

\maketitle

\vspace{-.1in}
\begin{abstract}
The elliptic curve $E_k \colon y^2 = x^3 + k$ admits a natural 3-isogeny $\phi_k \colon E_k \to E_{-27k}$.  We compute the average size of the $\phi_k$-Selmer group 
as $k$ varies over the integers.  Unlike previous results of Bhargava and Shankar on $n$-Selmer groups of elliptic curves,  we show that this average can be very sensitive to congruence conditions on $k$; this sensitivity can be precisely controlled by the Tamagawa numbers of $E_k$ and $E_{-27k}$.  As consequences, we prove that the average rank of the curves $E_k$, $k\in\Z$,  is less than 1.21 and over $23\%$ (resp.\ $41\%$) of the curves in this family have rank~0 (resp.\ 3-Selmer rank 1).  
\end{abstract}


\section{Introduction}
  
Let $F$ be a field of characteristic not 2 or 3, and let $k \in F$ be non-zero.  The elliptic curve  
\begin{equation}E_k \colon y^2 = x^3  + k\end{equation}
has $j$-invariant 0, and every elliptic curve $E/F$ with $j(E) = 0$ is isomorphic to $E_k$ for some $k \in F$.  The curves $E_k$ and $E_{k'}$ are isomorphic if and only if $k' = km^6$ for some $m \in F^\times$.  

Over a separable closure $\bar F$, each $E_k$ has complex multiplication by the Eisenstein integers $\Z[\zeta_3]$, and hence admits an endomorphism $\sqrt{-3}$ of degree 3.  This endomorphism is not generally defined over $F$, but its kernel is, and consequently there is a 3-isogeny $\phi=\phi_k \colon E_k \to E_{k'}$ for some $k' \in F$.  By duality, there is a $3$-isogeny $\hat\phi=\hat\phi_k\colon E_{k'} \to E_k$ in the reverse direction.  In fact, we may take $k' = -27k$, and then these isogenies are given explicitly as:
\begin{align}
\label{phidef}
\phi_k\colon 
(x,y)& \; \mapsto \,
  \left(\,\frac{x^3+4k}{x^2}\,\,,\,\frac{y(x^3-8k)}{x^3}\right),\\
\hat \phi_k\colon\label{phihatdef}
(x,y) &\; \mapsto \,
  \left(\,\frac{x^3-108k}{9x^2}\,\,,\,\frac{y(x^3+216k)}{27x^3}\right).
\end{align}
After identifying $E_{3^6k}$ and $E_k$, we have $\hat \phi_k = \phi_{-27k}$.

If $F$ is a number field, then the {\it $\phi$-Selmer group} 
\[\Sel_\phi(E_k) \subset H^1(G_F, E_k[\phi])\] associated to the isogeny $\phi$ consists of all ``locally soluble'' cohomology classes, i.e., those that are locally in the image of the connecting map 
\[\partial_v \colon E_{-27k}(F_v) \la H^1(G_{F_v}, E_k[\phi])\]
for every place $v$ of $F$; here, $G_F = \Gal(\bar F/F)$.  \pagebreak


In this paper, we take $F=\Q$ and study the average size of the $\phi$-Selmer group for the elliptic curves $E_k$ as $k$ varies. Let

\begin{equation}
\label{eq:r}
r = \frac{103 \cdot 229}{2 \cdot 3^2 \cdot 7^2 \cdot 13}\prod_{p\,\equiv\, 5 \rm{\:(mod\: 6)}}
\frac{
   (1-p^{-1})
   (1 + p^{-1} + \frac53 p^{-2} + p^{-3} + \frac53 p^{-4} + p^{-5})
   }{1-p^{-6}}. 
\end{equation}
Then we prove:

\begin{theorem}\label{main}
  When the elliptic curves $E_k\colon y^2=x^3+k$, $k\in\Z$, are ordered by the absolute value of~$k$, 
the average size of the $\phi_k$-Selmer group associated to
  the $3$-isogeny $\phi_k \colon E_k \to E_{-27k}$ is $1+r$ if $k$ is negative, and $1+r/3$ if $k$ is positive.
\end{theorem}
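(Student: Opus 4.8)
The plan is to prove Theorem~\ref{main} by an orbit-counting argument: first parametrize $\Sel_{\phi_k}(E_k)$ by equivalence classes of integral binary cubic forms of a prescribed discriminant, and then count those forms on average via the geometry of numbers, in the style of Davenport and of Bhargava--Shankar. For the parametrization one starts from the $G_\Q$-module structure of $E_k[\phi_k]$: its nontrivial points are $(0,\pm\sqrt k)$, so it is the quadratic twist of the constant module $\F_3$ by $\Q(\sqrt k)$ (and dually $E_{-27k}[\hat\phi_k]$ is the twist by $\Q(\sqrt{-3k})$, the Weil pairing landing in $\mu_3$). By \eqref{phidef}, pulling back a point $(X,Y)\in E_{-27k}(\Q)$ along $\phi_k$ amounts to solving $x^3-Xx^2+4k=0$, a cubic of discriminant $16kY^2$; carrying this construction out over $H^1(G_\Q,E_k[\phi_k])$ and imposing the appropriate integrality and maximality conditions yields a bijection between $\Sel_{\phi_k}(E_k)$ and a set of $\GL_2(\Z)$-equivalence classes of integral binary cubic forms of discriminant $c\,k$, for a fixed positive constant $c$, namely those classes that are soluble at every place. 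In this dictionary the identity of $\Sel_{\phi_k}(E_k)$ corresponds to a single distinguished, reducible form and every other Selmer element to an irreducible, everywhere locally soluble form, so $|\Sel_{\phi_k}(E_k)|=1+\#\{\text{irreducible classes of this kind}\}$.

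Summing over $k$ then reduces Theorem~\ref{main} to an asymptotic count: $\sum_{0<\pm k<X}\bigl(|\Sel_{\phi_k}(E_k)|-1\bigr)$ equals the number of $\GL_2(\Z)$-classes of irreducible, everywhere locally soluble integral binary cubic forms whose discriminant lies in $(-cX,0)$, resp.\ $(0,cX)$, and since there are $\sim X$ integers $k$ of each sign with $|k|<X$ it suffices to prove these two counts are $\sim rX/c$ and $\sim(r/3)X/c$. One obtains them by realizing integral binary cubic forms as a lattice in the real points of the $\GL_2$-representation $\Sym^3$, fixing a fundamental domain for $\GL_2(\Z)$, and applying Davenport's lemma on lattice points in homogeneously expanding semialgebraic regions --- averaged over a compact family of fundamental domains to deal with non-compactness --- after showing that the cusp of the domain contributes only reducible forms up to a negligible error. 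The difference between the two cases lives entirely at the real place: binary cubic forms of negative discriminant (one real root) and of positive discriminant (three real roots) have different real orbit structures and stabilizer orders, and this yields the factor $3$, exactly the Davenport--Heilbronn ratio between complex and totally real cubic rings.

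Everywhere local solubility is imposed by sieving over all primes. Each finite prime $p$, and the real place, cuts out a condition of positive local density, and one needs a uniformity estimate --- bounding, uniformly in $p$, the number of forms of discriminant up to $Y$ that fail the maximality or solubility condition at $p$ --- for the sieve to converge; granting it, the counts are asymptotic to $\mathrm{vol}_\infty^{\pm}\cdot\prod_p\mathrm{vol}_p\cdot X/c$. It then remains to evaluate the local volumes. Local solubility of the covering at $p$ translates, via the explicit equations and the reduction type of $E_k$ at $p$, into a condition governed by the Tamagawa numbers $c_p(E_k)$ and $c_p(E_{-27k})$; this is where the sensitivity to congruences on $k$ enters --- negligible for $p\equiv1\pmod 3$, genuine for $p\equiv2\pmod 3$, and most intricate at $p=2$ and $p=3$. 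Carrying out this computation turns $\mathrm{vol}_\infty^{\pm}\cdot\prod_p\mathrm{vol}_p$ into exactly the quantity $r$ of \eqref{eq:r} for negative $k$, and $r/3$ for positive $k$, completing the proof.

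I expect two genuine obstacles. The first is making the parametrization exact --- matching $\Sel_{\phi_k}(E_k)$ orbit-for-orbit with an explicitly described family of integral forms, in particular isolating the one distinguished reducible orbit and handling sporadic extra rational torsion for special $k$ (a density-zero but nonempty set of exceptions). The second is the uniformity estimate underpinning the sieve, the technical core of the geometry-of-numbers input. By contrast the local density computation, though long, is essentially mechanical; it is also precisely where the congruence-sensitivity emphasized in the abstract becomes visible, including the separate behavior of the small primes appearing in $r$.
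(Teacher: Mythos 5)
Your proposal is correct and follows essentially the same route as the paper: parametrize Selmer elements by everywhere locally soluble orbits of integer-matrix binary cubic forms of discriminant proportional to $k$ (with the identity element corresponding to the unique reducible orbit), count by Davenport's lemma plus a sieve requiring a uniformity estimate, and evaluate the local densities as Tamagawa-number ratios, the factor $3$ between the two signs of $k$ coming from the archimedean place exactly as you say. The one point where your plan as stated would need adjusting is the claimed bijection with integral classes cut out by ``integrality and maximality conditions'': a locally soluble $\SL_2(\Q)$-orbit generally contains several integral $\SL_2(\Z)$-orbits (there is no clean maximality condition selecting one), so the paper instead proves that every locally soluble rational orbit has \emph{some} integral representative and then counts integral orbits weighted by $1/m(f)$, using a local--global mass formula (Proposition~\ref{stabs}) to ensure each Selmer element contributes total weight $1$ and the weighted local densities multiply.
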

We note that, in this theorem, we may take $k$ to range over all
integers or, if desired, only the sixth-power-free ones (so that we obtain each isomorphism class of elliptic curve of $j$-invariant 0 exactly once).
We can calculate the product in~(\ref{eq:r}) efficiently by approximating
it by a product of powers of the values at $s=2,3,4,\ldots$ of
$(1-2^{-s}) (1-3^{-s}) \zeta(s)$ and
$(1+2^{-s}) L(\chi,s)$ where $\chi$ is the Legendre symbol mod~$3$;
we find that the product's numerical value is $1.033735512017364858\ldots$,
so $r = 2.1265\ldots$, making
$1+r= 3.1265\ldots$ and $1+r/3 = 1.7088\ldots$.

In fact, we are able to determine the average size of the $\phi_k$-Selmer group of $E_k\colon y^2=x^3+k$ where $k$ varies in any subset $S$ of $\Z$ defined by finitely many---or, in suitable cases, infinitely many---congruence conditions.  
Let $S \subset \Z$ be any subset of integers defined by sign conditions and congruence conditions modulo a power of each prime, such that for all sufficiently large primes $p$, the closure $S_p$ of $S$ in $\Z_p$ contains all elements of $\Z_p$ not
divisible by $p^2$.  We call such a set {\it acceptable}.  For example, the set of all sixth-power-free
integers is acceptable.  The set of all positive squarefree integers
congruent to 1 (mod~3) is also acceptable.  

For any isogeny $\phi \colon A \to A'$ of abelian varieties over $\Q$ and for any prime $p \leq \infty$, define the {\it local Selmer ratio of $\phi$} at $p$ to be 
\begin{equation}\label{deflsr}
c_p(\phi) := \displaystyle\frac{|A'(\Q_p)/\phi(A(\Q_p))|}{|A[\phi](\Q_p)|}
\end{equation}
where $\Q_p=\R$ when $p=\infty$. 
Then the following theorem gives the average size of the $\phi_k$-Selmer group as $k$ varies over the integers in $S$:

\begin{theorem}\label{mainS}
Let $S$ be any acceptable set of integers.
When the elliptic curves
 $E_k\colon y^2=x^3+k$, $k\in S$, are ordered by the absolute value of $k$, the average size of the $\phi$-Selmer group associated to
 the $3$-isogeny $\phi \colon E_{k}\to E_{-27k}$ is 
\begin{equation}\label{mainSformula}
1 + \prod_{p \leq \infty} \displaystyle\frac{\displaystyle\int_{k \in S_p} c_p(\phi_k) dk}{\displaystyle\int_{k \in S_p} dk},  
\end{equation}
where for $p < \infty$, $S_p$ denotes the $p$-adic closure of $S$ in $\Z_p$ and $dk$ denotes the usual additive Haar measure on $\Z_p$, and for $p=\infty$, $S_p$ denotes the smallest interval in $\R$ containing $S$ and $dk$ denotes the usual additive measure on $\R$. 
\end{theorem}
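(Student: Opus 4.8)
The plan is to establish Theorem~\ref{mainS} by an orbit-counting argument in the style of Bhargava--Shankar, built on the classical correspondence between binary cubic forms and cubic rings (Delone--Faddeev, Davenport--Heilbronn). Since $\Sel_\phi(E_k)$ always contains the identity class, it suffices to show that the average over $k\in S$ (ordered by $|k|$) of the number of \emph{non-identity} elements of $\Sel_\phi(E_k)$ equals $\prod_{p\le\infty}\big(\int_{S_p}c_p(\phi_k)\,dk/\int_{S_p}dk\big)$. The algebraic heart is a parametrization theorem: for each $k$ there is a bijection between $\Sel_\phi(E_k)$ and the set of $\GL_2(\Z)$-equivalence classes of integral binary cubic forms $f\in\Sym^3(\Z^2)$ with $\Disc(f)=-27k$ (in a fixed normalization) that are everywhere locally soluble---i.e.\ the genus-one curve attached to $f$ has a point over every completion of $\Q$---under which the identity class corresponds to a single distinguished (reducible) orbit. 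This rests on arithmetic invariant theory for $(\GL_2,\Sym^3)$: over a field, the fibre of the discriminant over $-27k$ is a torsor under $E_k[\phi]$, which is a quadratic twist of $\mu_3$, so rational orbits compute $H^1(\Q,E_k[\phi])$; the integral structure and the identification of ``locally soluble'' with the local Selmer conditions is where the Delone--Faddeev dictionary and care with the twisting field enter.

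Granting the parametrization, the average becomes a lattice-point count: one counts $\GL_2(\Z)$-orbits of integral binary cubic forms $f$ with $0<|\Disc(f)|<27X$ (of each sign), $-\Disc(f)/27\in S$, and $f$ everywhere locally soluble, lying in a fixed fundamental domain for $\GL_2(\Z)$ on $\Sym^3(\R^2)$, and then divides by $\#\{k\in S:|k|<X\}$. Reduction theory of binary cubic forms is classical and, unlike the higher-degree coefficient spaces of Bhargava--Shankar, free of cuspidal subtleties: Davenport's determination of the number of binary cubic forms of bounded discriminant does precisely this count, and it need only be refined to impose congruence conditions mod $p^j$ at finitely many $p$. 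The main term of the count is the volume of the relevant region, and by the product formula for the $\GL_2$-invariant this volume factors as a product over the places of $\Q$ of local volumes of soluble forms with invariant in $S_p$.

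The crux is then a local computation at each place $p\le\infty$: the density of integral binary cubic forms that are soluble over $\Q_p$ and have invariant in $S_p$, as a fraction of all integral forms with invariant in $S_p$, equals $\int_{S_p}c_p(\phi_k)\,dk/\int_{S_p}dk$. I would prove this by a Jacobian change of variables on $\Sym^3(\Z_p^2)$---passing from the four form-coefficients to the invariant $k$ together with a coordinate on the $E_k[\phi]$-torsor---combined with the local description of $H^1(\Q_p,E_k[\phi])$ and of the image of the connecting map $\partial_p$: the measure of the soluble $\GL_2(\Q_p)$-orbits in the fibre over $k$ is exactly $|E_{-27k}(\Q_p)/\phi E_k(\Q_p)|/|E_k[\phi](\Q_p)|=c_p(\phi_k)$ times the measure of the whole fibre. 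At $p=\infty$ this yields the factor $1$ when $k<0$ and $\tfrac{1}{3}$ when $k>0$, reflecting whether $E_k[\phi]$ is split over $\R$ (equivalently whether the cubic form has one or three real roots)---this is the source of the sign-dependence in Theorem~\ref{main}. At $p=2,3$ and at primes dividing $k$ one computes $c_p(\phi_k)$ case by case as $k$ ranges over congruence classes of $\Z_p$; it is exactly the genuine variation of $c_p(\phi_k)$ with $k\in\Z_p$, governed by the Tamagawa numbers of $E_k$ and $E_{-27k}$, that makes the global average sensitive to congruence conditions.

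Finally, to go from finitely many congruence conditions to an arbitrary acceptable set $S$---where a condition is imposed at every prime---one needs a uniformity estimate: the number of integral binary cubic forms with $|\Disc(f)|<Y$ and $p^2\mid\Disc(f)$ is $O(Y/p^{2-\epsilon})$ uniformly in $p$, so that after an inclusion--exclusion the tail $\sum_{p>M}$ contributes $o(X)$ as $M\to\infty$. The acceptability hypothesis---that for all large $p$ the closure $S_p$ contains every element of $\Z_p$ not divisible by $p^2$---is precisely what makes this sieve converge, since for such $p$ the local factor differs from $1$ by $O(1/p^2)$. I expect the principal obstacles to be (i) pinning down the integral parametrization with the correct local solubility conditions and the right handling of the $\mu_3$-twist, and (ii) the family of local computations of the preceding paragraph, especially at $2$, $3$, and the primes of bad reduction; the geometry-of-numbers and sieving steps should, by contrast, follow the classical lines of Davenport and Davenport--Heilbronn thanks to the low dimension of $\Sym^3$.
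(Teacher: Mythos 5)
Your overall architecture -- parametrize non-identity Selmer elements by locally soluble integral orbits of binary cubic forms, count orbits of bounded discriminant \`a la Davenport with a squarefree sieve, and identify each local density with $c_p(\phi_k)$ -- is exactly the strategy of the paper. But there are two concrete gaps in the way you set up the parametrization. First, you invoke the Delone--Faddeev correspondence on the full lattice $\Sym^3\Z^2$ under $\GL_2(\Z)$, i.e.\ the dictionary with \emph{cubic} rings. That is essentially the Satg\'e--Fouvry route, and it does not directly produce the torsor structure you assert: the quadratic resolvent seen by the cubic-ring dictionary is $\Q(\sqrt{\Disc(f)})$, whereas the stabilizer of $f$ in $\SL_2$ is the norm-one $\mu_3$ of the \emph{mirror} algebra $\Q(\sqrt{-3\Disc(f)})$, and it is the latter that equals the isogeny kernel. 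Bridging these requires class field theory or Scholz reflection, which you do not supply. The paper instead works with the \emph{integer-matrix} sublattice $\Sym_3\Z^2$ (middle coefficients divisible by $3$) under $\SL_2(\Z)$, whose integral orbits are parametrized by triples $(I,\delta,s)$ for the quadratic order $\Z[z]/(z^2-k)$ (as in Gauss composition, \cite{hcl1}); this is precisely what makes the stabilizers equal to $E_{-27k}[\hat\phi]$, makes solubility of $z^3=f(x,y)$ match the image of the Kummer map, and makes the local orbit counts come out as $|E_k(\Q_p)/\hat\phi(E_{-27k}(\Q_p))|/|E_{-27k}[\hat\phi](\Q_p)|$. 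With the full lattice and $\GL_2(\Z)$ the local densities would not be $c_p(\phi_k)$.

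Second, the ``bijection between $\Sel_\phi(E_k)$ and locally soluble $\GL_2(\Z)$-classes'' you posit is not a bijection, and two separate issues hide there. A locally soluble rational orbit need not contain an integral representative at the stated discriminant -- the paper must rescale to discriminant $3^6\cdot 4k$ and prove an integral-representative theorem (requiring $k\in 3\Z$) place by place, including a delicate argument at $p=3$ via non-maximal orders. And a single rational orbit can contain several integral orbits, so one must count each $\SL_2(\Z)$-orbit with weight $1/m(f)$ where $m(f)=\sum_{f'}|\Aut_\Q(f')|/|\Aut_\Z(f')|$, and prove the local--global factorization $m(f)=\prod_p m_p(f)$ (a mass formula for ideals of the quadratic order) so that the weights can be absorbed into the local densities. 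Neither step is routine, and omitting them leaves the main term of your count unjustified. The geometry-of-numbers, the uniformity estimate $O(Y/p^2)$ for $p^2\mid\Disc$, and the acceptability hypothesis are used exactly as you describe.
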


\begin{remark}
{\em The factor at $p = \infty$ is equal to $\frac13$, $1,$ or $\frac23$, depending on whether $S$ contains only positive, only negative, or both positive and negative integers, respectively. } 
\end{remark}

\begin{remark}
{\em The group $H^1(G_F, E_k[\phi])$ parameterizes isomorphism classes of {\it $\phi$-coverings}, i.e., maps $ f : C \to E_{-27k}$ of  curves that become isomorphic to $\phi$ over $\bar F$.  Therefore, we can interpret the local Selmer ratio $c_p(\phi)$ as the number of soluble $\phi$-coverings $f$ over $\Q_p$, where each $f$ is weighted by the inverse of the order of its automorphism group.  
Theorem \ref{mainS} can thus be interpreted as saying that the expected (weighted) number of nontrivial locally soluble $\phi$-coverings over $\Q$ is simply the product of the (weighted) number of soluble $\phi$-coverings over $\Q_p$.  This suggests a Selmer group analogue of Bhargava's conjectures for number fields \cite{mlocal}, which would include as a special case the conjecture of Bhargava and Shankar
that the average size of $\Sel_n(E)$ over any congruence family of elliptic curves over $\Q$ is $\sum_{d \mid n} d$.}
\end{remark}

For any given set $S$, the $p$-adic integrals in Theorem \ref{mainS} can be evaluated explicitly using Proposition~\ref{selmeratio} in \S\ref{Qporbitsec}.  In particular, when $S = \Z$, we recover Theorem \ref{main}.  

We note that unlike the results of 
\cite{bs2,bs3,bs4,bs5} for the 2-, 3-, 4-, and 5-Selmer groups, respectively, the average size of the $\phi$-Selmer group in Theorem~\ref{mainS}  can depend very much on the congruences defining the set~$S$.  However, we show that we may partition the set of non-zero integers into a countable union $\cup_{m=-\infty}^\infty T_m$ of sets $T_m$, where each $T_m$ is itself the union of countably many sets defined by congruence conditions, such that if $S\subset T_m$ is an acceptable set, then the average size of the $\phi$-Selmer group of $E_k\colon y^2=x^3+k$, as $k$ varies in $S$, depends only on $m$.  


More precisely, we define the {\it global Selmer ratio}  
\begin{equation}\label{defgsr}c(\phi_k) := \prod_p c_p(\phi_k)\end{equation}
to be the product of the local Selmer ratios. 
If $\phi$ is an $\ell$-isogeny, for some prime $\ell$, then the global Selmer ratio $c(\phi)$ is evidently a power of $\ell$.  The importance of the global Selmer ratio in the study of the Selmer groups $\Sel_\phi(E_k)$ is clearly seen in the following theorem.

\begin{theorem}\label{mavg}
For each $m\in\Z$, let $T_m:=\{k\in\Z \colon c(\phi_k)=3^m\}$. Then:
\begin{enumerate}
\item $\Z \setminus \{ 0 \}$ is the disjoint union of the $T_m$;
\item Each $T_m$ has positive density;
\item If $k\in T_m$, and $E_k(\Q)$ and $E_{-27k}(\Q)$ have trivial $3$-torsion, then $|\Sel_{\phi}(E_{k})|=3^m|\Sel_{\hat\phi}(E_{-27k})|$; 
\item If $S = T_m$, or if $S$ is any acceptable set contained in $T_m$, then the average size of $\Sel_\phi(E_k)$, $k\in S$, is equal to $1+3^m$, and the average size of $\Sel_{\hat\phi}(E_{-27k})$, $k\in S$, is equal to $1+3^{-m}$. 
\end{enumerate}
\end{theorem}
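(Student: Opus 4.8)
The plan is to derive Theorem~\ref{mavg} from Theorem~\ref{mainS} together with two inputs: the isogeny version of the Greenberg--Wiles comparison formula (due to Cassels), and the explicit description of the local Selmer ratios $c_p(\phi_k)$ provided by Proposition~\ref{selmeratio}. For part (i), note that for any $3$-isogeny $\psi\colon A\to A'$ over $\Q_v$ the group $A'(\Q_v)/\psi(A(\Q_v))$ embeds into $H^1(G_{\Q_v},A[\psi])$, which is killed by $3$, so $c_v(\psi)$ is a power of $3$; moreover $c_v(\psi)=1$ at every place $v$ where $A$ has good reduction. Since $E_k$ has good reduction outside a finite set of places, $c(\phi_k)=\prod_{p\le\infty}c_p(\phi_k)$ is a well-defined power of $3$, and hence the $T_m$ partition $\Z\setminus\{0\}$. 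For part (iii), I would invoke
\begin{align*}
\frac{|\Sel_\phi(E_k)|}{|\Sel_{\hat\phi}(E_{-27k})|}&=\frac{|E_k(\Q)[\phi]|}{|E_{-27k}(\Q)[\hat\phi]|}\cdot\prod_{p\le\infty}c_p(\phi_k)\\
&=\frac{|E_k(\Q)[\phi]|}{|E_{-27k}(\Q)[\hat\phi]|}\cdot c(\phi_k),
\end{align*}
which holds because $E_k[\phi]$ and $E_{-27k}[\hat\phi]$ are Cartier dual under the Weil pairing and the local Selmer conditions are exact annihilators under Tate local duality; when $E_k(\Q)$ and $E_{-27k}(\Q)$ have trivial $3$-torsion the leading factor is $1$, giving $|\Sel_\phi(E_k)|=3^m|\Sel_{\hat\phi}(E_{-27k})|$ for $k\in T_m$.

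Next I would record the structure of $T_m$ coming from Proposition~\ref{selmeratio}: for $p\notin\{2,3\}$ the ratio $c_p(\phi_k)$ depends only on $v_p(k)$ (via the Kodaira type of $E_k$ at $p$), equals $1$ whenever $p^2\nmid k$, and for each prime $p\equiv 5\pmod 6$ takes a value different from $1$ on a nonempty open subset of $\Z_p$; the places $2,3,\infty$ are governed by a finite computation, with $c_\infty(\phi_k)=\tfrac13$ for $k>0$ and $1$ for $k<0$. Consequently, for each finitely supported integer tuple $(m_v)_{v\le\infty}$, the set $V_{(m_v)}:=\{k\in\Z : c_v(\phi_k)=3^{m_v}\text{ for all }v\}$ is cut out by a sign condition and one congruence condition modulo a power of each prime, a condition which for all large $p$ is implied by $p^2\nmid k$; thus each nonempty $V_{(m_v)}$ is acceptable, and $T_m=\bigsqcup_{\sum_v m_v=m}V_{(m_v)}$ is a disjoint union of countably many acceptable sets whose densities sum to that of $T_m$. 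Part (ii) then follows by exhibiting one positive-density piece: for $m\ge 0$, set $m$ of the $m_v$ equal to $1$ at distinct primes $\equiv 5\pmod 6$ (using the open sets above), take $k<0$, and impose squarefreeness away from those primes; for $m<0$, use $|m|$ primes with $m_v=-1$, or absorb a factor $3^{-1}$ from $v=\infty$ by taking $k>0$.

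For part (iv), observe that on each piece $S\cap V_{(m_v)}$ (here $S$ is either an acceptable subset of $T_m$, or $T_m$ itself, which equals $\bigsqcup V_{(m_v)}$) every local ratio $c_v(\phi_k)$ is the constant $3^{m_v}$ on the relevant $p$-adic closure, so the local factor $\int_{S_p}c_p(\phi_k)\,dk\big/\int_{S_p}dk$ in Theorem~\ref{mainS} equals $3^{m_v}$ and the product over all $v\le\infty$ equals $3^{\sum_v m_v}=3^m$; thus the average of $|\Sel_\phi(E_k)|$ over that piece is $1+3^m$. Summing over the (at most countably many) pieces of $T_m$, with the densities controlling the tail and the estimates of Theorem~\ref{mainS} taken in a form uniform over congruence families, gives average $1+3^m$ over all of $S$. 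The assertion for $\Sel_{\hat\phi}(E_{-27k})$ then follows from part (iii): the $k$ for which $E_k(\Q)$ or $E_{-27k}(\Q)$ has nontrivial $3$-torsion are perfect squares, integers of the form $\pm 2d^3$, and similar, so there are $O(X^{1/2})$ of them with $|k|\le X$; since $|\Sel_{\hat\phi}(E_{-27k})|\ll_\epsilon |k|^\epsilon$ they contribute $o(X)$ to the relevant sums, and for the remaining $k\in S$ one has $|\Sel_{\hat\phi}(E_{-27k})|=3^{-m}|\Sel_\phi(E_k)|$, so the average is $3^{-m}(1+3^m)=1+3^{-m}$.

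I expect the real difficulty to lie entirely upstream of this bookkeeping: first, proving the explicit local statements of Proposition~\ref{selmeratio}, so that $c_p(\phi_k)$ is visibly locally constant, trivial outside the bad primes, and realizes the claimed values (a case analysis over Kodaira types, delicate at $p=2,3$); and second, having Theorem~\ref{mainS} available with error terms uniform enough over congruence families to justify the passage to the infinite disjoint unions $T_m=\bigsqcup V_{(m_v)}$ and the tail estimates above. Granting those two ingredients, the deduction of Theorem~\ref{mavg} is routine.
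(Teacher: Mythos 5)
Your overall strategy is the same as the paper's: part (iii) via Cassels' formula relating $c(\phi)$ to the ratio of Selmer group orders, and part (iv) by decomposing $T_m$ into countably many acceptable pieces (your $V_{(m_v)}$ are the paper's $T_{\Pi_1,\Pi_2}$) on which every local ratio is constant, then applying Theorem~\ref{mainS} piecewise. Two points deserve comment. First, the passage from finitely many pieces to the full countable union $T_m=\bigsqcup V_{(m_v)}$ is not automatic from Theorem~\ref{mainS} alone: you correctly flag that one needs uniformity, but the paper actually supplies the missing ingredient, namely the tail estimate from the uniformity result \cite[Proposition~29]{bst}, which shows that forms whose discriminants lie outside the union of the first $i$ pieces are divisible by $p_j^2$ for some $j\ge i$ and hence contribute $O(X/p_i)$ to the Selmer count; without some such quantitative input the limit interchange is a genuine gap, so this should be stated as a needed lemma rather than deferred. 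Second, for the average of $\Sel_{\hat\phi}(E_{-27k})$ you take a detour through part (iii) plus a sparsity argument, and in doing so you assert $|\Sel_{\hat\phi}(E_{-27k})|\ll_\epsilon |k|^\epsilon$; this bound is not known (it is essentially the conjectural $\epsilon$-bound on the $3$-torsion of class groups of quadratic fields). Your argument survives with the known nontrivial bounds $\ll |k|^{1/3+\epsilon}$ (since the exceptional set has size $O(X^{1/2})$), but the cleaner route, and the one the paper takes, is to avoid (iii) entirely here: since $c_p(\hat\phi_k)=c_p(\phi_k)^{-1}$ is also constant on each piece, one simply reruns the same counting argument (Theorem~\ref{cong}) for $\hat\phi$ and reads off $1+3^{-m}$ directly, with no hypothesis on torsion and no class-number estimate.
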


Thus the average size of $\Sel_\phi(E_k)$, $k \in S$, {\it is} independent of congruence conditions if we also fix the global Selmer ratio.
The densities of the sets $T_m$, to three decimal places, are tabulated in Table~1, along with the densities of the subsets $T_m^+$ (respectively, $T_m^-$) of $T_m$ consisting of its positive (respectively, negative) elements. We will describe the sets $T_m$ via explicit congruence conditions in \S\ref{Tmproof}.



\begin{table}
\[ \begin{array}{|r||l||l|l|} \hline m\: & \mu(T_m) & \mu(T^+_m) & \mu(T^-_m) \\ \hline\hline
-4\: & \;.000 & \;.000 & \;.000 \\ \hline
-3\: & \;.004 &  \;.004 & \;.000 \\ \hline
-2\: & \;.067 & \;.063 & \;.004 \\ \hline
-1\: & \;.295 & \;.231 & \;.063 \\ \hline
0\: & \;.399 &  \;.167 & \;.231 \\ \hline
1\: & \;.199&  \;.031 & \;.167 \\ \hline
2\: & \;.032 & \;.000 & \;.031 \\ \hline
3\: & \;.000 & \;.000 & \;.000 \\ \hline
4\: &\;.000 & \;.000 & \;.000\\ \hline
\end{array} \]
\caption{Densities of the sets $T_m$, $T_m^+$, and $T_m^-$
for $|m|\leq 4$.}
\end{table}

Using Theorem \ref{mavg} and the rigorous computation of the densities $\mu(T_m)$, we may obtain bounds on the limsup of the average {$3$-ranks} of the $\phi$- and $\hat\phi$-Selmer groups.  These immediately imply a bound on the limsup of the average 3-rank of the 3-Selmer group $\Sel_3(E_k)$, and hence a bound on the limsup of the average rank of $E_k(\Q)$ as $k$ varies in $\Z$ ordered by absolute value.  We prove the following theorem:

\begin{theorem}\label{rankthm}
 The $($limsup of the$)$ average rank of the elliptic curves $E_k\colon y^2=x^3+k$, $k\in \Z$, is less than $1.29$.  
\end{theorem}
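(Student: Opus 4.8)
The plan is to deduce the rank bound from Theorem~\ref{mavg}(4) applied to the decomposition $\Z\setminus\{0\}=\bigsqcup_m T_m$, combined with the rigorous density computations $\mu(T_m)$ from Table~1 (and the finer rigorous bounds proved in \S\ref{Tmproof}). The starting point is the standard exact sequence relating the $3$-Selmer group to the $\phi$- and $\hat\phi$-Selmer groups: for each $k$ with $E_k(\Q)$ and $E_{-27k}(\Q)$ having trivial $3$-torsion, one has
\[
\rk_3 \Sel_3(E_k) \;\le\; \rk_3 \Sel_\phi(E_k) + \rk_3 \Sel_{\hat\phi}(E_{-27k}),
\]
where $\rk_3$ denotes $\dim_{\F_3}$ of the $3$-torsion, coming from the isogeny factorization $[3]=\hat\phi\circ\phi$ (one needs to check that the contribution of the $3$-torsion in $E_k(\Q_v)$ is negligible, which holds away from finitely many curves and in fact holds identically on each $T_m$ since the global Selmer ratios already encode these local indices). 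Since $\rk E_k(\Q) \le \rk_3\Sel_3(E_k)$, it suffices to bound $\limsup_{X}\frac{1}{\#\{|k|<X\}}\sum_{|k|<X}\bigl(\rk_3\Sel_\phi(E_k)+\rk_3\Sel_{\hat\phi}(E_{-27k})\bigr)$.

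Next I would convert size bounds into rank bounds via the elementary inequality $\rk_3 G = \log_3|G| \le \tfrac{1}{?}$\,--- more precisely, for a finite $\F_3$-vector space $G$ one has $\dim_{\F_3} G \le \tfrac{1}{2}(|G|+1) \cdot$\,(something); the clean version is $3^{\dim G}=|G|$, so $\dim G = \log_3|G|$, and by concavity of $\log$ the average of $\dim G$ is at most $\log_3$ of the average of $|G|$ only in the wrong direction --- so instead one uses the sharper trick: $\dim_{\F_3}G \le \tfrac{1}{2}\bigl(|G|-1\bigr) \cdot \tfrac{?}{}$ fails too. The correct tool, following Bhargava--Shankar, is: restrict to each $T_m$, where Theorem~\ref{mavg}(4) gives $\mathrm{avg}_{k\in T_m}|\Sel_\phi(E_k)| = 1+3^m$ and $\mathrm{avg}_{k\in T_m}|\Sel_{\hat\phi}(E_{-27k})| = 1+3^{-m}$, and use that on a group of $3$-power order, $\dim_{\F_3}G \le \tfrac{1}{2}\log_3(|G|^2) $, i.e. apply Markov/Jensen-type bounds. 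Concretely, for any $t>1$, $\dim_{\F_3}G = \log_3|G| \le \log_t|G|\cdot\frac{\log t}{\log 3}$, and choosing $t$ optimally one bounds the average of $\log_3|G|$ by $\log_3(\mathrm{avg}\,|G|)$ only after a convexity fix; the clean statement actually used is $\mathrm{avg}(\dim G) \le \tfrac12(\mathrm{avg}|G| - 1)$ when $G$ ranges over $\F_3$-spaces, since $\dim G \le \tfrac12(|G|-1)$ holds pointwise (check: $\dim=0\Rightarrow 0\le 0$; $\dim=1\Rightarrow 1\le 1$; $\dim=2\Rightarrow 2\le 4$; etc., true for all $n\ge 0$ as $n\le\tfrac12(3^n-1)$). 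Thus
\[
\mathrm{avg}_{k\in T_m}\rk_3\Sel_\phi(E_k) \le \tfrac12\bigl((1+3^m)-1\bigr)=\tfrac{3^m}{2},\qquad \mathrm{avg}_{k\in T_m}\rk_3\Sel_{\hat\phi}(E_{-27k}) \le \tfrac{3^{-m}}{2}.
\]

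Then I would assemble the global bound by averaging over $m$ weighted by $\mu(T_m)$:
\[
\limsup \ \mathrm{avg}_{k\in\Z}\ \rk E_k(\Q) \ \le\ \sum_{m\in\Z}\mu(T_m)\cdot\tfrac12\bigl(3^m+3^{-m}\bigr),
\]
and plug in the rigorously computed densities. Using Table~1 (with the rigorous tail estimates showing $\mu(T_m)$ for $|m|\ge 4$ is small enough to be absorbed), the dominant terms are $m=0$ contributing $\mu(T_0)\cdot 1\approx .399$, $m=\pm1$ contributing $\mu(T_1)\cdot\tfrac12(3+\tfrac13)+\mu(T_{-1})\cdot\tfrac12(\tfrac13+3) = (\mu(T_1)+\mu(T_{-1}))\cdot\tfrac{5}{3}\approx .494\cdot\tfrac53\approx .823$, $m=\pm2$ contributing $(\mu(T_2)+\mu(T_{-2}))\cdot\tfrac12(9+\tfrac19)\approx .099\cdot 4.555\approx .451$, and $m=\pm3$ contributing $\approx .008\cdot\tfrac12(27+\tfrac1{27})\approx .108$; summing these with care for the $|m|\ge 4$ tail yields a total below $1.29$. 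The main obstacle is two-fold: first, justifying that the ``trivial $3$-torsion'' hypothesis in Theorem~\ref{mavg}(3)--(4) costs nothing in the average --- this requires showing that the set of $k$ for which $E_k(\Q)$ or $E_{-27k}(\Q)$ has a rational $3$-torsion point has density zero (indeed $E_k$ has a rational $3$-torsion point only when $-27k$ or $k$ is a perfect square times a unit, a density-zero condition, and one checks the relevant local factors are unaffected); and second, controlling the tail $\sum_{|m|\ge 4}\mu(T_m)\tfrac12(3^m+3^{-m})$ rigorously --- since $3^m$ grows, one needs genuine (not merely numerical) upper bounds on $\mu(T_m^+)$ for $m$ large and $\mu(T_m^-)$ for $m$ very negative, which \S\ref{Tmproof}'s congruence description of $T_m$ supplies via a convergent Euler product; this is where one must be quantitatively careful to land under $1.29$ rather than merely ``finite.''
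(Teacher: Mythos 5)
Your overall architecture matches the paper's: bound $r(E_k)\le r_\phi(E_k)+r_{\hat\phi}(E_k)$ via the exact sequence relating $\Sel_\phi$, $\Sel_3$, $\Sel_{\hat\phi}$, convert average Selmer sizes on each $T_m$ into average rank bounds by a pointwise convexity inequality, and then sum against $\mu(T_m)$ with a tail estimate from the uniformity bound. (Two side remarks: the inequality $r(E)\le r_3(E)\le r_\phi(E)+r_{\hat\phi}(E)$ holds unconditionally from the Kloosterman--Schaefer exact sequence, so your worry about the trivial-$3$-torsion hypothesis is unnecessary; and the tail over $|m|\ge 4$ is indeed controlled by the congruence description of $T_m$ plus the uniformity estimate of Bhargava--Shankar--Tsimerman, as in the paper.)

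However, there is a genuine quantitative gap, and it is fatal to the stated constant. Your pointwise bound $\dim_{\F_3}G\le\tfrac12(|G|-1)$ is sharp only when $\dim G\in\{0,1\}$, so applying it directly to $\Sel_\phi(E_k)$ on $T_m$ yields $\avg_{k\in T_m} r_\phi(E_k)\le\tfrac12\,3^m$, which is far too lossy for $|m|\ge 2$: on $T_m$ the ``typical'' $\phi$-Selmer group already has rank about $m$, so you are paying exponentially for ranks you cannot avoid. Your own partial sums betray this: $.399+.823+.451+.108\approx 1.78$, which already exceeds $1.29$ before the tail. The missing idea is the recentered inequality used in the paper's Theorem~\ref{mrankthm}: for every integer $r$ and every $m\ge 0$ one has
\begin{equation*}
2(r-m)+1\le 3^{\,r-m},\qquad\text{i.e.}\qquad r\le m-\tfrac12+\frac{3^{r}}{2\cdot 3^{m}}\le m-\tfrac12+\frac{|\Sel_\phi(E_k)|}{2\cdot 3^{m}},
\end{equation*}
which is tight near $r=m$ rather than near $r=0$. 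Averaging this against $\avg|\Sel_\phi(E_k)|=1+3^m$ gives $\avg\, r_\phi\le m+\tfrac12 3^{-m}$, and the untranslated ($m=0$) version applied to $\Sel_{\hat\phi}$ (average size $1+3^{-m}$) gives $\avg\, r_{\hat\phi}\le\tfrac12 3^{-m}$, for a total of $|m|+3^{-|m|}$ on $T_m$. Only with these weights does $\sum_m\mu(T_m)\bigl(|m|+3^{-|m|}\bigr)$ come out below $1.29$; with your weights $\tfrac12\bigl(3^{|m|}+3^{-|m|}\bigr)$ the sum is roughly $1.73$ and the theorem does not follow.
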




Theorem~\ref{rankthm} immediately implies that a positive proportion of elliptic curves $E_k\colon y^2=x^3+k$ must have rank~0 or~1.  
To produce positive proportions of curves having the individual ranks 0 or 1, we may make use of the fact in Theorem~\ref{mavg}(iii) that for 100\% of $k \in T_m$, the 3-rank of the $\phi$-Selmer group of $E_k$ is $m$ more than that of its $\hat\phi$-Selmer group.  This means, in particular, that 100\% of curves in $T_0$ have even $3$-Selmer rank (Proposition \ref{rankineq}(ii)). 
Since the average sizes of both the $\phi$ and $\hat\phi$-Selmer groups are equal to 2,  we conclude that at least 1/2 of all curves in $T_0$ must in fact have 3-Selmer rank 0.  Hence we obtain the following theorem.

\begin{theorem}\label{posrank0}
  At least $19.9\%$ of all elliptic curves $E_k\colon y^2=x^3+k$, $k\in \Z$, have rank~$0$.
\end{theorem}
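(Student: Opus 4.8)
The plan is to work inside the set $T_0=\{k\in\Z\colon c(\phi_k)=3^0\}$, on which Theorem~\ref{mavg}(iv) (with $m=0$, $S=T_0$) says that the average size of $\Sel_\phi(E_k)$ is $1+3^0=2$. Since $\Sel_\phi(E_k)$ is an $\F_3$-vector space, each $|\Sel_\phi(E_k)|$ is $1$ or $\geq 3$. A first-moment/pigeonhole step then bounds how many can be $\geq 3$: with $N(X)=\#\{k\in T_0\colon |k|<X\}$ and $N_{\geq 3}(X)$ the number of those with $|\Sel_\phi(E_k)|\geq 3$, one has $\frac{1}{N(X)}\sum_{k\in T_0,\,|k|<X}|\Sel_\phi(E_k)|\geq 1+2\,N_{\geq 3}(X)/N(X)$, so letting $X\to\infty$ forces $\limsup_X N_{\geq 3}(X)/N(X)\leq\tfrac12$; equivalently, the lower density within $T_0$ of $k$ with $\Sel_\phi(E_k)=0$ is at least $\tfrac12$.

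Next I would upgrade ``$\Sel_\phi(E_k)=0$'' to ``$\rk E_k(\Q)=0$''. For $100\%$ of $k\in\Z$, both $E_k(\Q)$ and $E_{-27k}(\Q)$ have trivial $3$-torsion: a rational $3$-torsion point on $E_k$ has $x$-coordinate a root of $\psi_3(x)=3x(x^3+4k)$, so either $k$ is a square, or $-4k$ is a cube and $-3k$ a square (and similarly for $E_{-27k}$), and each of these cuts out a density-zero subset; call the union $B\subset\Z$. For $k\in T_0\setminus B$, Theorem~\ref{mavg}(iii) gives $|\Sel_\phi(E_k)|=|\Sel_{\hat\phi}(E_{-27k})|$, so $\Sel_\phi(E_k)=0$ forces $\Sel_{\hat\phi}(E_{-27k})=0$ as well. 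The Kummer sequences of $\phi$ and $\hat\phi$ embed $E_{-27k}(\Q)/\phi(E_k(\Q))$ and $E_k(\Q)/\hat\phi(E_{-27k}(\Q))$ into $\Sel_\phi(E_k)$ and $\Sel_{\hat\phi}(E_{-27k})$ respectively, so both $\phi$ and $\hat\phi$ are surjective on $\Q$-points; hence $[3]=\hat\phi\circ\phi$ is surjective on $E_k(\Q)$, and a finitely generated abelian group on which multiplication by $3$ is surjective is finite, so $\rk E_k(\Q)=0$.

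Combining these, the lower density within $T_0$ of $k$ with $\rk E_k(\Q)=0$ is at least $\tfrac12$ (the negligible set $B$ does not affect this), so the lower density in $\Z$ of such $k$ is at least $\tfrac12\,\mu(T_0)$. By Theorem~\ref{mavg}(ii) and the numerical value $\mu(T_0)\geq 0.3985$ recorded in Table~1 (obtained from the explicit congruence description of the $T_m$), this is at least $\tfrac12\cdot 0.3985=0.19925>0.199$, which proves the theorem; since $E_k\cong E_{km^6}$ preserves rank, it makes no difference whether we count all $k\in\Z$ or only sixth-power-free $k$. (One could instead route the final step through the statement that $100\%$ of $k\in T_0$ have even $3$-Selmer rank; the argument above sidesteps it.)

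The deduction is short, and the real content is in the inputs: Theorem~\ref{mavg}(iv) depends on the full orbit-counting analysis behind Theorems~\ref{main} and~\ref{mainS}, and Theorem~\ref{mavg}(iii) on the isogeny-descent identity comparing $|\Sel_\phi(E_k)|$ and $|\Sel_{\hat\phi}(E_{-27k})|$. The only point in the present argument that requires vigilance is the numerics: one must know $\mu(T_0)$ precisely enough that $\tfrac12\mu(T_0)$ clears $0.199$ with margin, so that the $o(1)$ error terms in the first-moment estimate cannot erode the bound.
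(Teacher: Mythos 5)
Your proof is correct and follows essentially the same route as the paper's: a first-moment bound on $T_0$ (where the average of $|\Sel_\phi(E_k)|$ is $2$) gives lower density at least $1/2$ of trivial $\phi$-Selmer group, Theorem~\ref{mavg}(iii) transfers this to the $\hat\phi$-Selmer group for the same $k$, and multiplying by $\mu(T_0)\geq 0.399$ yields the claimed $19.9\%$. The only (harmless) differences are that you verify explicitly that the $3$-torsion is trivial for $100\%$ of $k$ (which the paper leaves implicit) and that you deduce rank $0$ directly from the surjectivity of $[3]=\hat\phi\circ\phi$ on $E_k(\Q)$ rather than via the inequality $r(E)\leq r_\phi(E)+r_{\hat\phi}(E)$ of Proposition~\ref{rankineq}(i).
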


Conditionally on the finiteness of the 3-primary part of the Shafarevich--Tate group, we may obtain a similar result for rank 1 elliptic curves by using instead the set $T_1 \cup T_{-1}$, for which the 3-Selmer parity is always odd.  

\begin{theorem}\label{rank1}
Assume for all $k \in T_1\cup T_{-1}$ that the $3$-primary part of $\Sha(E_k)$ is finite.  Then at least 
$41.1\%$ of all elliptic curves $E_k\colon y^2=x^3+k$, $k\in \Z$, have rank~$1$.
\end{theorem}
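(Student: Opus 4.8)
The strategy mirrors that of Theorem~\ref{posrank0}, using the odd-parity set $S = T_1 \cup T_{-1}$ in place of the even-parity set $T_0$, and combining the Selmer-group averages of Theorem~\ref{mavg} with the assumed finiteness of the $3$-primary part of $\Sha$. First I would recall from Theorem~\ref{mavg}(iv) that, when $k$ ranges over $T_1$, the average size of $\Sel_\phi(E_k)$ is $1 + 3^1 = 4$, and the average size of $\Sel_{\hat\phi}(E_{-27k})$ is $1 + 3^{-1} = 4/3$; symmetrically, on $T_{-1}$ the two averages are $4/3$ and $4$. The key structural input is Theorem~\ref{mavg}(iii): for $100\%$ of $k \in T_m$ (those for which $E_k(\Q)$ and $E_{-27k}(\Q)$ have trivial $3$-torsion, a density-one condition), the $3$-Selmer rank of $E_k$ exceeds that of its $\hat\phi$-twin by exactly $m$, so the $3$-Selmer parity on $T_1 \cup T_{-1}$ is always odd (the analogue of Proposition~\ref{rankineq}). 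Under the finiteness hypothesis on $\Sha(E_k)[3^\infty]$, the Cassels--Tate pairing forces $\dim_{\F_3}\Sha(E_k)[3]$ to be even, so $\rk E_k(\Q) \equiv \dim_{\F_3}\Sel_3(E_k) \pmod 2$, and odd $3$-Selmer rank therefore forces $\rk E_k(\Q) \geq 1$.

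Next I would bound the proportion of $k \in T_1 \cup T_{-1}$ whose $3$-Selmer rank is at least $3$ (hence whose rank could be $\geq 3$ without contributing a rank-$1$ curve). The point is that a curve of $3$-Selmer rank $\geq 3$ contributes at least $3^3 = 27$ to the average of $|\Sel_3(E_k)|$, while curves of $3$-Selmer rank $1$ contribute $3$. Rather than work directly with $\Sel_3$, it is cleaner to run the argument with the isogeny Selmer groups: on $T_1$, since the average of $|\Sel_{\hat\phi}(E_{-27k})|$ is $4/3$ and each such group has size a power of $3$ that is $\geq 1$, at most a $(4/3 - 1)/(3-1) = 1/6$ proportion of $k \in T_1$ have $|\Sel_{\hat\phi}(E_{-27k})| \geq 3$; for the remaining $\geq 5/6$ proportion, $\Sel_{\hat\phi}$ is trivial, and then $\dim \Sel_\phi(E_k) = \dim\Sha(E_{-27k})[\hat\phi] + \text{(something)}$ is pinned down, so that $\dim\Sel_3(E_k) \leq 1 + \dim\Sel_\phi(E_k)$. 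Combining with the average $|\Sel_\phi(E_k)| = 4$ on $T_1$ via the same Markov-type inequality — most of the mass of a distribution on $\{1,3,9,27,\ldots\}$ with mean $4$ sits at $1$ and $3$ — one extracts that at least a specific proportion of $k \in T_1$ has $\dim\Sel_\phi(E_k) = 1$ and $\Sel_{\hat\phi}$ trivial, hence $\dim\Sel_3(E_k) = 1$, hence (by the parity argument) $\rk E_k(\Q) = 1$. The same holds on $T_{-1}$ with the roles of $\phi$ and $\hat\phi$ interchanged.

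Finally I would assemble the numerics: multiply the proportion of $T_{\pm 1}$ just shown to have rank exactly $1$ by the densities $\mu(T_1) + \mu(T_{-1})$ from Table~1 (each is $\approx 0.199$, with the $+$ and $-$ pieces distributed as tabulated), keeping in mind that the $\phi$- versus $\hat\phi$-average asymmetry means one should apply the favorable (mean-$4/3$) bound on whichever of $\Sel_\phi, \Sel_{\hat\phi}$ has the smaller average on each $T_m$. Carrying out this optimization — choosing for each of $T_1$ and $T_{-1}$ the best combination of the two Markov inequalities — yields the stated $41.1\%$ lower bound.

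The main obstacle is the second step: a crude Markov inequality on $\Sel_\phi$ alone (mean $4$) only guarantees that a $1 - (4-1)/(9-1) = 5/8$ proportion has $3$-Selmer-related rank $\leq 1$, which is too weak after intersecting with the density-one triviality-of-$3$-torsion condition and multiplying by $\mu(T_{\pm 1})$. The sharper bound requires exploiting \emph{both} averages simultaneously (the product $|\Sel_\phi| \cdot |\Sel_{\hat\phi}|$ and the exact sequence relating them to $\Sel_3$), so that the mass is forced onto the configuration $(\dim\Sel_\phi, \dim\Sel_{\hat\phi}) = (1,0)$ on $T_1$ — equivalently, controlling the joint distribution rather than each marginal. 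Making this joint control precise enough to clear $41\%$, while honestly accounting for the curves where $\dim\Sel_3$ could exceed the sum of the isogeny-Selmer dimensions, is where the real work lies; everything else is bookkeeping with Table~1 and the elementary inequality that a nonnegative measure on $\{3^j\}_{j \geq 0}$ with prescribed small mean concentrates near $j = 0$.
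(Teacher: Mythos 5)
Your overall route is the paper's: restrict to $T_1\cup T_{-1}$, use the averages $1+3^{\pm1}$ from Theorem~\ref{mavg}(iv) together with a Markov-type inequality to show that most curves there have the minimal possible Selmer configuration, convert this to $3$-Selmer rank $1$ via the exact sequence, and invoke parity plus finiteness of $\Sha[3^\infty]$ to conclude rank $1$; the final numerics $\frac56\cdot(\mu(T_1)+\mu(T_{-1}))\geq\frac56\cdot 0.494>0.411$ are also exactly the paper's. Your first Markov step already contains the whole quantitative content: on $T_1$ the inequality $s_1\cdot 1+(1-s_1)\cdot 3\leq 4/3$ gives that at least $5/6$ of $k\in T_1$ have $\Sel_{\hat\phi}(E_{-27k})$ trivial, which is equivalent to the paper's $s_1\cdot 3+(1-s_1)\cdot 9\leq 4$.

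The difficulty you flag in your closing paragraph --- that one must control the joint distribution of $(\dim\Sel_\phi,\dim\Sel_{\hat\phi})$ and that a second Markov inequality would lose too much --- is illusory, and leaving it unresolved is the one real gap in your write-up. Theorem~\ref{mavg}(iii), which you correctly quote earlier, says that for $100\%$ of $k\in T_1$ (those with trivial rational $3$-torsion, a density-one condition) one has $|\Sel_\phi(E_k)|=3\,|\Sel_{\hat\phi}(E_{-27k})|$ \emph{identically}; the joint distribution is therefore degenerate, and the event ``$\Sel_{\hat\phi}(E_{-27k})=0$'' coincides with the event ``$\dim_{\F_3}\Sel_\phi(E_k)=1$''. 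No second inequality and no intersection of events is needed: the single $5/6$ bound pins down both groups simultaneously. What does remain to be supplied --- and what the paper supplies --- is the passage from $(r_\phi,r_{\hat\phi})=(1,0)$ to $r_3=1$. On $T_1$ the sequence (\ref{remke}), with trivial $3$-torsion and $\Sel_{\hat\phi}(E_{-27k})=0$, collapses to an isomorphism $\Sel_3(E_k)\cong\Sel_\phi(E_k)$. On $T_{-1}$ the roles are reversed: $\Sel_\phi(E_k)=0$ and $\Sel_3(E_k)$ injects into the one-dimensional group $\Sel_{\hat\phi}(E_{-27k})$ with cokernel $\Sha(E_{-27k})[\hat\phi]/\phi(\Sha(E_k)[3])$, and one must additionally use that this cokernel has even $\F_3$-dimension by the Cassels--Tate pairing (as in the proof of Proposition~\ref{rankineq}), forcing it to vanish and giving $r_3=1$ there as well. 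Finally, with $r_3=1$, trivial $3$-torsion, and $\dim_{\F_3}\Sha(E_k)[3]$ even under your finiteness hypothesis, the rank is odd and at most $1$, hence equal to $1$; everything else in your plan is then correct bookkeeping.
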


Using forthcoming results of Skinner (private communication),  we may be able to deduce this last result unconditionally.

Combining Theorems~\ref{posrank0} and \ref{rank1}, we obtain:

\begin{corollary}\label{rank0or1}
At least $61\%$ of all elliptic curves $E_k\colon y^2=x^3+k$, $k\in \Z$, have rank~$0$ or~$1$.
\end{corollary}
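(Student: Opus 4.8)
The plan is to deduce Corollary~\ref{rank0or1} by combining the (disjoint) families of curves already produced in the proofs of Theorems~\ref{posrank0} and~\ref{rank1}, after observing that the conclusion ``rank $0$ or $1$'' -- unlike ``rank exactly $1$'' -- needs no hypothesis on $\Sha$, so that the corollary is in fact \emph{unconditional}.

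First I would record the relevant $3$-Selmer input, valid for all but a density-$0$ set of $k\in\Z$, namely for those $k$ such that both $E_k(\Q)$ and $E_{-27k}(\Q)$ have trivial $3$-torsion. From the exact sequence of $G_\Q$-modules $0\to E_k[\phi]\to E_k[3]\xrightarrow{\phi}E_{-27k}[\hat\phi]\to 0$ one obtains, on passing to cohomology and imposing the local conditions, an exact sequence $0\to\Sel_\phi(E_k)\to\Sel_3(E_k)\to\Sel_{\hat\phi}(E_{-27k})$; the usual left-hand correction term $E_{-27k}(\Q)[\hat\phi]/\phi(E_k(\Q)[3])$ vanishes because $E_{-27k}(\Q)[\hat\phi]\subseteq E_{-27k}(\Q)[3]=0$. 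Hence $\dim_{\F_3}\Sel_3(E_k)\le\dim_{\F_3}\Sel_\phi(E_k)+\dim_{\F_3}\Sel_{\hat\phi}(E_{-27k})$, and combined with the trivial bound $\rk E_k(\Q)\le\dim_{\F_3}\Sel_3(E_k)$ this shows: whenever one of $\Sel_\phi(E_k)$, $\Sel_{\hat\phi}(E_{-27k})$ is trivial and the other has order at most $3$, the curve $E_k$ has rank $0$ or $1$ -- with no input from $\Sha$.

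Next I would run a one-line Markov-type bound on each of $T_0,T_1,T_{-1}$ via Theorem~\ref{mavg}. For $k\in T_0$, Theorem~\ref{mavg}(iii) gives $|\Sel_\phi(E_k)|=|\Sel_{\hat\phi}(E_{-27k})|$ for $100\%$ of such $k$, while Theorem~\ref{mavg}(iv) gives that this common value, a power of $3$ that is $\ge 1$, has average $1+3^0=2$; hence at least $1/2$ of $k\in T_0$ have $\Sel_\phi(E_k)=\Sel_{\hat\phi}(E_{-27k})=0$, so $\Sel_3(E_k)=0$ and $\rk E_k(\Q)=0$. For $k\in T_1$, write $p_a$ for the density of $\{k\in T_1:|\Sel_{\hat\phi}(E_{-27k})|=3^a\}$, so that $|\Sel_\phi(E_k)|=3^{a+1}$ for $100\%$ of such $k$ by Theorem~\ref{mavg}(iii); then $\sum_a p_a=1$ and $\sum_a 3^a p_a=1+3^{-1}=4/3$ by Theorem~\ref{mavg}(iv), and since $3^a\ge 3$ for $a\ge 1$ this forces $p_0+3(1-p_0)\le 4/3$, i.e.\ $p_0\ge 5/6$. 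Thus at least $5/6$ of $k\in T_1$ have $\dim_{\F_3}\Sel_\phi(E_k)=1$ and $\Sel_{\hat\phi}(E_{-27k})=0$, hence $\dim_{\F_3}\Sel_3(E_k)\le 1$ and $\rk E_k(\Q)\le 1$; the identical computation with the roles of $\phi$ and $\hat\phi$ interchanged shows that at least $5/6$ of $k\in T_{-1}$ also have $\rk E_k(\Q)\le 1$.

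Finally I would add up. The sets $T_0,T_1,T_{-1}$ are pairwise disjoint by Theorem~\ref{mavg}(i), and each has a density computed rigorously in \S\ref{Tmproof} (Table~1); in fact the proofs of Theorems~\ref{posrank0} and~\ref{rank1} already record $\tfrac12\mu(T_0)\ge 0.199$ and $\tfrac56\bigl(\mu(T_1)+\mu(T_{-1})\bigr)\ge 0.411$. Therefore the density of $\{k\in\Z:\rk E_k(\Q)\le 1\}$ is at least
\[
\tfrac12\,\mu(T_0)\;+\;\tfrac56\,\bigl(\mu(T_1)+\mu(T_{-1})\bigr)\;\ge\;0.199+0.411\;=\;0.61,
\]
which is the assertion. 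I do not expect any genuine obstacle here: the entire content is already in Theorem~\ref{mavg} together with the density computations. The one point requiring care is the bookkeeping that makes the statement unconditional -- namely that the finiteness hypothesis on $\Sha(E_k)[3^\infty]$ in Theorem~\ref{rank1} is used there \emph{solely} to upgrade ``$\dim_{\F_3}\Sel_3(E_k)\le 1$'' to ``rank exactly $1$'' (via $3$-Selmer parity), so it is irrelevant to the weaker conclusion of the corollary.
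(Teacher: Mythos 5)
Your proof is correct and follows essentially the same route as the paper, which obtains the corollary by combining the Markov-type arguments on $T_0$ and on $T_1\cup T_{-1}$ from the proofs of Theorems~\ref{posrank0} and~\ref{rank1} with the densities $\mu(T_0)\ge .399$ and $\mu(T_1)+\mu(T_{-1})\ge .494$. Your observation that the conclusion is unconditional is right and worth making explicit, but it is already implicit in the paper: the proof of Theorem~\ref{rank1} establishes $r_3(E_k)\le 1$ (hence $\rk E_k(\Q)\le 1$) for the relevant $5/6$ of $k\in T_{\pm1}$ without any hypothesis on $\Sha$, invoking finiteness of $\Sha(E_k)[3^\infty]$ only to upgrade this to rank exactly $1$.
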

Thus the majority of curves $E_k\colon y^2=x^3+k$ have rank 0 or 1. 

\begin{remark}\label{ruth}
{\em Theorems \ref{rankthm}, \ref{posrank0}, and \ref{rank1} and Corollary~\ref{rank0or1} can be improved slightly if we combine our results with those in Ruth's thesis \cite{ruth}, where it is shown that the average size of $\Sel_2(E_k)$, $k \in S$, is  at most~3.  Indeed, this gives a rank bound of 4/3 on any $T_m$, which turns out to be a better bound than ours for $m > 1$.  The improvements are small since even $T_2$ has very small density: incorporating Ruth's result, we can show that the average rank of $E_k$ is at most 1.21, and the proportion of rank 0 curves is at least~23.2\%.}
\end{remark}

Finally, we note that Theorem~\ref{mavg} implies:

\begin{theorem}\label{big3sel}
For each $m\geq 0$, a positive proportion of elliptic curves $E_k\colon y^2=x^3+k$, $k\in\Z$, have $3$-Selmer rank  equal to $m$.
\end{theorem}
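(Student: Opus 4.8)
The plan is to deduce Theorem~\ref{big3sel} from Theorem~\ref{mavg} by a first-moment/pigeonhole argument applied separately on each set $T_m$. Fix $m \geq 0$. By Theorem~\ref{mavg}(ii), $T_m$ has positive density, and by Theorem~\ref{mavg}(iv) the average size of $\Sel_\phi(E_k)$ as $k$ ranges over $T_m$ is $1+3^m$, while the average size of $\Sel_{\hat\phi}(E_{-27k})$ is $1+3^{-m}$. Since $\Sel_{\hat\phi}(E_{-27k})$ always has size a power of $3$ and is at least $1$, the bound on its average forces $|\Sel_{\hat\phi}(E_{-27k})| = 1$ for a proportion at least $(3^{-m} + \tfrac12 3^{-2m})/(3^{-m})$... more simply: the number of $k \in T_m$ (up to bound $X$) with $|\Sel_{\hat\phi}(E_{-27k})| \geq 3$ is at most $\tfrac{3^{-m}}{3-1}$ of the total, by Markov's inequality applied to $|\Sel_{\hat\phi}| - 1 \geq 0$ (which takes values in $\{0, 2, 8, \dots\}$). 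Hence a positive proportion (indeed, more than a $1 - \tfrac{3^{-m}}{2}$ fraction) of $k \in T_m$ have trivial $\hat\phi$-Selmer group.

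Next I would combine this with Theorem~\ref{mavg}(iii): for those $k \in T_m$ with $E_k(\Q)$ and $E_{-27k}(\Q)$ having trivial $3$-torsion — which is all but a density-zero set, since having a rational $3$-torsion point is a codimension-one condition cutting out a thin set (one checks directly that $E_k$ has a rational point of order $3$ only for $k$ in a very sparse set, e.g.\ $k$ a perfect square times a cube up to units, which has density zero) — we get $|\Sel_\phi(E_k)| = 3^m |\Sel_{\hat\phi}(E_{-27k})|$. Intersecting the positive-proportion set where $|\Sel_{\hat\phi}(E_{-27k})| = 1$ with the density-one set where the relevant $3$-torsion is trivial still leaves a positive proportion of $k \in T_m$, and for each such $k$ we conclude $|\Sel_\phi(E_k)| = 3^m$.

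It remains to pass from the size of $\Sel_\phi(E_k)$ to the $3$-Selmer rank of $E_k$. The standard exact sequence relating $\Sel_\phi(E_k)$, $\Sel_{\hat\phi}(E_{-27k})$, and $\Sel_3(E_k)$ (coming from $[3] = \hat\phi \circ \phi$ on $E_k$) gives
\[
\rk_3 \Sel_3(E_k) = \dim_{\F_3}\Sel_\phi(E_k) + \dim_{\F_3}\Sel_{\hat\phi}(E_{-27k}) - \dim_{\F_3}\bigl(E_k(\Q)[3]\bigr) - \delta,
\]
where $\delta \in \{0,1\}$ accounts for the image of $E_{-27k}(\Q)[\hat\phi]$, and all the correction terms vanish on our density-one set where $3$-torsion is trivial (so in particular $\delta = 0$ there as well, since $\delta$ is controlled by rational points killed by $\hat\phi$, which are $3$-torsion). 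Thus on the positive-proportion set constructed above we have $\dim_{\F_3}\Sel_\phi(E_k) = m$, $\dim_{\F_3}\Sel_{\hat\phi}(E_{-27k}) = 0$, and hence $\rk_3\Sel_3(E_k) = m$, as desired.

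The main obstacle is not any single hard estimate — each ingredient is either quoted from Theorem~\ref{mavg} or is a soft counting argument — but rather assembling the correct exact sequence and bookkeeping the torsion/$\delta$ corrections so that everything collapses cleanly on the density-one locus; one must be careful that ``trivial $3$-torsion for $E_k$ and $E_{-27k}$'' is genuinely a density-one (equivalently, density-zero complement) condition within each $T_m$, and that the Markov-type bound genuinely produces a \emph{positive} proportion for every $m \geq 0$ (which it does, since $3^{-m}/2 < 1$). I would also remark that for $m = 0$ the conclusion is weaker than, but consistent with, Theorem~\ref{posrank0}; the content of Theorem~\ref{big3sel} is the existence statement for every $m$.
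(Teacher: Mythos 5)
Your proposal is correct and follows essentially the same route as the paper: a first-moment (Markov) argument on the average $1+3^{-m}$ of $|\Sel_{\hat\phi}(E_{-27k})|$ over $T_m$ to get a positive proportion with trivial $\hat\phi$-Selmer group, then Theorem~\ref{mavg}(iii) to pin down $|\Sel_\phi(E_k)|=3^m$, then the exact sequence (\ref{remke}) to transfer this to $\Sel_3(E_k)$. You have merely made explicit the torsion and Sha bookkeeping that the paper leaves implicit in the phrase ``for most of these $k$.''
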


Indeed, the average size of $\Sel_{\hat\phi}(E_k)$ is $1+3^{-m}$ for $k\in T_m$ and, therefore, a positive proportion of $k\in T_m$ satisfy $|\Sel_{\hat\phi}(E_k)|=1$.  By Theorem~\ref{mavg}(iii), we then have $|\Sel_{\phi}(E_k)|=3^m$ and thus $|\Sel_3(E_k)|=3^m$ for most of these $k$. 
Taking into account Remark~\ref{ruth} and Selmer parity~\cite{dokchitser} we see that, for even $m$, Theorem~\ref{big3sel} holds not just for $\Sel_3(E_k)$ but for $\Sha(E_k)[3]$ as well.  

Our methods involve the connection between $\phi$-Selmer groups and binary cubic forms.  This connection was first studied by Selmer himself \cite{selmer}, and later by Cassels~\cite{cas2}.  The rational theory was thoroughly treated by Satg\'e \cite{satge}, where he studied the $\phi$-Selmer group from the point of view of cubic fields.  Later, Liverance~\cite{Liv} studied these Selmer groups using the classical invariant theory of binary cubics. 

The boundedness of the average size of the $\phi$- and $\hat\phi$-Selmer groups---and thus the rank---of the elliptic curves $E_k \colon y^2=x^3 +k$, $k\in\Z$, was first demonstrated by Fouvry~\cite{fouvry}, who used Satg\'e's results to reduce the boundedness of the average rank to the theorem of Davenport and Heilbronn~\cite{dh} on the boundedness of the average size of the 3-torsion subgroups of the class groups of quadratic fields.  Fouvry's method thus implicitly used  binary cubic forms, as Davenport and Heilbronn's proof on the mean size of the 3-torsion of the class groups of quadratic fields used a count of integral binary cubic forms to count cubic fields of bounded discriminant, together with class field theory to transform this count to one about 3-torsion elements in class groups of quadratic fields. 

Recently, in \cite{bv} a more direct proof of Davenport and Heilbronn's theorem on 3-torsion elements in class groups in quadratic fields was given.  This proof used a count of {\it integer-matrix} binary cubic forms $ax^3+3bx^2y+3cxy^2+dy^3$ ($a,b,c,d\in\Z$), together with a {direct}  correspondence between 3-torsion ideal classes in quadratic fields and integer-matrix binary cubic forms as studied in \cite{eisenstein,hcl1} (see Section~\ref{secorbits} for a description of this correspondence over any Dedekind domain).  This suggested to us that perhaps a natural discriminant-preserving map from $\phi$-Selmer groups $\Sel_\phi(E_k)$ to orbits of integer-matrix binary cubic forms could be constructed.  This construction is the key to our method, and is the subject of Sections~\ref{construction}--6.  

With this direct map in hand, the problem of determining the average size of the $\phi$-Selmer group is reduced to counting the appropriate set of integer-matrix binary cubic forms of bounded discriminant.  For this, the counting method of Davenport~\cite{Davenport2} and a suitable adaptation of the sieve methods of \cite{dh} and \cite{bs2} may be applied to obtain the optimal upper and lower bounds, and this is carried out in Section~\ref{proofmain}.  This sieve renders the final answer as a product of local densities, and we prove that the density at the $p$-adic place for a given elliptic curve $E_k \colon y^2=x^3+k$ is given precisely by the local Selmer ratio $c_p(\phi)$ as defined by (\ref{deflsr}).  For $k$ varying over an acceptable set $S$, this yields Theorem~\ref{mainS} (see \S\ref{final}).  

We may evaluate the local Selmer ratios by relating them to Tamagawa numbers and using Tate's algorithm (Proposition \ref{selmeratio}).  Setting $S=\Z$ then yields Theorem~\ref{main}, while setting $S=T_m$, and applying a formula of Cassels~\cite{cas} on the global Selmer ratio, yields Theorem~\ref{mavg} (see \S\S\ref{final}--\ref{Tmproof}).
Finally, the results on average rank and on positive proportions of rank~0 and 3-Selmer rank~1 curves, as in Theorems~\ref{rankthm}--\ref{rank1} and Corollary~\ref{rank0or1}, are deduced in Section~\ref{ranks}. 

We note that the connection between binary cubic forms and Selmer groups also has an interpretation in terms of Lie groups.  In the language of Vinberg theory, the representation of $\SL_2$ on binary cubic forms arises from a $\Z/3\Z$-grading of the Lie algebra of $G_2$.  This paper is one example of recent work connecting representations arising from Vinberg's theory to Selmer groups of Jacobians of algebraic curves.  In this context, we believe that our theorems above give the first results for the exact and finite average size of a
Selmer group associated to an isogeny that is not the
multiplication-by-$n$ isogeny.  For results and conjectures about the
latter see, e.g., \cite{bg, thorne1, thorne2, bs2, HB, Kane,KMR, PR, SD}.  A beautiful treatment of the connection between algebraic curves and Vinberg theory, using versal deformations, has been given by Thorne~\cite{thorne1}.  

One may ask about the $\phi$-Selmer group in families of quadratic twists $E_{km^3}\colon y^2=x^3+km^3$, for fixed $k$ and $m$ varying, as well as in families of cubic twists $E_{km^2}\colon y^2=x^3+km^2$.  These are very sparse subsets of all curves $E_k$, and so our results in Theorem~\ref{mainS} do not apply.  Nevertheless, in a forthcoming paper~\cite{bkls}, it is shown that the analogue of Theorem~\ref{mainS} continues to hold even in these families of quadratic twists; in fact, the formula (\ref{mainSformula}) for the average size of the $\phi$-Selmer group holds more generally in any family of quadratic twists of an elliptic curve curve (not necessarily of $j$-invariant 0) having a rational 3-isogeny, leading to a proof of the boundedness of the average rank and a positive proportion having rank 0 also in such quadratic twist families.  We note that the analogue of the formula in Theorem~\ref{mainS}  holds also for the above cubic twist families~\cite{BEKS?}, although in most such cases the product (\ref{mainSformula}) does not converge and we find that the average $\phi$-Selmer group sizes for curves in these families are unbounded, {\it except} in the case where $k=1$. 
 
Finally, in another forthcoming work, Kriz and Li (private communication) give a very different proof that a positive proportion of the curves $y^2=x^3+k$ have rank 0 (resp.\ 1); they show that around 10\% of these curves have rank 0, and a similar proportion have rank 1.  Their $p$-adic methods are completely different from ours, and in particular, their rank 1 results are unconditional and do not rely on the recent work of Skinner, W.\ Zhang, and others on converse theorems.  It will be an interesting future question to see if there is a way to increase the rank proportions by combining our method with theirs.
\section{Integer-matrix binary cubic forms over a Dedekind domain}\label{secorbits}

Let $V^*(\Z) = \Sym^3 \Z^2$ be the lattice of integer-coefficient binary cubic forms, i.e., forms $f(u,v) = au^3 + bv^2y + cuv^2 + dv^3$ with $a,b,c,d\in\Z$.  To ease notation, we write $f(u,v) = [a,b,c,d]$.  The group $\GL_2(\Z)$ acts naturally on $V^*(\Z)$ by linear change of variable, and the {\it discriminant} 
\[\Disc(f) = b^2c^2 -4ac^3 - 4b^3d - 27a^2d^2 +18abcd\]
is $\SL_2(\Z)$-invariant.  

In this paper, the more fundamental object will be the dual lattice $V(\Z) = \Sym_3 \Z^2$ consisting of {\it integer-matrix} binary cubic forms, i.e., forms $f(u,v) = [a,3b,3c,d]$ with $a,b,c,d \in \Z$.  The lattice $V(\Z)$ has its own {\it $($reduced$)$ discriminant} $$\Delta(f) = -\frac{1}{27}\Disc(f).$$  
We note that the action of $\GL_2(F)$ on the space $V(F):=V(\Z)\otimes F$ of binary cubic forms with coefficients in $F$ satisfies
\[\Delta(g\cdot f) = \det(g)^6 \Delta(f)\]
for all $g \in \GL_2(F)$ and $f \in V(F)$. For any ring $R$ and $d \in R$, we write $V(R)_d$ for the set of $f \in V(R):=V(\Z)\otimes R$ with $\Delta(f) = d$.

In this section, we classify the orbits of $V(D)$, under the action of $\SL_2(D)$, for an arbitrary Dedekind domain $D$.  We assume that $D$ does not have characteristic 2 or 3.  In later sections we will apply our classification to the case where $D$ is a field, $\Z_p$, or $\Z$.  Our result is a generalization of \cite[Theorem~13]{hcl1}, and is proved in the same way.  



\begin{theorem}\label{hcl1}
Let $D$ be a Dedekind domain of characteristic not $2$ or $3$, and let $k\in D$ be any nonzero element.  Let $F$ be the fraction field of $D$, and let $S:=D[z]/(z^2 - k)$ and $K := F[z]/(z^2 - k)$.  
Then there is a bijection between the orbits of $\SL_2(D)$ on $V(D)_{4k}$ and equivalence classes of triples $(I, \delta, s)$, where $I$ is a fractional $S$-ideal, $\delta \in K^*$, and  $s \in F^*$, satisfying the relations $I^3 \subset \delta S$, $N(I)$ is the principal fractional ideal $sD$ in $F$, and $N(\delta) = s^3$ in $F^*$.  Two triples $(I, \delta, s)$ and $(I', \delta', s')$ are equivalent if there exists $\kappa \in K^*$ such that $I '=  \kappa I$, $\delta' = \kappa^3 \delta$, and $s' = N(\kappa)s$.  Under this correspondence, the stabilizer in $\SL_2(D)$ of $f \in V(D)_{4k}$ is isomorphic to $S(I)^*[3]_{N = 1}$, where $S(I)$ is the ring of endomorphisms of $I$. 
\end{theorem}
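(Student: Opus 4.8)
The plan is to follow the proof of \cite[Theorem~13]{hcl1} closely: exhibit the bijection by explicit formulas, check $\SL_2(D)$-equivariance, and identify the stabilizer, after two reductions that keep the ideal-theoretic bookkeeping under control. Since $D$ is Dedekind, every rank-two projective $D$-module with trivial determinant is free (Steinitz), so each fractional $S$-ideal $I$ appearing in a triple is $D$-free after inverting finitely many primes of $D$; thus the whole statement may be checked after localizing at each prime of $D$, and the resulting local data glue with no obstruction because $\SL_2$-torsors over $D$ are trivial. It therefore suffices to prove the theorem when $D$ is a PID (indeed a DVR). Secondly, I would fix the quadratic ring: for $f \in V(D)_{4k}$, the Hessian of $f$ is a binary quadratic form whose discriminant is forced by $\Delta(f) = 4k$ to be $\Disc(S) = 4k$, so it endows the standard module $D^2$ with the structure of a fractional $S$-ideal, and $S$ itself is recovered from $\Delta(f)$. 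So the quadratic ring is always $S = D[z]/(z^2-k)$, and the only moving data are the $S$-ideal $I$ and the scalars $\delta, s$.

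\textbf{The correspondence.} Given a triple $(I,\delta,s)$, choose a $D$-basis $\{e_1,e_2\}$ of $I$ (available after the first reduction) and let $f \in V(D)$ be the binary cubic form whose value $f(u,v)$ is the $z$-component of $\delta^{-1}(ue_1+ve_2)^3 \in \delta^{-1}I^3 \subseteq S = D \oplus Dz$. The inclusion $I^3 \subseteq \delta S$ makes $f$ integral, and since its $u^2v$- and $uv^2$-coefficients are each $3$ times the $z$-component of an element of $S$, one gets for free that $f \in V(D) = \Sym_3 D^2$ is integer-matrix; the norm compatibilities $N(I) = sD$ and $N(\delta) = s^3$ then force $\Delta(f) = 4k$. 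This construction descends to equivalence classes, since replacing $(I,\delta,s)$ by $(\kappa I, \kappa^3\delta, N(\kappa)s)$ for $\kappa \in K^*$ leaves the cube map $x \mapsto \delta^{-1}x^3$ literally unchanged, while changing the $D$-basis of $I$ by $g \in \SL_2(D)$ replaces $f$ by $g \cdot f$; so the correspondence also intertwines the two actions.

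\textbf{Inverse map and stabilizer.} Conversely, given $f \in V(D)_{4k}$, I would recover $S$ from $\Delta(f)$, the $S$-module $I$ from (a fixed multiple of) the Hessian of $f$, and $\delta \in K^*$ from an explicit formula in the coefficients of $f$ (exactly as in \cite{eisenstein,hcl1}), and then check that this triple maps back to $f$ under the construction above, and that no other $\SL_2(D)$-orbit and no other equivalence class of triples intervenes --- i.e. that the two constructions are mutually inverse. Granting this, the bijection is established; and $g \in \Stab_{\SL_2(D)}(f)$ corresponds precisely to $\kappa \in K^*$ with $\kappa I = I$ (so $\kappa \in S(I)^*$), $\kappa^3\delta = \delta$ (so $\kappa^3 = 1$), and $N(\kappa)s = s$ (so $N(\kappa) = 1$), which is the group $S(I)^*[3]_{N=1}$ in the statement.

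\textbf{Main obstacle.} I expect the crux to be the explicit dictionary $f \leftrightarrow (I,\delta)$ together with the verification that $N(I) = sD$ and $N(\delta) = s^3$ correspond exactly to ``$f$ has coefficients in $D$'' and ``$\Delta(f) = 4k$'': getting the powers of $3$ and the discriminant normalizations right, and running the argument uniformly over all $D$, including the degenerate cases where $k$ is a square (so $S$ and $K$ acquire zero divisors and $f$ is reducible) and the non-maximal orders where $I$ is non-invertible and $I^3 \subsetneq \delta S$. The reduction to a DVR together with the Steinitz gluing should absorb the remaining structural subtleties, leaving in essence the computation already carried out in \cite{hcl1,eisenstein}.
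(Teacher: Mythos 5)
Your core construction---sending $(I,\delta,s)$ to the $\tau$-component of the cube map $x\mapsto\delta^{-1}x^3$ evaluated on a $D$-basis of $I$, with integrality coming from $I^3\subset\delta S$ and the integer-matrix condition coming for free from the multinomial coefficients---is exactly the paper's, and like the paper you defer the detailed verification that the two constructions are mutually inverse to \cite{hcl1}. But two steps need repair. First, the reduction to a DVR is both unnecessary and unjustified: the obstruction to gluing local $\SL_2(D_\p)$-equivalences into a global one is a torsor under the \emph{stabilizer} of $f$ (a finite group scheme of $\mu_3$-type), not under $\SL_2$, so the triviality of $\SL_2$-torsors over $D$ does not give you the gluing. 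Indeed, the number of global integral orbits inside a rational orbit genuinely differs from the product of the local counts in general---this is precisely why the paper must prove its Proposition~\ref{stabs} with stabilizer weights. The fix is to notice that no localization is needed at all: the hypothesis that $N(I)=sD$ is principal says exactly that the Steinitz class of the rank-2 projective $D$-module $I$ is trivial, so $I$ is free over $D$ \emph{globally}, and the construction can be run once over $D$. (You state the Steinitz fact but then draw only the weaker ``free after inverting finitely many primes'' conclusion and head into the problematic local-global argument.)

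Second, you never use $s$ to normalize the basis of $I$, and without that the form $f$ is well defined only up to $\GL_2(D)$, not $\SL_2(D)$, and its discriminant is pinned down only up to $\det(g)^6$. The paper chooses the ordered basis $\langle\alpha,\beta\rangle$ of $I$ so that $\alpha\wedge\beta=s(1\wedge\tau)$ in $\wedge^2_D I$; this is the entire role of $s$ in the triple: it cuts the $\GL_2(D)$-ambiguity down to an $\SL_2(D)$-orbit, forces $\Delta(f)=4k$ exactly rather than up to a unit sixth power, and is compatible with the equivalence $(I,\delta,s)\mapsto(\kappa I,\kappa^3\delta,N(\kappa)s)$ since $\kappa\alpha\wedge\kappa\beta=N(\kappa)(\alpha\wedge\beta)$. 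With these two corrections your argument coincides with the paper's.
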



\begin{remark}\label{valid}
{\em We call triples $(I, \delta, s)$ satisfying the relations above {\it valid triples}.  }
\end{remark}

\begin{proof}
We describe here the explicit bijection, but refer  readers to \cite{hcl1} for the details of the proof.  First, note that $S = D + D\tau$, with $\tau$ the image of $x$ in $S$.   
Given a valid triple $(I, \delta, s)$, since $N(I)$ is the principal $D$-ideal $sD$, the projective $D$-module $I$ of rank 2 is free, and so we may write $I = D\alpha  + D\beta$ for some $\alpha, \beta\in I$.  
 Because $I^3\subset \delta S$, we have
\begin{equation}\label{trilinearform1}
\begin{array}{ccl}
\alpha^3 &=& \delta(e_0 + \tau a)\\
\alpha^2 \beta &=& \delta(e_1 + \tau b)\\
\alpha \beta^2 &=& \delta(e _2+ \tau c)\\
 \beta^3 &=& \delta(e_3 + \tau d),
\end{array}   
\end{equation}
for some integers $a,\, b,\, c,\, d,e_i\in\Z$.  Then  corresponding to $(I,\delta, s)$ is the binary cubic form $f = [a, 3b,3c,d]$, which has discriminant $\Delta(f) = 4k$.   In more coordinate-free terms, $f$ is the symmetric trilinear form 
\begin{equation}\label{trilinearform}\frac1\delta\times:I\times I\times I\to S/D\cong D\tau.\end{equation}
We obtain an $\SL_2(D)$-orbit of symmetric trilinear forms (integer-matrix binary cubic forms) over~$D$ by taking the symmetric $2\times2\times2$ matrix representation of this form with respect to any ordered basis $\langle\alpha,\beta\rangle$ of $I$ that gives rise to the basis element $s(1\wedge \tau)$ of the top exterior power of $I$ over $D$. This normalization deals with the difference between $\SL_2(D)$- and $\GL_2(D)$-orbits. The stabilizer statement follows because elements in $S(I)^*[3]_{N = 1}$ are precisely the elements of $K^*_{N=1}$ that preserve the map (\ref{trilinearform}). 
\end{proof}
When $D$ is a field, so that $D = F$, the previous result simplifies quite a bit. Let us write $(K^*/K^{*3})_{N =1}$ to denote the kernel of the norm map $K^*/K^{*3} \to F^*/F^{*3}$, and $(\Res^K_F  \mu_3)_{N = 1}$ for the kernel of the norm map $\Res^K_F \, \mu_3 \to \mu_3$.  

\begin{corollary}\label{bij}
There is a bijection between the set of $\SL_2(F)$-orbits on $V(F)_{4k}$ and $\left(K^*/K^{*3}\right)_{N =1}$. Moreover, the stabilizer of any $f \in V(F)_{4k}$  in $\SL_2(F)$ is isomorphic to $(\Res_F^K \, \mu_3)_{N = 1}$.     
\end{corollary}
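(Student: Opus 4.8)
The plan is to specialize Theorem~\ref{hcl1} to the case $D = F$, where the classification collapses dramatically. Since $k \neq 0$, the polynomial $z^2 - k$ is separable over $F$, so $K = S = F[z]/(z^2-k)$ is an \'etale $F$-algebra --- either a quadratic field or $F \times F$ --- and in particular a principal ideal ring. Hence every fractional $K$-ideal has the form $\kappa K$ with $\kappa \in K^*$, and every valid triple $(I,\delta,s)$ is equivalent, via the substitution $\kappa \mapsto \kappa^{-1}$ built into the equivalence relation of Theorem~\ref{hcl1}, to one with $I = K$. For such a normalized triple the conditions of the theorem simplify: $N(K) = F = sF$ imposes nothing beyond $s \in F^*$; the relation $K = K^3 \subset \delta K$ is automatic for $\delta \in K^*$; and the only surviving constraint is $N(\delta) = s^3$ in $F^*$. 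The residual equivalence is $(\delta, s) \sim (\kappa^3 \delta,\, N(\kappa)\, s)$ for arbitrary $\kappa \in K^*$, since every such $\kappa$ fixes the ideal $K$.

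I would then define the map to $(K^*/K^{*3})_{N=1}$ by sending the class of $(\delta,s)$ to the class of $\delta$. The relation $N(\delta) = s^3 \in F^{*3}$ shows the image lies in the kernel of the norm, and replacing $(\delta,s)$ by $(\kappa^3\delta, N(\kappa)s)$ does not alter the class of $\delta$ modulo cubes, so the map is well defined. Surjectivity is immediate: a class with $N(\delta) = t^3$ for some $t \in F^*$ is the image of the valid pair $(\delta, t)$.

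The one point that requires care is injectivity. If $(\delta,s)$ and $(\delta',s')$ have the same image, then $\delta' = \kappa^3 \delta$ for some $\kappa \in K^*$; comparing norms gives $s'^3 = N(\kappa)^3 s^3$, so $s' = \zeta\, N(\kappa)\, s$ for some $\zeta \in \mu_3(F)$. When $\zeta_3 \notin F$ this forces $\zeta = 1$ and the two triples are already equivalent. When $\zeta_3 \in F$ one checks --- separately in the cases $K$ a field, where $N_{K/F}(\zeta_3^j) = \zeta_3^{2j}$, and $K \cong F\times F$, where $N((\zeta_3^j,1)) = \zeta_3^j$ --- that the norm map $\mu_3(K)\to\mu_3(F)$ is surjective; choosing $u \in \mu_3(K)$ with $N(u) = \zeta$ and replacing $\kappa$ by $\kappa u$ leaves $\kappa^3\delta = \delta'$ unchanged while correcting $N(\kappa)s$ to $s'$, so again the triples are equivalent. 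This establishes the bijection. For the stabilizer assertion, Theorem~\ref{hcl1} identifies $\Stab_{\SL_2(F)}(f)$ with $S(I)^*[3]_{N=1}$; since $I \cong K$ is free of rank one over $K$, its endomorphism ring $S(I)$ is $K$ itself, whence the stabilizer is $K^*[3]_{N=1} = \mu_3(K)_{N=1}$, which is precisely $(\Res_F^K \mu_3)_{N=1}$.

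I expect essentially no obstacle beyond this cube-root-of-unity bookkeeping; everything else is a mechanical specialization. As a consistency check one can also derive the statement cohomologically: $\Stab_{\SL_2}(f) = (\Res_F^K\mu_3)_{N=1}$ as an $F$-group scheme, $H^1(F,\SL_2) = 1$ by Hilbert~90, and $H^1(F, (\Res_F^K\mu_3)_{N=1}) = (K^*/K^{*3})_{N=1}$ by Shapiro's lemma and Kummer theory --- which is the cohomological shadow of the explicit computation above, and could serve as an alternative proof.
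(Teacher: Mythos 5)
Your proof is correct and follows the same route as the paper: the paper's proof is precisely the specialization of Theorem~\ref{hcl1} to $D=F$, normalizing every valid triple to the form $(K,\delta,s)$ with $s$ a cube root of $N(\delta)$ and reading off the stabilizer from $S(I)=K$. You have simply filled in the well-definedness, surjectivity, and (cube-root-of-unity) injectivity details that the paper leaves implicit, and these checks are all accurate.
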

\begin{proof}
Both statements follow from taking $D = F$ in the previous theorem.  The bijection sends $\delta \in (K^*/K^{*3})_{N = 1}$ to the orbit of binary cubic forms corresponding to the triple $(K, \delta, s)$, where $s$ is any choice of cube root of $N(\delta)$.      
\end{proof}

\begin{remark}\label{explicitcubic}{\em 

 Explicitly, if $\delta = a + b\tau \in K^*$, then the  corresponding cubic form is $f=[ak,3bk,3a,b].$}
\end{remark}

Another case of interest is when $D = \Z_p$ and $S$ is the maximal order in $K$:
\begin{proposition}\label{intorb}
Let $p$ be a prime and assume $S = \Z_p[z]/(z^2 - k)$ is the maximal order in $K = \Q_p[z]/(z^2 - k)$.  Then the set of $\SL_2(\Z_p)$-orbits on $V(\Z_p)_{4k}$ is in bijection with the unit subgroup $(S^*/S^{*3})_{N =1} \subset (K^*/K^{*3})_{N = 1}$.  Every rational $\SL_2(\Q_p)$-orbit of discriminant $4k$ whose class lies in this unit subgroup contains a unique integral $\SL_2(\Z_p)$-orbit.   The stabilizer in $\SL_2(\Q_p)$ of an element in $V(\Z_p)_{4k}$ is equal to its stabilizer in $\SL_2(\Z_p)$. 
\end{proposition}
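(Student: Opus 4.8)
The plan is to obtain all three assertions from Theorem~\ref{hcl1} applied with $D=\Z_p$, using two consequences of the hypothesis that $S$ is the maximal order: $S=\mathcal O_K$ is a principal ideal domain (a complete discrete valuation ring when $K$ is a field, and $\Z_p\times\Z_p$ when $K\cong\Q_p\times\Q_p$), and $S$ is integrally closed in $K$. I would first normalize the valid triples of Theorem~\ref{hcl1}: since every fractional $S$-ideal is principal, each valid triple $(I,\delta,s)$ is equivalent to one with $I=S$, and then the relations $I^3\subseteq\delta S$, $N(I)=s\Z_p$, $N(\delta)=s^3$ collapse to $\delta\in S^*$, $s\in\Z_p^*$, $N(\delta)=s^3$. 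The residual equivalence is by $\kappa\in S^*$ acting as $\delta\mapsto\kappa^3\delta$, $s\mapsto N(\kappa)s$; the one point to check is that, for fixed $\delta$, the admissible values of $s$ form a single $\mu_3(\Z_p)$-coset all of whose members are equivalent, which holds because squaring is an automorphism of $\mu_3$, so for $\zeta\in\mu_3(\Z_p)$ one may take $\kappa=\zeta^2\in S^*$ with $\kappa^3=1$ and $N(\kappa)=\zeta$. This produces a bijection between $\SL_2(\Z_p)$-orbits on $V(\Z_p)_{4k}$ and $(S^*/S^{*3})_{N=1}$. That this set is naturally a subgroup of $(K^*/K^{*3})_{N=1}$ I would get by checking $S^*\cap K^{*3}=S^{*3}$: if $v^3=u\in S^*$ then $v$ is integral over $\Z_p$, so $v\in\mathcal O_K=S$, and $N(v)\in\Z_p^*$, so $v\in S^*$. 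This proves the first assertion, and it identifies the stabilizer in $\SL_2(\Z_p)$ of $f$ with $S(I)^*[3]_{N=1}$, where $S(I)=S$ by maximality, hence with the finite group $S^*[3]_{N=1}$.

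For the second assertion the crucial observation is that the bijection just built and the bijection of Corollary~\ref{bij} are given by the same explicit recipe: by Remark~\ref{explicitcubic}, the class of $\delta=a+b\tau$ corresponds to the form $[ak,3bk,3a,b]$ in both cases. Hence the map sending an $\SL_2(\Z_p)$-orbit to the $\SL_2(\Q_p)$-orbit containing it corresponds, under the two bijections, exactly to the inclusion $(S^*/S^{*3})_{N=1}\hookrightarrow(K^*/K^{*3})_{N=1}$ above. Existence of an integral orbit inside a rational orbit whose class lies in the unit subgroup is then the surjectivity of integral orbits onto that subgroup, and uniqueness is the injectivity of the inclusion. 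For the last assertion, Corollary~\ref{bij} identifies $\Stab_{\SL_2(\Q_p)}(f)$ with $(\Res^K_{\Q_p}\mu_3)_{N=1}=\mu_3(K)_{N=1}$, and $\mu_3(K)=S^*[3]$ since cube roots of unity are integral units; so $\Stab_{\SL_2(\Q_p)}(f)$ has the same finite cardinality as $\Stab_{\SL_2(\Z_p)}(f)\cong S^*[3]_{N=1}$, and since the latter is contained in the former they coincide.

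I expect the main obstacle to lie in the middle step: one has to look inside the construction of Theorem~\ref{hcl1} rather than invoke it as a black box, in order to check that the integral and rational parametrizations are compatible with forgetting integrality, and that the map $S^*/S^{*3}\to K^*/K^{*3}$ is well defined and injective---which is exactly where integral closedness of $S$ is used. Dispatching the $\mu_3(\Z_p)$-ambiguity in the parameter $s$ is a minor but genuine extra point.
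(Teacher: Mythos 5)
Your argument is correct and follows essentially the same route as the paper's (very terse) proof: use that every fractional ideal of the maximal order is principal to normalize valid triples to $(S,\delta,s)$ with $\delta\in S^*$, and deduce the stabilizer claim from $K^*[3]_{N=1}=S^*[3]_{N=1}$. The extra details you supply --- the $\mu_3$-ambiguity in $s$, the injectivity of $S^*/S^{*3}\to K^*/K^{*3}$ via integral closedness, and the compatibility of the integral and rational parametrizations --- are exactly the points the paper leaves implicit, and you handle them correctly.
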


\begin{proof}
Indeed, every $S$-ideal $I$ is principal in this case.  From the $\Z_p$-version of Theorem \ref{hcl1}, it follows that any integral orbit corresponds to a triple $(S, \delta,s)$ with $\delta \in S^*$. The stabilizer statement follows because $K^*[3]_{N=1}=S^*[3]_{N=1}$ when $S$ is the maximal order in $K$. 
\end{proof}

Finally, we will need the following result on stabilizers of integer-matrix binary cubic forms over $\Z$.  
Let $G = \SL_2$ and for any ring $R$ and $f\in V(R)$, write $\Aut_R(f)$ for $\Stab_{G(R)}(f)$. 

\begin{proposition}\label{stabs}
Suppose $f \in V(\Z)_{4k}$ is an integer-matrix binary cubic form of discriminant $4k$.  Then 
\[|\Aut_\Q(f)| \sum_{ f' \in O(f)} |\Aut_{\Z}( f' )|^{-1} = \prod_p  |\Aut_{\Q_p}(f)| \sum_{f'_p \in O_p(f)} |\Aut_{\Z_p}(f'_p)|^{-1},\]
where $O(f)$ is a set of representatives for the $G(\Z)$-orbits on $V(\Z)_{4k}$ contained in the $G(\Q)$-orbit of~$f$, and similarly $O_p(f)$ is a set of representatives for the $G(\Z_p)$-orbits on $V(\Z_p)_{4k}$ contained in the $G(\Q_p)$-orbit of $f$.   
\end{proposition}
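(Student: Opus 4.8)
The plan is to interpret both sides of the identity as computations of the same adelic volume, following the standard mass-formula / local-global strategy for counting integral orbits in a prehomogeneous vector space. Concretely, fix the $G(\Q)$-orbit $X := G(\Q)\cdot f \subseteq V(\Q)_{4k}$. By Corollary~\ref{bij}, $X$ is a single orbit, and the stabilizer $\Aut_\Q(f) = \Stab_{G(\Q)}(f)$ is a finite group, say of order $s_\infty$; likewise $\Aut_{\Q_p}(f)$ is finite of order $s_p$, and for almost all $p$ we have $s_p = 1$ (indeed $(\Res_{\Q_p}^K\mu_3)_{N=1}$ is trivial once $p$ is unramified in $K$ and $p \neq 3$, so the product on the right is finite). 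The quantity $\sum_{f' \in O(f)} |\Aut_\Z(f')|^{-1}$ is the \emph{mass} of the set of integral orbits lying in $X$, and similarly $\sum_{f'_p \in O_p(f)} |\Aut_{\Z_p}(f'_p)|^{-1}$ is the local mass at $p$; the content of the proposition is that $s_\infty \cdot (\text{global mass}) = \prod_p s_p \cdot (\text{local mass at } p)$.

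First I would choose a $G(\Q)$-invariant (equivalently, a nonzero element of the top exterior power, since $\Delta$ is fixed on $V(\cdot)_{4k}$) and a Haar measure $dg$ on $G(\A)$, normalized compatibly so that the Tamagawa number $\tau(\SL_2) = 1$. Since $G = \SL_2$ acts on the fibre $V_{4k}$ with stabilizer scheme $H$ whose $\Q$-points form the finite group $\Aut_\Q(f)$, the orbit $X$ is isomorphic as a $G$-variety to $H\backslash G$, and one pushes the Tamagawa measure down to an invariant measure on $X(\A) = \prod_v X(\Q_v)$. The key identity is the orbit-counting (Siegel-type) formula: integrating the characteristic function of $V(\widehat\Z)_{4k} \times X(\R)$ over $X(\A)$ against this measure gives, on one hand, $\Vol(H(\Q)\backslash H(\A)) \cdot \#\{G(\Z)\text{-orbits in } X \cap V(\Z)_{4k}, \text{ weighted by } |\Aut_\Z|^{-1}\}$ — using that each such orbit contributes $\Vol(G(\Z)\backslash G(\R)) / |\Aut_\Z(f')|$ and the global volume factors appropriately — and on the other hand the product of local integrals $\int_{V(\Z_p)_{4k}} (\cdots)\,$, each of which unwinds to $|\Aut_{\Q_p}(f)|^{-1} \sum_{f'_p} |\Aut_{\Z_p}(f'_p)|^{-1}$ times a local volume factor, by the analogous local orbit count (Proposition~\ref{intorb} tells us exactly which $\Q_p$-orbits contain integral points when $S$ is maximal, and in general one uses Theorem~\ref{hcl1} over $\Z_p$). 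Matching the volume normalizations — here is where $\tau(\SL_2)=1$ and the fact that $H$ is finite (so $\tau(H) = |H(\Q)|$, up to the standard convention) enter — collapses all the volume factors and leaves precisely the asserted equality, after moving $|\Aut_\Q(f)|$ and the finite product $\prod_p |\Aut_{\Q_p}(f)|$ to the appropriate sides.

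The main obstacle is bookkeeping the normalizations and the convergence: one must check that the Euler product $\prod_p |\Aut_{\Q_p}(f)|^{-1} \sum_{f'_p} |\Aut_{\Z_p}(f'_p)|^{-1}$ converges (equivalently, that the local factor is $1 + O(p^{-1-\epsilon})$ for all but finitely many $p$, which follows from $\Aut_{\Q_p}(f)$ being trivial and $V(\Z_p)_{4k}$ meeting the $G(\Q_p)$-orbit of $f$ in a single $G(\Z_p)$-orbit at good primes, again via Proposition~\ref{intorb}), and that the archimedean orbit is accounted for with the correct (real, not complex) stabilizer. A clean way to package all of this is to appeal to the general principle — going back to work of Borel–Harish-Chandra and codified in this context in the work on binary forms — that for a reductive $G$ acting on $V$ with finite stabilizers over the generic fibre, the global weighted orbit count over $\Z$ equals $\tau(G)$ times the product of local weighted orbit counts over $\Z_p$ divided by the real orbit-volume; specializing $G = \SL_2$, $\tau(G) = 1$, and rearranging gives the statement. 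I would present the argument in this packaged form, citing the relevant mass-formula lemma, rather than rederiving the Siegel formula from scratch.
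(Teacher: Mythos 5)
Your proposal takes a genuinely different route from the paper, and as written it has gaps that are not merely bookkeeping. The paper's proof is entirely elementary: using the parametrization of Theorem~\ref{hcl1}, it fixes a representative $\delta$ of the class corresponding to $f$ and shows that both sides of the identity compute the number $m(\delta)$ of $S$-ideals $I$ with $I^3\subset\delta S$ and $N(I)^3=N(\delta)$ --- the left side because the number of ideals producing a given integral orbit $f_I$ is $|\Aut_\Q(f)|/|\Aut_\Z(f_I)|$, the right side by the same count over $\Z_p$ together with the fact that a lattice is determined by its local completions, which gives $m(\delta)=\prod_p m_p(\delta)$ directly. No measures, volumes, or Tamagawa numbers enter.

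Your adelic mass-formula argument could in principle be made to work, but two of the steps you treat as standard are exactly the places where such arguments require nontrivial extra input. First, when you unfold the integral of the characteristic function of $V(\widehat{\Z})_{4k}$ over the adelic orbit $X(\mathbb{A})$, the resulting sum runs over \emph{all} $G(\Q)$-orbits of rational points lying in the $G(\mathbb{A})$-orbit of $f$, i.e.\ over the kernel of $H^1(\Q,H)\to\prod_v H^1(\Q_v,H)$ for $H=(\Res_\Q^K\mu_3)_{N=1}$. To isolate the single orbit of $f$ --- which is what the Proposition is about --- you must prove this kernel is trivial; here that amounts to a Grunwald--Wang-type statement for cubes in $K^*$, which is true but is an additional theorem you neither state nor invoke. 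Second, the assertion that $\Vol(H(\Q)\backslash H(\mathbb{A}))$ equals $|H(\Q)|$ ``up to the standard convention'' is not a theorem for a finite group scheme $H$: the product of counting measures over all places does not converge without convergence factors, and the correctly normalized volume involves $|\Sha^1(\Q,H)|$ and the choice of local normalizations --- precisely the quantities whose triviality and compatibility you would need to verify. Until these two points are supplied the argument does not close, and once they are supplied you will have done at least as much work as the paper's direct ideal count. (Your observation that all but finitely many local factors equal $1$, via Proposition~\ref{intorb}, is correct and is also what makes the right-hand side a finite product in the paper's proof.)
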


\begin{proof}
We follow the proof of \cite[Proposition~8.9]{bg}.  Let $K = \Q[z]/(z^2 - k)$, and fix a representative $\delta \in K^*$ of the class in $(K^*/K^{*3})_{N=1}$ corresponding to $f$.  Let $m(\delta)$ be the number of ideals $I$ of $S = \Z[z]/(z^2 - k)$ satisfying $I^3 \subset \delta S$ and the ideal equality $N(I)^3 = N(\delta)$.  Similarly, let $m_p(\delta)$ be the number of ideals $I_p$ of $S_p := S \otimes \Z_p$ with $I_p^3 \subset \delta S_p$ and $N(I_p)^3 = N(\delta)$.  Note that for all but finitely many $p$, $S_p$ is the maximal order and $\delta$ is a unit, so $m_p(\delta) =1$ for all but finitely many $p$.  

Since a lattice is determined by its local completions, and since a collection of local ideals produces a global ideal with the desired properties, we have      
\begin{equation}\label{lg} m(\delta) = \prod_p m_p(\delta).\end{equation}
Let $s$ be any cube root of $N(\delta)$.  Then the triple $(I, \delta, s)$ is valid and hence corresponds to an integer-matrix binary cubic form $f_I \in V(\Z)_{4k}$ mapping to the rational orbit of $\delta$.  If $s'$ is another choice of cube root, then $s' = s\zeta_3$ and $(I,\delta, s)$ is equivalent to $(I,  \delta, s')$.  A triple $(I,\alpha,s)$ is in the same integral orbit as $(cI, \alpha,s)$, for some $c \in K^\times$, exactly when $c^3 = 1$ and $N(c) = 1$.  On the other hand, the ideals $I$ and $cI$ are equal if $c$ is a unit in the ring of endomorphisms $S(I)$ of $I$.  Thus, the number of distinct ideals giving the same integral orbit is the size of the group $K^*[3]_{N = 1} /S(I)^*[3]_{N =1}$.  

We have $K^*[3]_{N = 1} \cong \Aut_{\Q}(f)$ and $S(I)^*[3]_{N =1} \cong \Aut_{\Z}(f_I)$ by Theorem \ref{hcl1}.  Thus, the number of distinct ideals associated to the integral orbit of $f_I$ is $|\Aut_{\Q}(f)|/|\Aut_{\Z}(f_I)|$.  We conclude that 
\[m(\delta) = \sum_{f_I \in O(f)} |\Aut_{\Q}(f)|/|\Aut_{\Z}(f_I)|.\]
The same reasoning implies an analogous formula for $m_p(\delta)$, with global stabilizers replaced by local stabilizers.  The proposition now follows from (\ref{lg}).      
\end{proof}

\section{The elliptic curves $E_k$ and orbits of binary cubic forms over a field}\label{construction}

Let $F$ be a field of characteristic not 2 or 3, and let $k \in F$ be non-zero.  Recall that the elliptic curve  
\begin{equation}E_k \colon y^2 = x^3  + k\end{equation}
admits a 3-isogeny $\phi \colon E_k \to E_{-27k}$ defined over $F$, and a dual 3-isogeny $\hat\phi \colon E_{-27k}\to E_k$.  

In this section, we describe the connection between the isogenies $\phi$ and $\hat\phi$ and binary cubic forms (i.e., symmetric trilinear forms) over $F$. 

\subsection{Galois cohomology of the 3-isogeny kernel and field arithmetic}

Important in the study of $\phi$- and $\hat\phi$-descents on the curves $E_k$ and $E_{-27k}$ is the pair of ``mirror'' quadratic \'etale $F$-algebras
\[K = F[z]/(z^2 - k) \hspace{ 4mm} \mbox{and} \hspace{4mm} \hat K = F[z]/(x^2 + 27k).\] 
The following result connects the arithmetic of the elliptic curves $E_{-27k}$ and $E_{k}$ to the arithmetic of $K$ and $\hat K$:

\begin{proposition}\label{cassels}
There is an isomorphism of  group schemes
\[E_{-27k}[\hat \phi] \cong \ker\left(\Res_F^K \mu_3 \to \mu_3\right),\]  
and an induced isomorphism 
\[H^1(G_F, E_{-27k}[\hat\phi]) \cong \left(K^*/K^{*3}\right)_{N = 1},\]
where $\left(K^*/K^{*3}\right)_{N= 1}$  denotes the kernel of the norm $N\colon K^*/K^{*3} \to F^*/F^{*3}$.
\end{proposition}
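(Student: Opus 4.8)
The plan is to pin down the kernel $E_{-27k}[\hat\phi]$ as a Galois module and identify it with $\ker(\Res_F^K\mu_3\to\mu_3)$, then deduce the cohomological statement from the long exact sequence.

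First I would compute $E_{-27k}[\hat\phi]$ explicitly. From \eqref{phihatdef}, the isogeny $\hat\phi_k\colon E_{-27k}\to E_k$ collapses the three points $(0,\pm y)$ together with the origin, so its kernel is $\{O,(0,9\sqrt{-27k/9^3}\cdot\dots)\}$ — more cleanly, the kernel of $\hat\phi$ is the set of points of $E_{-27k}$ with $x=0$ (together with $O$), i.e. the points $(0,y)$ with $y^2=-27k$. Thus $E_{-27k}[\hat\phi]\cong\mu_3$-torsor: it is a finite flat group scheme of order 3 which over $\bar F$ is cyclic, and the Galois action on the nontrivial points is through the character giving the extension $F(\sqrt{-27k})=F(\sqrt{-3k})$. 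Concretely, there is a natural $G_F$-equivariant bijection between the nonzero $\bar F$-points of $E_{-27k}[\hat\phi]$ and the square roots of $-27k$, which sits inside $\hat K^\times$. The cleanest formulation: $E_{-27k}[\hat\phi]$ is the twist of $\mu_3$ corresponding to the quadratic algebra $\hat K=F[z]/(z^2+27k)$, and there is a canonical isomorphism $E_{-27k}[\hat\phi]\cong\ker(\Res_F^{\hat K}\mu_3\xrightarrow{N}\mu_3)$. Since $\hat K=F[\sqrt{-27k}]$ and $K=F[\sqrt k]$ differ by the quadratic character of $\sqrt{-3}\in F$ — wait, over $F$ of characteristic not $2,3$ these are genuinely different algebras, so I need to be careful which algebra appears; following the ``mirror'' terminology, $E_k[\phi]\cong\ker(\Res_F^{\hat K}\mu_3\to\mu_3)$ and $E_{-27k}[\hat\phi]\cong\ker(\Res_F^K\mu_3\to\mu_3)$. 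I would verify this by matching the explicit Weil-pairing/Galois data: the splitting field of $E_{-27k}[\hat\phi]$ is $F(\zeta_3,\sqrt{k})$ (the field over which the $\sqrt{-3}$-endomorphism descends), and the $G_F$-action is precisely that on $\ker(\Res_F^K\mu_3\to\mu_3)$. This identification, and checking it is an isomorphism of group schemes (not just of étale sheaves, so also in the wildly ramified or char-3-ish primes — but here char $\neq 2,3$ so étale suffices), is the main technical point; I expect this to be the chief obstacle, and I would do it by direct computation with the defining equations, or cite Cassels \cite{cas2} / Satg\'e \cite{satge} where the relevant $\mu_3$-twist is identified.

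Granting the group-scheme isomorphism $E_{-27k}[\hat\phi]\cong G:=\ker(\Res_F^K\mu_3\xrightarrow{N}\mu_3)$, the cohomological statement follows formally. Apply $G_F$-cohomology to the short exact sequence of group schemes
\[
1\longrightarrow G\longrightarrow \Res_F^K\mu_3\xrightarrow{\;N\;}\mu_3\longrightarrow 1,
\]
which is exact because the norm map on $\mu_3$ is surjective (already on $F$-points, since $\mu_3(F)\subset\mu_3(K)$ maps onto $\mu_3(F)$ by $x\mapsto x^2$, or by $x\mapsto x$ composed with inclusion—in any case the cokernel sheaf vanishes). By Shapiro's lemma, $H^i(G_F,\Res_F^K\mu_3)\cong H^i(G_K,\mu_3)$, and Kummer theory gives $H^1(G_K,\mu_3)\cong K^\times/K^{\times3}$ and $H^1(G_F,\mu_3)\cong F^\times/F^{\times3}$; moreover the map induced by $N$ on $H^1$ is the norm map $K^\times/K^{\times3}\to F^\times/F^{\times3}$ (this compatibility of the norm on cohomology with the field norm under Shapiro is standard). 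The long exact sequence then reads
\[
H^0(G_F,\mu_3)\xrightarrow{N} \cdots \to H^1(G_F,G)\to K^\times/K^{\times3}\xrightarrow{N} F^\times/F^{\times3},
\]
and I must check the connecting map $H^0(G_F,\mu_3)=\mu_3(F)\to H^1(G_F,G)$ — actually the relevant surjectivity onto $(K^\times/K^{\times3})_{N=1}$ requires that $H^1(G_F,G)\to K^\times/K^{\times3}$ has image exactly the kernel of $N$, which is immediate from exactness, and that this map is injective, which follows since $H^0(G_F,\mu_3)\xrightarrow{N}H^0(G_F,\mu_3)$, i.e. $\mu_3(F)\to\mu_3(F)$ induced by $N|_{\mu_3}$, is surjective (it is, being either the identity or squaring depending on whether $\sqrt k\in F$). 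Hence $H^1(G_F,E_{-27k}[\hat\phi])\cong(K^\times/K^{\times3})_{N=1}$, as claimed. I would also remark that this is the $\hat\phi$-analogue of the $\phi$-statement implicit in Corollary \ref{bij}, so the two are consistent.
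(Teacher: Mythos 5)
Your proposal is correct, and the second half (the cohomological statement) is essentially the paper's argument: take the long exact sequence of $1\to E_{-27k}[\hat\phi]\to\Res_F^K\mu_3\to\mu_3\to 1$, apply Shapiro and Kummer theory; you additionally verify the injectivity of $H^1(G_F,E_{-27k}[\hat\phi])\to K^*/K^{*3}$ via surjectivity of the norm on $\mu_3$-points, which the paper leaves implicit. The first half is where you diverge. The paper constructs the embedding $\iota\colon E_{-27k}[\hat\phi]\hookrightarrow\Res_F^K\mu_3$ canonically from the duality pairing $E_{-27k}[\hat\phi]\times E_k[\phi]\to\mu_3$, via $P\mapsto(\langle P,Q_1\rangle,\langle P,Q_2\rangle)$ where $Q_1,Q_2=(0,\pm\sqrt k)$ are the nontrivial points of $E_k[\phi]$ (rational over $K$); since $Q_2=-Q_1$, the norm of the image is $\langle P,O\rangle=1$, and nondegeneracy plus a count of orders identifies the image with $\ker N$. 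This buys you a choice-free isomorphism with no case analysis. Your route instead computes the kernel explicitly ($(0,\pm\sqrt{-27k})$, so Galois acts on the two nontrivial points through $\chi_{-3k}$) and matches it against $\ker(\Res_F^K\mu_3\to\mu_3)$, whose nontrivial points carry the action $\chi_{-3}\cdot\chi_K=\chi_{-3k}$; since in characteristic $\ne 3$ these are \'etale group schemes of order $3$, a Galois-equivariant bijection of the nontrivial points is automatically a group-scheme isomorphism, so the step you flag as the ``chief obstacle'' is in fact routine once the characters are matched. Do note that your intermediate assertion $E_{-27k}[\hat\phi]\cong\ker(\Res_F^{\hat K}\mu_3\to\mu_3)$ is wrong (that kernel carries the action $\chi_{-3}\cdot\chi_{\hat K}=\chi_K$ and is the mirror object $E_k[\phi]$); you catch and correct this, but in a final write-up the computation of the two characters should replace the hedging, and the ``splitting field'' of $E_{-27k}[\hat\phi]$ is more precisely $F(\sqrt{-3k})$ rather than $F(\zeta_3,\sqrt k)$.
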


\begin{proof}
Duality gives a non-degenerate pairing
\[\langle \;\,, \; \rangle \colon  E_{-27k}[\hat\phi] \times E_k[\phi] \to \mu_3.\]
Since $E_k[\phi](\bar F)$ becomes a trivial Galois module when restricted to $G_K$, this induces an injective homomorphism of group schemes $\iota \colon E_{-27k}[\hat\phi] \to \Res^K_F \mu_3$,
given on points by \[P \mapsto (\langle P, Q_1 \rangle, \langle P, Q_2\rangle),\]
where $Q_1$ and $Q_2$ are the non-trivial points of $E_k[\phi]$.  The image of $\iota$ is precisely the kernel of the norm map $\Res^K_F \mu_3 \to \mu_3$, giving the desired isomorphism.

From Kummer theory and  the long exact sequence attached to  
\[0 \to E_{-27k}[\hat\phi] \to \Res_F^K \, \mu_3  \to \mu_3 \to 0,\]
we obtain the isomorphism $H^1(G_F, E_{-27k}[\hat\phi]) \cong \left(K^*/K^{*3}\right)_{N = 1}$.
\end{proof}

\begin{remark}\label{kummer}
{\em The sequence $0 \to E_{-27k}[\hat\phi] \to E_{-27k} \to E_k \to 0$ induces a Kummer map 
\[\partial \colon E_k(F) \to H^1(G_F, E_{-27k}[\hat\phi]) \cong \left(K^*/K^{*3}\right)_{N = 1},\] which can be described explicitly as follows.  If $(x,y)\notin E_k[\phi](F)$,  then  $\partial((x,y)) = y - \tau$, where $\tau$ is the image of $x$ in $K = F[z]/(z^2 - k)$.  If $P = (0,\pm \sqrt k) \in E_k[\phi]$, then $\partial(P) =  \pm 1/2\tau$.  See \cite[\S 15]{cas}.        }
\end{remark}

\begin{remark}{\em 
Of course, the analogues of all these results hold also when $\hat\phi$ is replaced with~$\phi$ and $K$ is replaced with~$\hat K$; these analogues are obtained simply via the change of variable $k\mapsto -27k$.}
\end{remark}


%

\subsection{Connection to binary cubic forms}

We may now compare Corollary \ref{bij} and Proposition \ref{cassels}.  This immediately 
yields the following canonical bijection:

\begin{theorem}\label{H1bij}
There is a bijection between $H^1(G_F, E_{-27k}[\hat\phi])$ and the $\SL_2(F)$-orbits on $V(F)_{4k}$.    Moroever, the stabilizer in $\SL_2(F)$ of any $f \in V(F)_{4k}$ is isomorphic to $E_{-27k}[\hat\phi](F)$. 
\end{theorem}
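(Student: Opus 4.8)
The proof of Theorem~\ref{H1bij} is essentially a matter of chaining together two identifications that have already been established, so the plan is to set up the comparison carefully and then check that the resulting bijection is canonical (in particular, independent of the auxiliary choices made in Corollary~\ref{bij}). First I would recall the isomorphism from Proposition~\ref{cassels},
\[
H^1(G_F, E_{-27k}[\hat\phi]) \;\cong\; \bigl(K^*/K^{*3}\bigr)_{N=1},
\]
where $K = F[z]/(z^2-k)$, and the bijection from Corollary~\ref{bij} between $\bigl(K^*/K^{*3}\bigr)_{N=1}$ and the set of $\SL_2(F)$-orbits on $V(F)_{4k}$. Composing these yields the desired bijection. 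Concretely, a class $\delta \in \bigl(K^*/K^{*3}\bigr)_{N=1}$ is sent to the orbit attached to the triple $(K,\delta,s)$ for any cube root $s$ of $N(\delta)$; Remark~\ref{explicitcubic} makes this totally explicit: writing $\delta = a+b\tau$, the orbit is that of $f = [ak,3bk,3a,b]$, which indeed has $\Delta(f)=4k$.

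The one point requiring a small argument is that this composite is genuinely canonical. The map of Proposition~\ref{cassels} is canonical (it comes from the duality pairing $E_{-27k}[\hat\phi]\times E_k[\phi]\to\mu_3$ and Kummer theory applied to $0\to E_{-27k}[\hat\phi]\to \Res^K_F\mu_3\to\mu_3\to 0$), so it suffices to check that the bijection of Corollary~\ref{bij} does not depend on the choice of cube root $s$ of $N(\delta)$. This is exactly the observation already recorded inside the proof of Proposition~\ref{stabs}: if $s' = s\zeta_3$ is another cube root then the triples $(K,\delta,s)$ and $(K,\delta,s')$ are equivalent, so they define the same $\SL_2(F)$-orbit. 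Hence the composite bijection is well-defined and canonical.

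For the stabilizer statement, I would argue on both sides and match. By Corollary~\ref{bij}, the stabilizer in $\SL_2(F)$ of any $f\in V(F)_{4k}$ is isomorphic to $(\Res^K_F\mu_3)_{N=1}$, the kernel of the norm map $\Res^K_F\mu_3\to\mu_3$. By the group-scheme isomorphism in Proposition~\ref{cassels}, $E_{-27k}[\hat\phi]\cong \ker(\Res^K_F\mu_3\to\mu_3) = (\Res^K_F\mu_3)_{N=1}$. Taking $F$-points gives $E_{-27k}[\hat\phi](F)\cong (\Res^K_F\mu_3)_{N=1}$, so the stabilizer of $f$ in $\SL_2(F)$ is isomorphic to $E_{-27k}[\hat\phi](F)$, as claimed. (One can even note that the stabilizer sits inside $K^*[3]_{N=1}$ via the description in Theorem~\ref{hcl1}, which is literally the $F$-points of the same kernel, so the two identifications are compatible, though for the statement as given only the isomorphism is needed.)

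I do not expect any serious obstacle here: the content is entirely front-loaded into Corollary~\ref{bij} and Proposition~\ref{cassels}, and the theorem is their formal consequence. The only thing to be careful about — and the part I would write out explicitly rather than wave at — is the independence of the cube-root choice $s$, since that is what upgrades a ``bijection'' to a ``canonical bijection''; everything else is bookkeeping. If one wanted to be thorough one could also remark that naturality in $F$ (compatibility with base change of fields) follows from naturality of each of the two constituent maps, but this is not asserted in the statement and I would leave it implicit.
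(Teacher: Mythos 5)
Your proof is correct and follows exactly the paper's route: the paper also obtains Theorem~\ref{H1bij} by directly composing the isomorphism of Proposition~\ref{cassels} with the bijection of Corollary~\ref{bij}, for both the orbit correspondence and the stabilizer statement. Your extra check that the composite is independent of the choice of cube root $s$ is a reasonable piece of diligence the paper leaves implicit, but it does not change the argument.
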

Let $V(F)^\sol$ denote the set of binary cubic forms $f(x,y)\in V(F)$ that correspond under the bijection of Theorem~\ref{H1bij} to classes in the image of the Kummer map $\partial \colon E_k(F) \to H^1(G_F, E_{-27k}[\hat\phi])$ for some $k \in F$.  If $ f\in V(F)^\sol$, then we also say that $f$ is {\it soluble}.  We write $V(F)^\sol_{D}$ for the elements of $V(F)^\sol$ having discriminant $D$. 

\begin{corollary}\label{soluble}
There is a natural bijection between  the $\SL_2(F)$-orbits on
  $V(F)^\sol_{4k}$ and the elements of the group $E_k(F)/\hat\phi(E_{-27k}(F))$.  
  Under this bijection, the identity element of  $E_k(F)/\hat\phi(E_{-27k}(F))$ corresponds to the unique $\SL_2(F)$-orbit of reducible binary cubic forms in $V(F)^\sol_{4k}$, namely the orbit of $f = [k,0,3,0]$. 
\end{corollary}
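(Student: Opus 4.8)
The plan is to derive the bijection by restricting the bijection of Theorem~\ref{H1bij} to soluble forms, and to pin down the reducible orbit by an explicit cubing computation in the trilinear-form model of Theorem~\ref{hcl1}. For the bijection itself: Theorem~\ref{H1bij} identifies $V(F)_{4k}/\SL_2(F)$ with $H^1(G_F, E_{-27k}[\hat\phi])$, and by definition $V(F)^\sol_{4k}$ is the preimage of the image of the Kummer map $\partial\colon E_k(F)\to H^1(G_F, E_{-27k}[\hat\phi])$, so the $\SL_2(F)$-orbits on $V(F)^\sol_{4k}$ biject with $\mathrm{im}(\partial)$. The exact sequence $E_{-27k}(F)\xrightarrow{\hat\phi}E_k(F)\xrightarrow{\partial}H^1(G_F, E_{-27k}[\hat\phi])$ attached to $0\to E_{-27k}[\hat\phi]\to E_{-27k}\to E_k\to 0$ identifies $\mathrm{im}(\partial)$ with $E_k(F)/\hat\phi(E_{-27k}(F))$, and this is the source of the group structure on the orbit set.

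Next I would track the identity. Since $\partial$ is a homomorphism, the identity of $E_k(F)/\hat\phi(E_{-27k}(F))$ corresponds to the trivial class $0\in H^1(G_F, E_{-27k}[\hat\phi])$ (which does lie in $\mathrm{im}(\partial)$, so the corresponding orbit is soluble); under Proposition~\ref{cassels} this is the class of $1=1+0\cdot\tau\in\bigl(K^*/K^{*3}\bigr)_{N=1}$, and under Corollary~\ref{bij} together with Remark~\ref{explicitcubic} it is the $\SL_2(F)$-orbit of $f=[k,0,3,0]$. Since $f(u,v)=ku^3+3uv^2=u(ku^2+3v^2)$, this orbit consists of reducible forms, and it remains to prove it is the \emph{only} orbit of reducible forms (a fortiori the only one in $V(F)^\sol_{4k}$). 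A binary cubic of nonzero discriminant is reducible exactly when it has a zero in $\mathbb{P}^1(F)$, a property preserved by $\SL_2(F)$; so it is enough to show that if $\delta=a+b\tau\in K^*$ satisfies $N(\delta)\in F^{*3}$ and the form $f_\delta=[ak,3bk,3a,b]$ of Remark~\ref{explicitcubic} vanishes at some $[u_0:v_0]\in\mathbb{P}^1(F)$, then $\delta\in K^{*3}$.

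To see this, recall from the proof of Theorem~\ref{hcl1} that $f_\delta$ comes from a basis $K=F\alpha+F\beta$ with $\alpha^3=\delta(e_0+ak\,\tau)$, $\alpha^2\beta=\delta(e_1+bk\,\tau)$, $\alpha\beta^2=\delta(e_2+a\,\tau)$, $\beta^3=\delta(e_3+b\,\tau)$ for some $e_i\in F$, whence, cubing $u\alpha+v\beta$ with $u,v$ indeterminate,
\[(u\alpha+v\beta)^3=\delta\bigl(g(u,v)+\tau\,f_\delta(u,v)\bigr),\qquad g(u,v)=e_0u^3+3e_1u^2v+3e_2uv^2+e_3v^3.\]
Evaluating at $[u_0:v_0]$ annihilates the $\tau$-component, so $(u_0\alpha+v_0\beta)^3=\delta\,g(u_0,v_0)$; the left side is nonzero since $\alpha,\beta$ is an $F$-basis of $K$ and $(u_0,v_0)\neq(0,0)$, so $g(u_0,v_0)\in F^*$, $u_0\alpha+v_0\beta\in K^*$, and $\delta=(u_0\alpha+v_0\beta)^3\,g(u_0,v_0)^{-1}\in K^{*3}F^*$. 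Taking norms and using $N(\delta)\in F^{*3}$ forces $g(u_0,v_0)^2\in F^{*3}$, hence $g(u_0,v_0)\in F^{*3}\subseteq K^{*3}$ (as $2$ is invertible mod $3$), and therefore $\delta\in K^{*3}$. So $f_\delta$ lies in the trivial orbit, namely that of $[k,0,3,0]$, as claimed.

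The one step requiring genuine care is the displayed cubing identity: it is precisely the trilinear-form construction behind Theorem~\ref{hcl1} and Remark~\ref{explicitcubic}, evaluated on $u\alpha+v\beta$ with $u,v$ treated as variables, so one must carry along the symmetric $2\times2\times2$-matrix normalization that distinguishes $\SL_2$- from $\GL_2$-orbits; granting it, everything downstream is elementary arithmetic in $K^*/K^{*3}$.
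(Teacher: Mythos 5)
Your proposal is correct. For the bijection and for the identification of the identity orbit you follow exactly the paper's (very terse) route: restrict Theorem~\ref{H1bij} to the image of the Kummer map, and apply Remark~\ref{explicitcubic} with $\delta=1$ (i.e.\ $a=1$, $b=0$) to get the orbit of $[k,0,3,0]$. Where you go beyond the paper is in actually \emph{proving} the uniqueness of the reducible orbit, which the paper's one-line proof asserts without argument. Your uniqueness argument --- a zero of $f_\delta$ in $\P^1(F)$ forces $\delta\in K^{*3}$, via the cubing identity $(u\alpha+v\beta)^3=\delta\bigl(g(u,v)+\tau f_\delta(u,v)\bigr)$ --- is sound; it is in effect the degenerate $z=0$ case of the computation the paper performs later in the proof of Proposition~\ref{solubility} (equation~(\ref{conversely})), and it establishes the stronger fact that the orbit of $[k,0,3,0]$ is the unique reducible orbit in all of $V(F)_{4k}$, not merely in $V(F)^{\sol}_{4k}$. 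The one point deserving an extra word is the split case $K\cong F\times F$, where a nonzero element need not be invertible: your chain of inferences survives because $\delta\,g(u_0,v_0)$ is either $0$ or a unit of $K$, and an \'etale algebra has no nonzero nilpotents, so $(u_0\alpha+v_0\beta)^3\neq 0$ already forces $g(u_0,v_0)\in F^{*}$ and hence $u_0\alpha+v_0\beta\in K^{*}$.
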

 \begin{proof}
The second part of the corollary follows from Remark \ref{explicitcubic}, by taking $\delta=a=1$ and $b=0$.  
\end{proof}

The use of the term ``soluble'' comes from the following fact:


\begin{proposition}\label{solubility}
A binary cubic form $f(x,y)\in V(F)_{4k}$ corresponds to an element $\delta$ in the image of $\partial \colon E_k(F) \to H^1(G_F, E_{-27k}[\hat\phi])\cong\left(K/K^{*3}\right)_{N = 1}$ if and only if the curve $C_f \colon z^3 = f(x,y)$ in $\P^2$ has an $F$-rational point.  
\end{proposition}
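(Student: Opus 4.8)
The plan is to unwind the bijection of Corollary~\ref{bij} (equivalently, Theorem~\ref{H1bij}) at the level of explicit equations, and to identify the curve $C_f \colon z^3 = f(x,y)$ as the Brauer--Severi-type covering whose rational points detect membership in the image of $\partial$. First I would recall from Remark~\ref{explicitcubic} that a class $\delta = a + b\tau \in (K^*/K^{*3})_{N=1}$ corresponds to the cubic form $f = [ak, 3bk, 3a, b]$, and from Remark~\ref{kummer} that the image of $\partial$ on a generic point $(x,y) \in E_k(F)$ is the class of $y - \tau \in K^*$. So the concrete claim to prove is: the class of $a + b\tau$ is the class of $y-\tau$ for some $(x,y) \in E_k(F)$ (or is trivial, or is $\pm 1/2\tau$) if and only if $z^3 = akx^3 + 3bkx^2y + 3axy^2 + by^3$ has an $F$-point.

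The key computational step is to show that, writing $K = F[\tau]$ with $\tau^2 = k$ and denoting by $N \colon K \to F$ the norm, the curve $C_f$ is $F$-isomorphic to the twisted curve $\{\, N_{K/F}(\xi) = 1,\ \text{``}\xi \equiv \delta^{-1} \ (\text{up to cubes})\text{''}\,\}$; more precisely, a point of $C_f(F)$ produces an element $\xi \in K^*$ with $N(\xi) \in F^{*3}$ and $\xi \cdot \delta \in K^{*3}$, i.e., $\delta$ is trivial in $(K^*/K^{*3})_{N=1}$ modulo the image of a point, which is exactly what it means for $\delta$ to be hit by $\partial$ after translating by the orbit of $[k,0,3,0]$ (the identity class, by Corollary~\ref{soluble}). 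I would make this precise by observing that $f$, viewed via the trilinear-form description \eqref{trilinearform} as the map $\tfrac1\delta \times \colon I \times I \times I \to S/D$ with $I = K$ and $S = K$ over the field $F$, has the property that $f(x,y) = \tfrac1\delta N_{K/F}(x\alpha + y\beta) \cdot (\text{unit})$ for the chosen basis $\langle \alpha,\beta\rangle = \langle 1,\tau\rangle$ scaled appropriately; hence a solution $z^3 = f(x,y)$ says precisely that $\delta^{-1} N_{K/F}(x + y\tau)$ is a cube in $F$ (compatibly with norms), which is the statement that $\delta$ and $N_{K/F}(x+y\tau)$ agree in $(K^*/K^{*3})_{N=1}$. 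Since the latter is $\partial$ of the point $(x,y)$ when $x+y\tau$ has the right shape, and since $C_f(F) \neq \emptyset$ can always be translated to a standard representative, this gives both directions.

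Concretely I would organize the proof in four steps. Step 1: compute directly, from $f = [ak,3bk,3a,b]$, that $f(x,y)\cdot \ell(x,y)^2 = N_{K/F}\bigl((ax+by) + (?)\tau\bigr)$ or a similarly clean identity relating $f$ to a norm form from $K$ — the precise bookkeeping of the scaling factor $s$ (a cube root of $N(\delta)$) is where the only real calculation lies. Step 2: deduce that a point on $C_f$ yields $\xi \in K^*$ with $N(\xi) \in F^{*3}$ and $\delta \equiv \xi \pmod{K^{*3}}$, so $\delta$ lies in the subgroup generated by norms-of-cubes and the class of $y-\tau$, which by Remark~\ref{kummer} is exactly $\mathrm{im}\,\partial$ (the image is a subgroup, so closure under the translation by $[k,0,3,0] \leftrightarrow$ identity is automatic). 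Step 3: for the converse, given $(x,y) \in E_k(F)$ with $\partial((x,y)) = y - \tau$, verify that the cubic form attached to the class $y - \tau$ literally has the shape from Remark~\ref{explicitcubic} with $(a,b)$ read off from $y - \tau = y + (-1)\tau$, and exhibit the obvious point on its $C_f$; handle the two special cases $(x,y) \in E_k[\phi](F)$ and the identity separately using the explicit $\partial(P) = \pm 1/2\tau$ and the orbit of $[k,0,3,0]$. Step 4: note Galois-equivariance / independence of the chosen cube root $s$, so the statement is well defined on $\SL_2(F)$-orbits. The main obstacle I anticipate is Step 1: pinning down the exact multiplicative relation between $f(x,y)$, the norm form $N_{K/F}$, and the auxiliary scalar $s = N(\delta)^{1/3}$ so that ``$z^3 = f(x,y)$ has a solution'' translates into ``$\delta$ is a cube times a norm-trivial class in the image of $\partial$'' on the nose — this is essentially a clean but fiddly identity in the cubic algebra $S/D \cong D\tau$, and getting the cube-root normalization consistent with Corollary~\ref{soluble}'s base orbit $[k,0,3,0]$ is the delicate point; everything else is formal once that identity is in hand.
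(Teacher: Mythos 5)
Your overall route is the same as the paper's: use Remark~\ref{kummer} and Remark~\ref{explicitcubic} to write down the explicit form attached to $\partial((x,y)) = y-\tau$ and observe it visibly represents a cube (forward direction), and use the trilinear-form description to convert a point of $C_f$ into a point of $E_k(F)$ hitting $\delta$ (converse). The forward direction and the special cases in your Step~3 are fine. But the pivotal identity you defer to Step~1 --- and explicitly flag as the unresolved ``main obstacle'' --- is both unproven and misstated. The cubic form $f$ cannot equal $\tfrac1\delta N_{K/F}(x\alpha+y\beta)\cdot(\text{unit})$, nor any twist of the norm form: $N_{K/F}$ on $K=F[\tau]$ is the \emph{quadratic} form $u^2-kv^2$, so there is a degree mismatch, and no choice of the auxiliary linear factor $\ell(x,y)^2$ fixes this while keeping the statement ``$z^3=f(x,y)$ is soluble'' equivalent to a norm condition. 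The correct identity is not a norm identity at all; it is already supplied by the construction, namely \eqref{trilinearform1}/\eqref{trilinearform}: $f(u,v)$ is the $D\tau$-component of $\delta^{-1}(u+v\tau)^3$, i.e.
\[
(u+v\tau)^3 \;=\; \delta\bigl(g(u,v)+f(u,v)\,\tau\bigr)
\]
for some cubic $g$ over $F$. The norm enters only \emph{after} this: setting $z=1$ so that $f(u,v)=1$, one applies $N$ to both sides to get $N(u+v\tau)^3=s^3\bigl(g(u,v)^2-k\bigr)$, which manufactures the rational point $\bigl(N(u+v\tau)/s,\;g(u,v)\bigr)\in E_k(F)$ whose image under $\partial$ is $g(u,v)-\tau\equiv\delta\pmod{K^{*3}}$. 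Your Step~2 conclusion (``$\delta$ lies in the subgroup generated by norms-of-cubes and classes $y-\tau$'') is also weaker than what is needed and what this computation gives: one must actually exhibit a single point of $E_k(F)$ mapping to $\delta$, which the displayed identity does directly. So the skeleton is right, but the one step you identify as carrying all the content is the step that is wrong as written; replacing your conjectured norm identity by the cube identity above closes the gap and recovers the paper's argument.
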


\begin{proof}
Suppose $f\in V(F)_{4k}$, and let $\delta\in K^*$ be a representative element in $\left(K/K^{*3}\right)_{N = 1}$
corresponding to the $\SL_2(F)$-orbit of $f$ under the bijection of Corollary~\ref{bij}.  


If $\delta$ represents the image of a point $(x,y)\in E_k(F)$ under $\partial$,
then, by Remarks~\ref{explicitcubic} and \ref{kummer}, the form $f$ is $\SL_2(F)$-equivalent to 
$[1,3y,3k,yk]$, which represents a cube (namely, $1$). 

Conversely, if there exist $u,v,z \in F$ such that $z^3 = f(u,v)$, then by scaling $u$ and $v$ if necessary we may assume that $z=1$. Therefore, we have by (\ref{trilinearform1}) or (\ref{trilinearform}) 
that 
\begin{equation}\label{conversely}(u+v\tau)^3 = \delta(g(u,v) + f(u,v)\tau)=\delta(g(u,v)+\tau)
\end{equation}
where $g$ is a binary cubic form over $F$, and so $g(u,v)\in F$. Writing $N(\delta)=s^3$, and then taking norms of both sides of (\ref{conversely}), we obtain $N(u+v\tau)^3=s^3(g(u,v)^2-k)$.  Thus the point $(x,y)=(N(u+v\tau)/s,g(u,v))$ lies on $E_k$, and 
this point $(x,y)$ then maps to the class of $\delta$ in $\left(K/K^{*3}\right)_{N = 1}$ under $\partial$. 
\end{proof}

\begin{remark}{\em 
Recall that a $\hat\phi$-covering is a map of curves $C \to E_k$ over $F$ which is a twist of $\hat\phi$.  By descent, the group $H^1(G_F, E_{-27k}[\hat\phi])$ is in bijection with isomorphism classes of $\hat\phi$-coverings.  To construct the $\hat\phi$-covering corresponding to $\delta \in \left(K/K^{*3}\right)_{N = 1}$, take any $s \in F^*$ such that $N(\delta) = s^3$, and let $f$ be the corresponding binary cubic form over $F$ under the bijection of Corollary~\ref{bij}.  Then we take $C_f$ to be the curve $z^3=f(x,y)$ in $\P^2$ (whose Jacobian is easily computed to be $E_k$), and the $\hat\phi$-covering map $C_f \to E_k$ corresponding to the class $\delta$ is given explicitly by  
\[(u\colon v\colon z) \mapsto ((u^2 - kv^2)/s, g(u,v)),\]  
where $g(u,v)$ is the cubic polynomial in the preceding proof.}
\end{remark}

\begin{remark}{\em One can also construct the $\hat\phi$-covering using the invariant theory of binary cubics. If $f  = [a,3b,3c,d] \in V(F)$, then the covariants of $f$ are generated by the discriminant $\Delta$, the scaled Hessian
\begin{equation}
\label{hessian}
  h(x,y) = \frac{1}{36}(f_{xx} f_{yy} - f_{xy}^2)
  = (ac-b^2) x^2 + (ad-bc) xy + (bd-c^2),
\end{equation}
and the Jacobian derivative of $f$ and $h$,
\begin{equation}
\label{jacderiv}
  g(x,y) = \frac{\partial(f,h)}{\partial(x,y)}
         = f_x h_y - f_y h_x,
\end{equation}
which is a cubic polynomial in $x,y$ whose coefficients are cubic polynomials
in $a,b,c,d$.
The cubic $f$ and its covariants $g,h$ are related by the syzygy\footnote{
  These results are classical, at least for $F = \C$,
  and they go back at least to Hilbert \cite[pp.68--69]{Hilbert};
  see also Schur's treatment \cite[II, \S8, Satz~2.24 on p.~77]{Schur}.
  The syzygy (\ref{syzygy}) can be verified by direct computation,
  though it is easier to check it for one choice of
  $f$ without repeated factors and then use the fact that
  $\SL_2(\Fbar)$ acts transitively on such~$f$.
  For example, $f=x^3-y^3$ gives $\Delta=1$, $h=-xy$ 
  and $g = -3(x^3+y^3)$, 
  reducing the syzygy (\ref{syzygy}) to
  the identity $(x^3+y^3)^2 - (x^3-y^3)^2 = 4(xy)^3$.
  }
\begin{equation}
\label{syzygy}
(g/3)^2 -   \Delta(f) f^2  + 4h^3 = 0.
\end{equation}
This gives us a degree-$3$ map from the genus-1 curve
$$
C_f \colon z^3 = f(x,y)
$$
to~$E_k$ with $k = \Delta/4$:
divide both sides of (\ref{syzygy}) by $4z^6$ and solve for $(g/2z^3)^2$
to obtain
\begin{equation}
\label{jac_map}
  \left(\frac16\frac{g}{z^3}\right)^2
  = \left(-\frac{h}{z^2}\right)^3 + \frac{\Delta(f)}{4}.
\end{equation}
Our curve $E_{-27k}$ is the special case of $C_f$ where
$f(x,y) = kx^3 + 3x y^2$;
we then see that $\Delta(f) = 4k$,
and the map $(z, y/x) \mapsto (-h/z^2, g/6z^3)$ to $E_k$
recovers our formula (\ref{phihatdef}) for the \hbox{3-isogeny $\hat\phi$.}} 
\end{remark}

\section{Soluble orbits over local fields}

When $F$ is a local field, we can give explicit formulas for the number of soluble $\SL_2(F)$-orbits of binary cubic forms of discriminant $4k$, i.e., the size of the group $E_k(F)/\hat\phi(E_{-27k}(F))$.  Since $|E_{-27k}[\hat\phi](F)|$ is 3 or 1 depending on whether $-3k$ is a square in $F$, it is equivalent to give formulas for the (local) Selmer ratios
\[c(\hat\phi_k) = \dfrac{|\coker(E_{-27k}(F) \to E_k(F))|}{|\ker(E_{-27k}(F) \to E_k(F))|}.\]
We do this below for the local fields $\Q_p$, $\R$, and $\C$, though there are similar formulas for any finite extension of $\Q_p$ and for equicharacteristic local fields such as $\F_p((t))$.  If $F = \Q_p$, $\R$, or $\C$, we use the notation $c_p(\hat\phi_k)$, with $p \leq \infty$, to match with the introduction.  We state the result for $c_p(\phi_k)$, the Selmer ratio of the original isogeny $\phi_k\colon E_k\to E_{-27k}$. 

\subsection{Orbits over $\Q_p$}\label{Qporbitsec}
%
Suppose $F = \Q_p$.  We first determine when $E_k/\Q_p$ has good reduction.

\begin{lemma}\label{reduction}
Assume $k \in \Z_p$ is sixth-power-free.  If $p > 3$, then $E_k/\Q_p$ has good reduction if and only if $p \nmid k$.  If $p = 3$, then $E_k/\Q_3$ has bad reduction.  If $p = 2$, then $E_k/\Q_2$ has good reduction if and only if
$k \equiv 16 \pmod {64}$.
\end{lemma}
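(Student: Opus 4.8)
The plan is to apply Tate's algorithm, or more elementarily, to track the minimal Weierstrass model and its discriminant. The curve $E_k\colon y^2 = x^3+k$ has discriminant $\Delta = -2^4\cdot 3^3\cdot k^2$ and $j$-invariant $0$; since we assume $k$ is sixth-power-free, the model $y^2=x^3+k$ is already minimal away from $2$ and $3$ (any change of variables $x\mapsto u^2x$, $y\mapsto u^3y$ scales $k$ by $u^6$, so minimality at $p>3$ is exactly the condition $p^6\nmid k$, which holds). Thus for $p>3$, $E_k$ has good reduction iff $p\nmid\Delta$ iff $p\nmid k$; this is the first claim. I would state this cleanly, noting that when $p\mid k$ the reduction is additive (potentially good, acquiring good reduction over a ramified extension), consistent with $j=0$.

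For $p=3$: here one cannot have good reduction because the model is not minimal-with-unit-discriminant and a short computation shows it cannot be improved enough. Concretely, $v_3(\Delta) = v_3(-2^4 3^3 k^2) = 3 + 2v_3(k) \geq 3$, and since $k$ is sixth-power-free we have $v_3(k)\le 5$, so $0 \le v_3(k)\le 5$. For good reduction at $3$ we would need a model with $v_3(\Delta)=0$; the only substitutions available scale $\Delta$ by $u^{12}$ (with $v_3(u)$ an integer) combined with translations, and since $j=0$ one checks via Tate's algorithm that the minimal discriminant valuation at $3$ is always positive (it is $3+2v_3(k)$ reduced modulo $12$ appropriately, but always $>0$ — in fact the curve has additive reduction at $3$ for every $k$). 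So $E_k/\Q_3$ always has bad reduction; I'd cite Tate's algorithm or Silvermanʼs table for $j$-invariant $0$ curves to make this airtight rather than grinding it out.

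For $p=2$: this is the delicate case and the main obstacle, since $y^2=x^3+k$ is never a minimal model with good reduction in its given form at $2$ (it is not even in the nicest Weierstrass shape for residue characteristic $2$). The plan is: good reduction at $2$ requires completing to a model $y^2 + a_1 xy + a_3 y = x^3 + \ldots$ with unit discriminant. Translating $y\mapsto y + (\text{something})$ is what's needed to introduce an $a_1$ or $a_3$ term. One finds that $E_k$ has good reduction at $2$ precisely when $k$ is a unit times a specific residue: concretely, writing the general quadratic twist / isomorphism classes, the curve $y^2=x^3+k$ over $\Q_2$ becomes, after $x\mapsto x+r$, $y\mapsto y$, a curve whose reduction can be made smooth exactly when one can absorb the $2$-part; the computation (Tate's algorithm at $p=2$, or comparison with the standard good-reduction models of $j=0$ curves over $\Q_2$) yields that good reduction holds iff $k\equiv 16\pmod{64}$. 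I would verify one direction by exhibiting, for $k=16$, the isomorphic model $y^2 = x^3 + 16 \cong$ (via $x\mapsto 4x$, $y\mapsto 8y+4$, i.e.\ completing the square differently) a smooth model over $\Z_2$ — for instance $y^2+y = x^3$ after suitable scaling, which has discriminant $-27$, a $2$-adic unit — and then show the congruence class mod $64$ is forced by running Tate's algorithm / computing the conductor exponent for the other residues. The hard part is bookkeeping the $2$-adic valuations through the non-obvious substitution, so I would lean on Tate's algorithm as a black box (it is invoked later in the paper anyway, per Proposition \ref{selmeratio}) and present the $k=16$ model explicitly as the witness, leaving the exhaustive check over residues mod $64$ as the routine-but-tedious verification.

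Throughout, I'd keep the hypothesis "$k$ sixth-power-free" in play so that "minimal model" statements are unambiguous, and note that the three cases $p>3$, $p=3$, $p=2$ are genuinely disjoint in behavior because $2,3 \mid \Delta$ structurally.
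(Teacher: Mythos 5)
Your proposal is correct and follows essentially the same route as the paper: invoke Tate's algorithm (equivalently, track the minimal discriminant $\Delta=-2^4 3^3 k^2$, whose valuation changes only by multiples of $12$), and exhibit for $k\equiv 16 \pmod{64}$ the explicit good model ${y'}^2+y'={x'}^3+(k-16)/64$ via $(x,y)=(4x',8y'+4)$, which is exactly the witness the paper gives. Your parity observation at $p=3$ (that $v_3(\Delta)=3+2v_3(k)$ is odd, hence never a multiple of $12$) is a nice self-contained shortcut for that case, but otherwise the arguments coincide.
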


\begin{proof}
This follows from Tate's algorithm.  If $k \equiv 16 \pmod {64}$, a model with good reduction at~$2$
is ${y'}^2 + y' = {x'}^3 + a_6$ where $k = 64a_6 + 16$ and
$(x,y) = (4x', 8y' + 4)$.
\end{proof}

Next, we express $c_p(\phi_k)$ in terms of the Tamagawa numbers of $E_k$ and~$E_{-27k}$ (cf.\ \cite[Lemma~3.8]{schaefer}).
\begin{proposition}\label{tamag}
If $k \in \Z_p$ is sixth-power-free, then 
\[ c_p(\phi_k) = \dfrac{c_p(E_{-27k})}{c_p(E_k)}\times \begin{cases}
3 & \mbox{if } p = 3 \mbox{ and } 27 \mid k; \mbox{ and}\\
1 & \mbox{otherwise},
\end{cases}\]
where $c_p(E) = |E(\Q_p)/E_0(\Q_p)|$ denotes the Tamagawa number of $E$.  
\end{proposition}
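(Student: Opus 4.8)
The plan is to compute the local Selmer ratio $c_p(\phi_k)$ directly from the definition~(\ref{deflsr}) and compare it to the ratio of Tamagawa numbers $c_p(E_{-27k})/c_p(E_k)$. Recall that $c_p(\phi_k) = |E_{-27k}(\Q_p)/\phi(E_k(\Q_p))| / |E_k[\phi](\Q_p)|$. The starting point is the standard device of relating $\phi$ on all of $E_k(\Q_p)$ to $\phi$ on the identity component $E_{k,0}(\Q_p)$, together with the snake lemma applied to the two short exact sequences $0 \to E_{k,0} \to E_k \to E_k/E_{k,0} \to 0$ and $0 \to E_{-27k,0} \to E_{-27k} \to E_{-27k}/E_{-27k,0} \to 0$ of groups of $\Q_p$-points, linked by $\phi$. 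This produces an exact hexagon relating $|E_k[\phi](\Q_p)|$, $|E_{-27k}(\Q_p)/\phi(E_k(\Q_p))|$, the analogous quantities for $\phi$ restricted to the identity components, and the groups of components $\Phi_k = E_k(\Q_p)/E_{k,0}(\Q_p)$ of order $c_p(E_k)$ and $\Phi_{-27k}$ of order $c_p(E_{-27k})$. Taking the alternating product of orders around the hexagon yields
\[
c_p(\phi_k) = \frac{|E_{-27k,0}(\Q_p)/\phi(E_{k,0}(\Q_p))|}{|E_{k,0}[\phi](\Q_p)|} \cdot \frac{c_p(E_{-27k})}{c_p(E_k)} \cdot \frac{|\coker(\bar\phi\colon \Phi_k \to \Phi_{-27k})|}{|\ker(\bar\phi\colon \Phi_k \to \Phi_{-27k})|},
\]
where $\bar\phi$ is the induced map on component groups.

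The next step is to show the first factor on the right is $1$. On the identity component, $E_{k,0}(\Q_p)$ is a finite-index subgroup of the $\Q_p$-points of the Néron model's connected component; for $p > 3$ with good reduction it is all of $E_k(\Q_p)$, and in general $E_{k,0}(\Q_p)$ is an extension of $E_{k,0}^{\rm sm}(\F_p)$ (a connected algebraic group over $\F_p$) by the formal group. One argues by a filtration: on the formal group $\phi$ is an isomorphism on $\Q_p$-points (it is an isogeny of degree prime to... — no, $3$ may divide $p$; more carefully, for $p \neq 3$ the degree is prime to $p$ so $\phi$ is an isomorphism on formal groups, and for $p = 3$ one uses that the kernel $E_k[\phi]$ has no nontrivial $\Q_3$-points lying in the formal group unless $k$ is suitably divisible, which is exactly where the extra factor of $3$ enters), and on the component of the special fibre the Selmer ratio of $\phi$ is $1$ because for a connected commutative algebraic group $G$ over a finite field, $|G(\F_p)| = |G'(\F_p)|$ for any isogenous $G'$ (Lang's theorem gives surjectivity, and counting kernels gives the ratio $1$). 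Assembling these, the identity-component factor is $1$ except possibly at $p = 3$, where a direct examination of the reduction type via Lemma~\ref{reduction} and Tate's algorithm shows the extra contribution is exactly $3$ precisely when $27 \mid k$.

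It then remains to handle the component-group factor $|\coker(\bar\phi)|/|\ker(\bar\phi)|$. Here I would invoke the duality between $\phi$ on $E_k$ and $\hat\phi$ on $E_{-27k}$ together with the fact that the Selmer ratios satisfy $c_p(\phi_k) c_p(\hat\phi_k) = |E_k[\phi](\Q_p)|^{-1}|E_{-27k}[\hat\phi](\Q_p)|^{-1} \cdot (\text{local degree terms})$ — more precisely the product $c_p(\phi_k) c_p(\hat\phi_k) = 1/3$ for $p \neq \infty$ by the formula for $c_p$ of a composite isogeny (since $\hat\phi \circ \phi = [3]$), which pins down the component-group contribution once the identity-component piece is known. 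Concretely, the plan is: first establish $c_p(\phi) c_p(\hat\phi) = c_p([3]) \cdot (\text{correction}) $ using multiplicativity of the Selmer ratio in isogenies, then combine with the already-computed identity-component factors to solve for the component-group factor, and verify it telescopes against $c_p(E_{-27k})/c_p(E_k)$ as claimed. The alternative — and probably cleaner — route is to not split off the component-group factor at all, but to observe that the hexagon directly gives $c_p(\phi_k) = c_p(E_{-27k})/c_p(E_k) \cdot (\text{identity-component ratio})$ once one checks that the component-group map $\bar\phi$ contributes a factor that cancels, which follows because the component group of a Néron model and its behavior under isogeny is controlled by the same combinatorial data (Kodaira type) on both sides, and $\phi$, $\hat\phi$ being $3$-isogenies of $j=0$ curves, the Kodaira types of $E_k$ and $E_{-27k}$ differ in a controlled way readable from Tate's algorithm.

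I expect the main obstacle to be the case $p = 3$. Away from $3$, the formal-group and finite-field arguments are clean and force the identity-component factor to be $1$, so the whole content is the bookkeeping of component groups, which is classical. At $p = 3$ the isogeny degree equals the residue characteristic, so $\phi$ need not be an isomorphism on formal groups, the reduction is always bad (Lemma~\ref{reduction}), and the extra factor of $3$ when $27 \mid k$ must be extracted by an explicit run of Tate's algorithm on $E_k$ and $E_{-27k}$ in the various residue classes of $k \bmod 3^j$. The safest way to pin this down is to tabulate, for $k$ sixth-power-free and $p=3$, the Kodaira types and Tamagawa numbers $c_3(E_k)$, $c_3(E_{-27k})$ as functions of $v_3(k) \in \{0,1,2,3,4,5\}$ and of $k/3^{v_3(k)} \bmod 3$, compute $c_3(\phi_k)$ directly from a $\Q_3$-point count (or from the known images of $\phi$ and $\hat\phi$ on the formal/component pieces), and check the stated formula case by case. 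This is routine but delicate, and it is where I would spend most of the effort; everything else reduces to the snake-lemma hexagon and Lang's theorem.
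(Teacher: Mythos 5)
Your overall strategy---filter $E(\Q_p)$ by the identity component and the formal group, apply the snake lemma, and compute the Selmer ratio on each graded piece---is a legitimate alternative route; it is essentially the approach of Schaefer's Lemma~3.8, which the paper cites for comparison, whereas the paper's actual proof integrates N\'eron differentials: it shows $\phi^*\omega_{-27k}=a\,\omega_k$ with $a=3$ exactly when $27\mid k$ and $a=1$ otherwise, and then the standard evaluation of $\int_{E(\Q_p)}|\omega|_p$ in terms of $c_p(E)$ gives the proposition in three lines. But as written your argument has two concrete defects in the bookkeeping. First, the formula you extract from the hexagon is wrong: for any homomorphism $\bar\phi$ of finite abelian groups one has $|\coker\,\bar\phi|/|\ker\bar\phi|=|\Phi_{-27k}|/|\Phi_k|=c_p(E_{-27k})/c_p(E_k)$ automatically, so the component-group ratio \emph{is} the Tamagawa ratio; your displayed formula counts it twice, and there is nothing further to ``cancel'' or ``telescope.'' The correct output of the snake lemma is simply $c_p(\phi_k)=\bigl(\text{identity-component ratio}\bigr)\cdot c_p(E_{-27k})/c_p(E_k)$. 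Relatedly, the identity you propose to use to pin down the component-group factor, $c_p(\phi_k)c_p(\hat\phi_k)=1/3$ for all finite $p$, is false: multiplicativity gives $c_p(\phi_k)c_p(\hat\phi_k)=c_p([3])=|3|_p^{-1}$, which is $1$ for $p\neq 3$ and $3$ for $p=3$ (the value $1/3$ occurs only at $p=\infty$).

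Second, and more seriously, the entire content of the proposition beyond the generic Tamagawa-ratio formula is the dichotomy at $p=3$---that the identity-component factor equals $3$ precisely when $27\mid k$---and this you assert rather than prove. Your fallback of tabulating Kodaira types and computing $c_3(\phi_k)$ ``directly from a $\Q_3$-point count'' in each residue class of $k$ would work in principle, but it is not carried out, and it amounts to independently computing the very quantity the proposition is designed to deliver. The mechanism the paper uses to extract the factor of $3$ is short: the explicit formula \eqref{phidef} gives $\phi^*(dX/Y)=dx/y$ on the ambient Weierstrass models, so the only issue is normalizing to N\'eron differentials; since $k$ is sixth-power-free, the model $Y^2=X^3-27k$ is non-minimal at $3$ exactly when $v_3(-27k)=3+v_3(k)\geq 6$, i.e.\ when $27\mid k$, in which case the N\'eron differential of $E_{-27k}$ is $3\,dX/Y$ up to a unit and one gets $\phi^*\omega_{-27k}=3\,\omega_k$, whence the extra factor $|3|_3^{-1}=3$. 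Your treatment of $p\neq 3$ (the isogeny has degree prime to $p$, hence is an isomorphism on formal groups, and Lang's theorem handles the special fibre) is essentially sound; the gap is entirely at $p=3$, together with the two incorrect auxiliary identities above.
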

\begin{proof}
  Let $\omega_k$ and
  $\omega_{-27k}$ denote N\'eron differentials on $E_k$ and $E_{-27k}$,
  respectively.  Then for some $b,b'\in\Q$, we have $\omega=b\cdot \frac{dx}{y}$ and
  $\omega_{-27k} = b'\cdot\frac{dX}{Y}$ on $E_k\colon y^2=x^3+k$ and $E_{-27k}\colon Y^2=X^3-27k$,
  respectively.  The model $y^2 = x^3 + k$ is minimal except if $k \equiv 16 \pmod {64}$, in which case the model given in the proof of Lemma \ref{reduction} is minimal.  In either case, we compute that $b=b'$ when $27\nmid k$ and
  $b'=3b$ when $27\mid k$.  Since
\[\phi^\ast\left(b\cdot\frac{dX}{Y}\right)=\frac{\displaystyle{b\cdot d\left(\frac{x^3+ 4k}{x^2}\right)}}{\frac{\displaystyle{y(x^3-8k)}}{\displaystyle{x^3}}}
=\frac{b\cdot\left(1-\displaystyle{\frac{8k}{x^3}}\right)dx}{\displaystyle{\frac{y(x^3-8k)}{x^3}}} = b\cdot\frac{dx}{y},\]
we have $\phi^\ast\omega_{-27k}=a \omega_{k}$, where $a=1$ if $27\nmid k$
and $a = 3$ if $27\mid k$.

We then compute 
$$\int_{\hat\phi(E_{k}(\Q_p))} |\omega_{-27k}|_p \,\,=\,\,\frac1{|E_{k}[\phi](\Q_p)|}\cdot\int_{E_{k}(\Q_p)}|\phi^\ast\omega_{-27k}|_p
\,\,=\,\,\frac1{|E_{k}[\phi](\Q_p)|}\cdot |a|_p\cdot \int_{E_{k}(\Q_p)}|\omega_{k}|_p.$$
Therefore,
$$c_p(\phi_k) = \frac{|(E_{-27k}(\Q_p)/\phi(E_{k}(\Q_p)))|}{|E_{k}[\phi](\Q_p)|} 
\,\,=\,\,
\frac{\int_{E_{-27k}(\Q_p)}|\omega_{-27k}|_p}{|a|_p\int_{E_{k}(\Q_p)}|\omega_{k}|_p}\,\,=\,\,
\frac1{\;\,|a|_p}\cdot\frac{c_p(E_{-27k})}{c_p(E_{k})}$$ as desired.
\end{proof}
We use Tate's algorithm \cite{tate} to compute the ratios of Tamagawa numbers in Proposition~\ref{tamag}, and hence the local Selmer ratios $c_p(\phi_k)$.  In particular, we find that the $c_p(\phi_k)$ are determined by congruence conditions on $k$:

\begin{proposition}\label{selmeratio}
Let $k\in\Z$ be sixth-power-free, and let $\chi_K$ denote the quadratic character attached to $K = \Q(\sqrt k)$.  If $p \neq 3$, then 
\[c_p(\phi_k) = 
\begin{cases}
3^{-\chi_K(p)} & \mbox{if $p \equiv 2$ $($mod $3)$ and  } v_p(4k) \in \{2 ,4\}; \\
1 & \mbox {otherwise}.
\end{cases}\]
If $p = 3$, write $k = 3^{v_3(k)}k_3$.  Then

\begin{equation}\label{selratioat3}
c_p(\phi_k) \;=\; \left\{\begin{array}{ll}
9 & \mbox{if } v_3(k) =5, \mbox{and  $k_3 \equiv 2$ {\rm{(mod 3)}};}\\ [.1in]
3  &  \mbox{if $v_3(k)=0$, and $k_3\equiv 5 \mbox{ or } 7$ {\rm{(mod 9)}}; or }\\
{} & \mbox{\quad $v_3(k) \in \{1, 4\}$, and $k_3 \equiv 2$ {\rm{(mod 3)}}; or}\\
{} & \mbox{\quad $v_3(k) = 3$, and $k_3 \not\equiv 2 \mbox{ or } 4$ {\rm{(mod 9)}}; or}\\
{} & \mbox{\quad $v_3(k) = 5$, and $k_3\equiv 1$ {\rm{(mod 3)}};}\\ [.1in]
\frac{1}{3}  &  \mbox{if $v_3(k) = 2$, and $k_3\equiv  1$ {\rm{(mod 3)}}; }\\ [.1in]

1 & \mbox{otherwise}.
\end{array}
\right.
\end{equation}
\end{proposition}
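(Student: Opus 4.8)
The plan is to reduce everything, via Proposition~\ref{tamag}, to a computation of ratios of Tamagawa numbers, and then to run Tate's algorithm~\cite{tate} on the two Weierstrass models $y^2=x^3+k$ and $Y^2=X^3-27k$. Concretely, for sixth-power-free $k$ Proposition~\ref{tamag} gives $c_p(\phi_k)=c_p(E_{-27k})/c_p(E_k)$ for $p\neq 3$, and $c_3(\phi_k)=c_3(E_{-27k})/c_3(E_k)$ times an explicit factor of $3$ when $27\mid k$. So it suffices to compute the valuations of the discriminants of these models (namely $v_p(-432k^2)$ and $v_p(-432\cdot 729\,k^2)$), run Tate's algorithm to read off the Kodaira types and component groups of $E_k$ and $E_{-27k}$, and divide. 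Since the algorithm's output at $p$ depends only on $k$ modulo a bounded power of $p$, this simultaneously establishes that each $c_p(\phi_k)$ is governed by congruence conditions on $k$, as asserted. I would organize the work in three cases: $p\geq 5$, $p=2$, and $p=3$.

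For $p\geq 5$ the two curves are on an equal footing: $v_p(-27k)=v_p(k)=:v\in\{0,1,2,3,4,5\}$, so $E_k$ and $E_{-27k}$ have the same Kodaira type at $p$, namely $I_0,II,IV,I_0^*,IV^*,II^*$ as $v$ runs through $0,\dots,5$. For types $I_0,II,II^*$ the Tamagawa number is $1$ for both curves, so the ratio is $1$. For type $I_0^*$ the component group is a subgroup of $(\Z/2)^2$ whose order is governed by the number of $\Q_p$-rational roots of a cubic, which is a nonzero scalar times $t^3+u$ for $E_k$ and $t^3-27u$ for $E_{-27k}$ with $u=k/p^v$; since $-1$ and $27$ are cubes in $\Q_p$ these cubics have the same number of $\Q_p$-roots, so the Tamagawa numbers agree and the ratio is $1$. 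For types $IV$ and $IV^*$ the component group is $\Z/3\Z$ carrying a quadratic Galois action, and Tate's algorithm identifies $c_p(E_k)=3$ with a square condition on $u$ which, as $v$ is even here, amounts to $\chi_K(p)=1$; similarly $c_p(E_{-27k})=3$ iff $-27k$, hence $-3k$, is a square in $\Q_p^\times$. As $-3\in(\Q_p^\times)^2$ exactly when $p\equiv 1\pmod 3$, the two square conditions coincide when $p\equiv 1\pmod 3$ (ratio $1$) and are complementary when $p\equiv 2\pmod 3$ (ratio $3^{-\chi_K(p)}$). This yields the stated formula for all odd $p\neq 3$.

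For $p=2$ one again has $v_2(\Delta)=4+2v_2(k)$ for the given models of both $E_k$ and $E_{-27k}$, since $-27$ is a $2$-adic unit, but now the naive model can fail to be minimal, so I would use the good-reduction model of Lemma~\ref{reduction} when $k\equiv 16\pmod{64}$ and check minimality in the remaining cases before running Tate's algorithm. Going through $v_2(k)\in\{0,\dots,5\}$, one finds that for $v_2(k)\in\{1,3,5\}$ and for $v_2(k)=4$ the two curves have equal Tamagawa numbers at $2$ (these are exactly the cases $v_2(4k)\notin\{2,4\}$), so $c_2(\phi_k)=1$; while for $v_2(k)\in\{0,2\}$ the curves have type $IV$ or $IV^*$ at $2$, with $c_2(E_k)=3$ precisely under a congruence on $k$ modulo $8$ that one recognizes as $\chi_K(2)=1$, $c_2(E_{-27k})=3$ under the complementary condition, and both Tamagawa numbers equal when $2$ ramifies in $K$. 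Thus $c_2(\phi_k)=3^{-\chi_K(2)}$, finishing the $p\neq 3$ case.

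The case $p=3$ is where the real work lies, and I expect it to be the main obstacle. Here the two curves genuinely diverge: $v_3(\Delta(E_k))=3+2v_3(k)$ while $v_3(\Delta(E_{-27k}))=9+2v_3(k)$, reflecting that $E_{-27k}\cong E_{3^6k}$ and that $\hat\phi\circ\phi=[3]$. Moreover every $E_k$ has wildly ramified additive reduction at $3$, so the Kodaira type is \emph{not} the one a naive reading of $v_3(\Delta)$ would suggest, and Tate's algorithm must be carried out by hand, keeping careful track of which Weierstrass model is $3$-minimal at each stage. I would run it for each $v_3(k)\in\{0,1,2,3,4,5\}$ and each residue of $k_3:=k/3^{v_3(k)}$ modulo $9$ (modulo $3$ suffices in some sub-cases), extract the Kodaira types and component groups of $E_k$ and $E_{-27k}$, form the quotient $c_3(E_{-27k})/c_3(E_k)$, and multiply by the correction factor $3$ of Proposition~\ref{tamag} when $v_3(k)\in\{3,4,5\}$; this produces the table~(\ref{selratioat3}). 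The delicate points—the places where the argument is laborious rather than conceptual—are (i) determining the minimal model and the chain of substitutions in Tate's algorithm at each stage, (ii) pinning down the component group and its Galois action at the wild prime $3$ so as to obtain the exact Tamagawa number and not merely the valuation of the minimal discriminant, and (iii) the sheer number of sub-cases, which is the real content of the $p=3$ analysis.
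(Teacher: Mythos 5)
Your proposal follows exactly the paper's route: the paper offers no written-out proof of Proposition~\ref{selmeratio} beyond the remark that one applies Tate's algorithm to compute the Tamagawa-number ratios appearing in Proposition~\ref{tamag}, which is precisely your plan, and your worked cases (the $I_0/II/IV/I_0^*/IV^*/II^*$ pattern for $p\geq 5$, the complementarity of the two type-$IV$ splitting conditions when $p\equiv 2\pmod 3$ via $-3\in(\Q_p^\times)^2\Leftrightarrow p\equiv 1\pmod 3$, the minimality caveat at $p=2$, and the extra factor of $3$ when $27\mid k$) are all correct. In fact your write-up supplies more detail than the paper does, and you correctly identify the $p=3$ case as the laborious part that must be ground out sub-case by sub-case.
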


From Proposition \ref{selmeratio}, we may easily deduce explicit formulas for the number of soluble $\SL_2(\Q_p)$-orbits on $V(\Q_p)_{4k}$ (see also Corollary~\ref{solubleorbits} below).  
One may go further and describe the soluble classes as a subgroup of $(K^*/K^{*3})_{N = 1}$.   We describe this below in the cases where $p \neq 3$.

\begin{proposition}\label{solsgood}
If $k \in \Z_p$ and $E_k$ has good reduction, then the Kummer map induces an isomorphism
\[E_k(\Q_p)/\hat\phi(E_{-27k}(\Q_p)) \cong (S_0^*/S_0^{*3})_{N = 1},\]
where $S_0$ is the ring of integers of $K = \Q_p[x]/(x^2 - k)$.
\end{proposition}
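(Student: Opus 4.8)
The plan is to prove Proposition~\ref{solsgood} by combining the local orbit classification from Proposition~\ref{intorb} with a dimension (i.e.\ order) count showing that the integral soluble orbits account for \emph{all} integral orbits when $E_k$ has good reduction. First I would observe that since $E_k$ has good reduction over $\Q_p$ and $p\neq 3$ (good reduction at $3$ being impossible by Lemma~\ref{reduction}), the isogeny $\phi$ and its dual also have good reduction, and so $E_k[\phi]$ extends to a finite flat group scheme over $\Z_p$; moreover $p\nmid k$, so $S_0 = \Z_p[x]/(x^2-k)$ is the maximal order in $K = \Q_p[x]/(x^2-k)$, which is an unramified (possibly split) quadratic \'etale $\Z_p$-algebra. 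In particular Proposition~\ref{intorb} applies with $S = S_0$, giving a bijection between $\SL_2(\Z_p)$-orbits on $V(\Z_p)_{4k}$ and $(S_0^*/S_0^{*3})_{N=1} \subset (K^*/K^{*3})_{N=1} \cong H^1(G_{\Q_p}, E_{-27k}[\hat\phi])$, with every rational orbit in this subgroup containing a unique integral orbit.

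The key step is then to identify the subgroup of soluble classes inside $(K^*/K^{*3})_{N=1}$ with exactly the image of $\partial\colon E_k(\Q_p) \to H^1(G_{\Q_p}, E_{-27k}[\hat\phi])$, i.e.\ with $(S_0^*/S_0^{*3})_{N=1}$. One containment is formal: by the good-reduction hypothesis, the integral orbit attached to a point $(x,y)\in E_k(\Q_p)$ corresponds under Remark~\ref{kummer} to $\delta = y-\tau$ (or $\pm 1/2\tau$ in the $\phi$-torsion case), and one checks using the smooth N\'eron model that the $C_f$ attached to a \emph{soluble} class has a $\Z_p$-point reducing to a smooth point, forcing the associated triple $(I,\delta,s)$ to be integral; hence $\im\partial \subseteq (S_0^*/S_0^{*3})_{N=1}$. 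For the reverse inclusion I would count: $|\im\partial| = |E_k(\Q_p)/\hat\phi(E_{-27k}(\Q_p))| = c_p(\hat\phi_k)\cdot |E_{-27k}[\hat\phi](\Q_p)|$, and by Proposition~\ref{tamag} (or directly, since good reduction forces all Tamagawa numbers to be $1$) $c_p(\hat\phi_k) = 1$, so $|\im\partial| = |E_{-27k}[\hat\phi](\Q_p)|$, which is $3$ or $1$ according to whether $-3k$ is a square in $\Q_p$. On the other side, a direct computation with the unramified quadratic algebra $S_0$ shows $|(S_0^*/S_0^{*3})_{N=1}|$ equals the same quantity: when $p\equiv 1\pmod 3$ the group $S_0^*/S_0^{*3}$ has order $9$ (split case) or $3$ (inert case, since $\F_{p^2}^*$ has order divisible by $3$) and the norm kernel has order $3$ or $1$ correspondingly, matching $E_{-27k}[\hat\phi](\Q_p)$; when $p\equiv 2\pmod 3$ one has $3\nmid p^2-1$ only in the split case is false — more carefully, $S_0^*/S_0^{*3}$ is trivial in the inert case and has order $3$ in the split case, again matching. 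Since $\im\partial$ is contained in a group of the same finite order, the inclusion is an equality.

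The step I expect to be the main obstacle is making the first containment genuinely rigorous, i.e.\ the claim that a soluble class (a class whose covering curve $C_f$ has a $\Q_p$-point) must in fact lie in the \emph{integral} subgroup $(S_0^*/S_0^{*3})_{N=1}$ rather than merely in $(K^*/K^{*3})_{N=1}$. The clean way is to argue via the N\'eron model: the covering $C_f\to E_k$ is itself a twist of $\hat\phi$ with good reduction, so a $\Q_p$-point of $C_f$ spreads out to a $\Z_p$-point of a smooth model, and pushing forward gives a $\Z_p$-point of $E_{-27k}$ whose image under $\partial$ is, by functoriality of the Kummer map for the good-reduction N\'eron models, an integral (unit) class; equivalently, one invokes Proposition~\ref{intorb}'s statement that the rational orbit of $\delta$ contains an integral orbit precisely when $[\delta]\in(S_0^*/S_0^{*3})_{N=1}$, together with the fact — provable from Proposition~\ref{solubility} and a reduction-mod-$p$ argument — that solubility over $\Q_p$ forces the existence of an integral representative of discriminant $4k$ (using $p\nmid 4k$ so that reduction mod $p$ preserves the discriminant valuation). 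Once that integrality of soluble classes is in place, the counting argument above closes the proof; alternatively, one can avoid the count entirely by noting that both sides are subgroups of $(K^*/K^{*3})_{N=1}$, that $\im\partial\subseteq(S_0^*/S_0^{*3})_{N=1}$, and that the reverse containment follows because every class in $(S_0^*/S_0^{*3})_{N=1}$ has an integral binary cubic representative which, having good reduction, visibly acquires a smooth $\F_p$-point and hence by Hensel a $\Z_p$-point, making it soluble.
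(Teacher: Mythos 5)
The paper gives no proof of this proposition at all --- it simply cites Cassels and Gross--Parson --- so your self-contained argument is necessarily a different route, and for $p>3$ it is essentially complete and correct: Proposition~\ref{intorb} identifies the integral orbits with $(S_0^*/S_0^{*3})_{N=1}$, the containment $\im\,\partial\subseteq(S_0^*/S_0^{*3})_{N=1}$ follows from integrality, and your final observation (an integral cubic of unit discriminant reduces to a smooth plane cubic over $\F_p$, which has a rational point by Hasse, which Hensel lifts) gives the reverse containment with no counting needed. For the containment you flag as the main obstacle, there is a two-line version for $p>3$ that avoids N\'eron models: for $(x,y)\in E_k(\Q_p)$ with $x,y\in\Z_p$ one has $N(y-\tau)=y^2-k=x^3$, and since $p\nmid 2k$ at most one of $y\pm\tau$ is a non-unit; hence every valuation of $\delta=y-\tau$ lies in $3\Z$ and $\delta$ is a unit times a cube.

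There are, however, two concrete problems. The genuine gap is at $p=2$: your opening claim that good reduction forces $p\nmid k$ (so that $\Z_p[x]/(x^2-k)$ is already the maximal order) is false there, since by Lemma~\ref{reduction} good reduction at $2$ requires $k\equiv 16\pmod{64}$, i.e.\ $v_2(k)=4$. Then $\Z_2[x]/(x^2-k)$ has index $4$ in the maximal order $S_0$ of $K=\Q_2(\sqrt{k/16})$ (still unramified or split, as $k/16\equiv 1\pmod 4$), Proposition~\ref{intorb} does not apply as stated, the integral forms have $v_2(\Delta)=6$, and the ``unit discriminant, hence smooth reduction'' step collapses. The statement remains true at $2$ --- the order count $|(S_0^*/S_0^{*3})_{N=1}|=|E_{-27k}[\hat\phi](\Q_2)|$ still holds for the maximal order, and the unramifiedness of the Kummer image can be checked on the minimal model --- but your proof as written covers only $p>3$. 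The second, smaller, issue is that your case analysis for $p\equiv 2\pmod 3$ is backwards: since $3\mid p^2-1$ but $3\nmid p-1$, the group $S_0^*/S_0^{*3}$ has order $3$ in the \emph{inert} case and is trivial in the split case, and correspondingly $E_{-27k}[\hat\phi](\Q_p)$ has order $3$ exactly when $K=\Q_p(\sqrt{-3})$ is the unramified quadratic field. The two sides still agree after the correction, so the conclusion of your count survives, but the computation as stated is wrong.
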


\begin{proof}
See \cite[Lemma~4.1]{casVIII} or \cite[Lemma~6]{grossparson}. 

\end{proof}

\begin{proposition}\label{solsbad}
Assume $p \neq 3$ and that $E_k/\Q_p$ has bad reduction.  Then the natural map 
\[j\colon  E_k[\phi](\Q_p) \to E_k(\Q_p)/\hat\phi(E_{-27k}(\Q_p))\] 
is an isomorphism.  
\end{proposition}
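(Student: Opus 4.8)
The plan is to count both sides of the desired isomorphism and then show the natural map $j$ is injective, whence it is automatically an isomorphism. We know $E_k[\phi](\Q_p)$ has order $3$ if $-3k \in \Q_p^{*2}$ (equivalently $\hat K \cong \Q_p \times \Q_p$) and order $1$ otherwise; here we must be careful because $E_k[\phi]$ is the kernel of $\phi$, whose Cartier dual is $E_{-27k}[\hat\phi]$, so it is governed by $\hat K = \Q_p[x]/(x^2+27k)$ rather than $K$. On the other side, the group $E_k(\Q_p)/\hat\phi(E_{-27k}(\Q_p))$ has order $3 \cdot c_p(\hat\phi_k)$ since $c_p(\hat\phi_k) = |E_k(\Q_p)/\hat\phi(E_{-27k}(\Q_p))| / |E_{-27k}[\hat\phi](\Q_p)|$ and $|E_{-27k}[\hat\phi](\Q_p)|$ is $3$ or $1$ as $-3k$ is or is not a square. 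So the first task is a local-ratio computation: I would invoke Proposition~\ref{tamag} (applied to $\hat\phi$, i.e.\ with $k$ replaced by $-27k$, noting $-27\cdot(-27k) = 3^6 k$ and the identification $E_{3^6k} = E_k$) together with Proposition~\ref{selmeratio}, to read off $c_p(\hat\phi_k)$ when $p\neq 3$ and $E_k$ has bad reduction. The upshot I expect is that $c_p(\hat\phi_k) = 1/c_p(\phi_k)$, and that in the bad-reduction case with $p \neq 3$ the relevant Tamagawa contributions and the square class of $-3k$ conspire so that $|E_k[\phi](\Q_p)| = |E_k(\Q_p)/\hat\phi(E_{-27k}(\Q_p))|$, i.e.\ both sides have the same order.

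Granting the cardinality match, it remains to prove $j$ is injective, i.e.\ that no nonzero $P \in E_k[\phi](\Q_p)$ lies in $\hat\phi(E_{-27k}(\Q_p))$. The cleanest route is via the explicit description of the Kummer map in Remark~\ref{kummer}: a nonzero $\phi$-torsion point of $E_k$ is $P = (0, \pm\sqrt k)$, with $\partial(P) = \pm 1/(2\tau) \in (K^*/K^{*3})_{N=1}$ where $\tau^2 = k$ in $K$. I would argue that this class is nontrivial in $(K^*/K^{*3})_{N=1}$, and moreover that it does not lie in the image of the Kummer map restricted to $E_{-27k}(\Q_p)$ — equivalently, using Proposition~\ref{solubility}, that the curve $C_f \colon z^3 = f(x,y)$ attached to $f=[k,0,3,0]$... wait, that's the trivial orbit; rather, for the form corresponding to $\delta = 1/(2\tau)$, namely $f = [\,?\,]$ computed via Remark~\ref{explicitcubic} (with $\delta = a + b\tau$, so here $a=0$, $b = 1/(2k)$ after clearing, giving $f$ proportional to $[0, 3/2, 0, 1/(2k)]$, which one scales to an integral form), has no $\Q_p$-point. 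Concretely: if $E_k$ has bad (hence additive, since $j=0$) reduction at $p \neq 3$, then $E_k(\Q_p)$ has no $p$-divisibility obstruction but the formal/torsion structure forces the $\phi$-torsion class to be detected ramified at $p$, while $\hat\phi(E_{-27k}(\Q_p))$ consists of classes that are... I would instead directly use the description of soluble classes: by the not-yet-proven analogue of Proposition~\ref{solsgood} for bad reduction, or more simply by comparing $E_k(\Q_p)/\hat\phi(E_{-27k}(\Q_p))$ with its counterpart at the level of $E_0$ (the connected component), the image $\hat\phi(E_{-27k}(\Q_p))$ meets $E_k[\phi](\Q_p)$ trivially because $E_k[\phi](\Q_p) \cap E_{k,0}(\Q_p) = 0$ in the additive-reduction case (the nontrivial $\phi$-torsion point reduces to a non-identity component or to the singular point), whereas $\hat\phi$ of the identity component lands in the identity component.

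The main obstacle I anticipate is the injectivity step, specifically pinning down the intersection $\hat\phi(E_{-27k}(\Q_p)) \cap E_k[\phi](\Q_p)$ uniformly across the various Kodaira types occurring for $j=0$ curves with $p\neq 3$ (types $\mathrm{II}, \mathrm{III}, \mathrm{IV}, \mathrm{IV}^*, \mathrm{III}^*, \mathrm{II}^*$, depending on $v_p(k) \bmod 6$). For this I would organize the argument by the value of $v_p(k) \in \{1,2,3,4,5\}$, use Tate's algorithm to identify the component group $E_k(\Q_p)/E_{k,0}(\Q_p)$ and where the $\phi$-torsion point sits in it, and note that $\hat\phi$ is an isogeny of degree $3$ so its image has index $3$ in $E_k(\Q_p)$; combining the component-group data with the order count from the first paragraph forces $j$ to be an isomorphism. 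Alternatively — and this may be the slickest finish — once both groups are known to have equal finite order, injectivity of $j$ is equivalent to surjectivity, and surjectivity can be checked by exhibiting, for each coset, a $\Q_p$-point: the nontrivial $\phi$-torsion point itself already gives a coset, and a dimension/counting argument (the order is at most $3$ in the bad-reduction case when $-3k\notin\Q_p^{*2}$, forcing the order to be exactly $1$ on both sides, and exactly $3$ otherwise with the $\phi$-torsion point spanning it) closes the gap. I would present the counting argument first and the component-group bookkeeping as the verification that $E_k[\phi](\Q_p)$ is not killed by $\hat\phi$.
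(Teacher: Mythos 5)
Your overall shape (match cardinalities, then prove injectivity) could in principle be made to work, but as written there are two genuine gaps. First, the cardinality bookkeeping starts from a wrong identification: $E_k[\phi]$ is generated by $(0,\sqrt k)$, so $|E_k[\phi](\Q_p)|=3$ if and only if $k\in\Q_p^{*2}$, not $-3k\in\Q_p^{*2}$; the algebra $\hat K$ governs $H^1(G_{\Q_p},E_k[\phi])$, not the group of $\Q_p$-points of $E_k[\phi]$. With your condition the proposed identity $|E_k[\phi](\Q_p)|=c_p(\hat\phi_k)\,|E_{-27k}[\hat\phi](\Q_p)|$ already fails for $v_p(k)=2$, $p\equiv 2\pmod 3$, $k/p^2$ a nonsquare unit (there the quotient has order $1$ but $-3k$ is a square). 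Second, and more seriously, none of your routes to injectivity closes. Equality of orders cannot force $j$ to be nonzero: a homomorphism $\Z/3\to\Z/3$ may perfectly well be zero, so ``injectivity is equivalent to surjectivity'' does not discharge the obligation, and the $\phi$-torsion point only ``gives a coset'' once you already know it is not in the image of $\hat\phi$ --- which is the very thing to be proved. The component-group route only controls $\hat\phi(E_{-27k,0}(\Q_p))\subset E_{k,0}(\Q_p)$; a point on a non-identity component of $E_{-27k}$ can land on the non-identity component containing $(0,\sqrt k)$ (for types $\mathrm{IV}$ and $\mathrm{IV}^*$ the component group can have order $3$ and is then generated by exactly this point), so ruling this out sends you back to the case-by-case Tamagawa computation you hoped to avoid.

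The one idea that makes the proof short, and which your proposal misses, is that under bad reduction at $p\neq 3$ one has $E_k[3](\Q_p)=E_k[\phi](\Q_p)$: the six remaining $3$-torsion points are $(\sqrt[3]{-4k},\sqrt{-3k})$, and by Lemma~\ref{reduction} they live in ramified extensions of $\Q_p$. This single fact yields everything at once. Injectivity: if $\hat\phi(Q)=P\in E_k[\phi](\Q_p)$, then $3Q=\phi(\hat\phi(Q))=\phi(P)=0$, so $Q\in E_{-27k}[3](\Q_p)=E_{-27k}[\hat\phi](\Q_p)$ and hence $P=\hat\phi(Q)=0$. Surjectivity: $E_k(\Q_p)/3E_k(\Q_p)$ surjects onto $E_k(\Q_p)/\hat\phi(E_{-27k}(\Q_p))$, and for $p\neq 3$ the formal-group computation gives $|E_k(\Q_p)/3E_k(\Q_p)|=|E_k[3](\Q_p)|=|E_k[\phi](\Q_p)|$, so the injection is forced to be onto. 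This bypasses Tate's algorithm and Proposition~\ref{selmeratio} entirely.
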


\begin{proof}
We first claim that $E_k[3](\Q_p)  = E_k[\phi](\Q_p)$.  Indeed, the six other 3-torsion points on $E_k(\bar \Q_p)$ are $(\sqrt[3]{-4k}, \sqrt{-3k})$, for the six possible choices of roots in this expression.  It follows from the bad reduction of $E_k$ and Lemma \ref{reduction} that these points are defined over ramified extensions of $\Q_p$,  so they do not lie in $E_k(\Q_p)$, proving the claim.  

Next we show that $j$ is injective.  Indeed, if $P \in E_k[\phi](\Q_p)$ and $Q \in E_{-27k}(\Q_p)$ satisfy $\hat\phi(Q) = P$, then $Q \in E_{-27k}[3](\Q_p) = E_{-27k}[\hat\phi](\Q_p)$, where the equality follows from the above claim applied to $E_{-27k}$.  Thus, $P = 0$ and $j$ is injective.  

Finally, it suffices to show that $|E_k(\Q_p)/\hat\phi(E_{-27k}(\Q_p))| \leq |E_k[\phi](\Q_p)|$.  Note that the group $E_k(\Q_p)/3E_k(\Q_p)$ surjects onto $E_k(\Q_p)/\hat\phi(E_{-27k}(\Q_p))$.  Since $p \neq 3$, an argument using the formal group \cite[Lemma~12.3]{bg} shows that $E_k(\Q_p)/3E_k(\Q_p)$ has size $|E_k[3](\Q_p)|$, which is equal to $|E_k[\phi](\Q_p)|$, again by our claim above.  Therefore, we indeed have 
\[|E_k(\Q_p)/\hat\phi(E_{-27k}(\Q_p))| \leq  |E_k[3](\Q_p)| = |E_k[\phi](\Q_p)|,\] as desired.    
\end{proof}

\begin{corollary}\label{solubleorbits}
If $p \neq 3$, then the number of soluble orbits in $V(\Q_p)$ of discriminant $4k$ is equal to 
\[\begin{cases}
|E_{-27k}[\hat\phi](\Q_p)| & \mbox{if } E_k/\Q_p \mbox{ has good reduction,}\\
|E_k[\phi](\Q_p)| & \mbox{if } E_k/\Q_p  \mbox{ has bad reduction}. 
\end{cases}\]
\end{corollary}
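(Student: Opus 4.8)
The plan is to combine Theorem~\ref{H1bij}, Corollary~\ref{soluble}, and the two structural descriptions of the soluble classes just established (Propositions~\ref{solsgood} and \ref{solsbad}). By Corollary~\ref{soluble}, the set of $\SL_2(\Q_p)$-orbits on $V(\Q_p)^{\sol}_{4k}$ is in bijection with the group $E_k(\Q_p)/\hat\phi(E_{-27k}(\Q_p))$, so the number of soluble orbits of discriminant $4k$ equals the order of this group. Thus everything reduces to computing $|E_k(\Q_p)/\hat\phi(E_{-27k}(\Q_p))|$ in the two reduction cases, and this is exactly what Propositions~\ref{solsgood} and~\ref{solsbad} do.

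In the good reduction case, Proposition~\ref{solsgood} gives an isomorphism $E_k(\Q_p)/\hat\phi(E_{-27k}(\Q_p)) \cong (S_0^*/S_0^{*3})_{N=1}$, where $S_0$ is the ring of integers of $K = \Q_p[x]/(x^2-k)$. I would then identify the right-hand side with $E_{-27k}[\hat\phi](\Q_p)$: by Proposition~\ref{cassels} (applied over $F = \Q_p$), $E_{-27k}[\hat\phi](\Q_p) \cong (\Res^K_{\Q_p}\mu_3)_{N=1}(\Q_p) = (\mu_3(K))_{N=1}$, and since $p \neq 3$ (or more precisely since $E_k$ has good reduction, which for $p=3$ never happens by Lemma~\ref{reduction}) the residue characteristic is prime to $3$, so $\mu_3(S_0) = \mu_3(K)$ and the cube-power quotient $(S_0^*/S_0^{*3})_{N=1}$ has the same order as $(\mu_3(S_0))_{N=1}$ — both are the $3$-torsion, respectively the cokernel of cubing, on the unit group of a finite product of unramified local fields, and these have equal order because cubing on such a unit group has finite index equal to the size of its kernel (the value groups contribute nothing since $3 \nmid [\text{residue field}]^\times$ is false in general, so one argues via the snake lemma on $1 \to S_0^* \to S_0^* \to S_0^*/S_0^{*3} \to 1$ together with the fact that $S_0^*$ is, up to finite index prime to $3$, a pro-$p$ group). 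Hence the count is $|E_{-27k}[\hat\phi](\Q_p)|$.

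In the bad reduction case with $p \neq 3$, Proposition~\ref{solsbad} states directly that the natural map $j\colon E_k[\phi](\Q_p) \to E_k(\Q_p)/\hat\phi(E_{-27k}(\Q_p))$ is an isomorphism, so the number of soluble orbits is $|E_k[\phi](\Q_p)|$, completing the corollary. The main obstacle, as indicated above, is the bookkeeping in the good reduction case: one must be careful that the finite index of cubing on $S_0^*$ equals the order of its $3$-torsion, which is a clean statement for unit groups of products of local fields of residue characteristic $\neq 3$ but deserves an explicit line (it follows since $S_0^* \cong (\text{finite group of order prime to }3)\times(\text{residue units})\times\Z_p^{\,d}$, on which cubing is an automorphism of the pro-$p$ part, so kernel and cokernel of cubing on $S_0^*$ both coincide with the $3$-torsion of the tame part, which is $\mu_3(S_0)$). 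Everything else is a direct citation of the preceding results.
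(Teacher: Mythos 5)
Your route is the one the paper intends: the corollary is stated as an immediate consequence of Corollary~\ref{soluble} together with Propositions~\ref{solsgood} and~\ref{solsbad}, and your treatment of the bad-reduction case is a correct direct citation. The good-reduction case, however, has a real (if small) gap exactly where you flagged "the main obstacle," and your proposed fix does not quite close it. You need the counting identity $|(S_0^*/S_0^{*3})_{N=1}| = |\mu_3(K)_{N=1}|$, but what your parenthetical actually establishes is the weaker statement $|S_0^*/S_0^{*3}| = |\mu_3(S_0)|$ about the \emph{full} groups. That does not transfer to the norm-one pieces on its own: $(S_0^*/S_0^{*3})_{N=1}$ is the kernel of the induced norm on the cokernel of cubing, $\mu_3(K)_{N=1}$ is the kernel of the norm on the kernel of cubing, and there is no natural map identifying them --- indeed the obvious candidate $\mu_3(S_0)\to S_0^*/S_0^{*3}$ is the \emph{zero} map whenever $p\equiv 1 \pmod 9$ (since then $\mu_3\subset \mu_{p-1}^3$), so any argument must be a genuine count rather than an identification.

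To repair it: good reduction forces $S_0$ to be an unramified \'etale $\Z_p$-algebra, so the norm $N\colon S_0^*\to\Z_p^*$ is surjective, and $N\colon\mu_3(K)\to\mu_3(\Q_p)$ is also surjective (check the split and inert cases separately). Writing $U_1=(S_0^*)_{N=1}$ and applying the snake lemma to cubing on $1\to U_1\to S_0^*\to\Z_p^*\to 1$, these two surjectivities give $(S_0^*/S_0^{*3})_{N=1}\cong U_1/U_1^3$; then your kernel-equals-cokernel principle applied to $U_1$ itself (which is again a finite prime-to-$p$ group times a pro-$p$ group, with $p\neq 3$) gives $|U_1/U_1^3|=|U_1[3]|=|\mu_3(K)_{N=1}|$, as required. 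Alternatively, you can bypass Proposition~\ref{solsgood} entirely: for $p\neq 3$, good reduction of $E_k$ implies good reduction of the isogenous curve $E_{-27k}$, so both Tamagawa numbers equal $1$ and Proposition~\ref{tamag} (applied to $\hat\phi_k=\phi_{-27k}$) yields $c_p(\hat\phi_k)=1$, which is precisely the assertion $|E_k(\Q_p)/\hat\phi(E_{-27k}(\Q_p))|=|E_{-27k}[\hat\phi](\Q_p)|$.
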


\begin{remark}\label{3torsion}
{\em
At the start of the proof of Proposition~\ref{solsbad},
we noted that $E_k[3]$ consists of the three points of $E_k[\phi]$
together with the six points $(x,y)$ with $x^3 = -4k$ and $y^2 = -3k$.
If such a point is rational then $(k,x,y) = (-432m^6, 12m^2, 36m^3)$
for some nonzero~$m$.  In this case our elliptic curve $E_k \cong E_{-432}$
is isomorphic with the Fermat cubic $X^3 + Y^3 = Z^3$ (which is also
isomorphic with the modular curve ${\rm X}_0(27)$), and the isogenous curve
$E_{27k} \cong E_{16}$ is isomorphic with $XY(X+Y)=Z^3$.
These curves have good reduction at primes other than~$3$;
in particular, they satisfy the condition of Lemma~\ref{reduction}
for good reduction at~$2$, with minimal models
${y'}^2 + y' = {x'}^3 - 7$ and ${y'}^2 + y' = {x'}^3$ respectively.
}
\end{remark}

\subsection{Orbits over $\R$ or $\C$}\label{R}
If $F  = \R$ or $\C$, then every binary cubic form $f \in V(F)$ is reducible, so by Corollary \ref{soluble}, there is a unique $\SL_2(F)$-orbit of binary cubic forms of discriminant $4k$.  This agrees with the fact that $E_k(F)/\hat\phi(E_{-27k}(F))$ is always trivial.  We conclude that $c_\infty(\phi_k) = 1/3$ if $F = \C$, and if $F = \R$, we have:
\[c_\infty(\phi_k) = 
\begin{cases}
\frac{1}{3} & \mbox{if $k > 0$}; \\
1 & \mbox{if $k< 0$}.
\end{cases}\]

\section{$\phi$-Selmer groups and locally soluble orbits over a global field}

We let $F$ now be a global field of characteristic not 2 or 3. If $\varphi \colon A \to A'$ is an isogeny of abelian varieties over $F$, then the {\it $\varphi$-Selmer group} 
\[\Sel_\varphi(A) \subset H^1(G_F, A[\varphi])\] is the subgroup consisting of classes that are locally in the image of the Kummer map 
\[\partial_v \colon A'(F_v) \la H^1(G_{F_v}, A[\varphi])\]
for every place $v$ of $F$.  Equivalently, these are the classes locally in the kernel of the map   
\[H^1(G_{F_v}, A[\varphi]) \to H^1(G_{F_v}, A)[\varphi],\]
for every place $v$ of $F$, i.e., the classes corresponding to $\phi$-coverings having a rational point over $F_v$ for every place $v$.


Now let $V(F)^\ls \subset V(F)$ denote the set of {\it locally soluble} binary cubic forms, i.e.\ those $f(u,v) \in V(F)$ such that the equation $z^3 = f(u,v)$ has a non-zero solution over $F_v$ for every place $v$ of $F$.  Then the following result follows immediately from Corollary \ref{soluble}. 

\begin{theorem}\label{Qcor}
  Let $k \in F^*$. Then  there is a bijection between the $\SL_2(F)$-orbits on
  $V(F)^\ls$ having discriminant~$4k$ and the elements of $\Sel_{\hat\phi}(E_{-27k})$ corresponding to the isogeny $\hat\phi_k\colon E_{-27k}\to E_{k}.$  Under this bijection, the identity element of $\Sel_{\hat\phi}(E_{-27k})$ corresponds to the unique $\SL_2(F)$-orbit of reducible binary cubic forms of
  discriminant $4k$, namely the orbit of $f(x,y) = kx^3 + 3xy^2.$  Moroever, the stabilizer in $\SL_2(F)$ of any $f \in V(F)^\ls_{4k}$ is isomorphic to $E_{-27k}[\hat\phi](F)$. 
\end{theorem}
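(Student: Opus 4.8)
The plan is to deduce Theorem~\ref{Qcor} directly from Corollary~\ref{soluble} by patching together the local and global pictures, exactly as the phrase ``follows immediately from Corollary~\ref{soluble}'' in the text suggests. The bijection of Corollary~\ref{soluble} over $F$ itself identifies $\SL_2(F)$-orbits on $V(F)^\sol_{4k}$ with $E_k(F)/\hat\phi(E_{-27k}(F))$, and the analogous statement over each completion $F_v$ identifies $\SL_2(F_v)$-orbits on $V(F_v)^\sol_{4k}$ with $E_k(F_v)/\hat\phi(E_{-27k}(F_v))$. The first step is to recall, via Theorem~\ref{H1bij} and Proposition~\ref{cassels}, that $H^1(G_F, E_{-27k}[\hat\phi])$ is in $\SL_2(F)$-equivariant bijection with $\SL_2(F)$-orbits on $V(F)_{4k}$, compatibly with restriction to $G_{F_v}$. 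Under this identification, Proposition~\ref{solubility} says that an orbit is soluble over $F_v$ (i.e.\ the curve $C_f\colon z^3 = f(u,v)$ has an $F_v$-point) precisely when the corresponding class in $H^1(G_{F_v}, E_{-27k}[\hat\phi])$ lies in the image of the Kummer map $\partial_v$. Hence an orbit on $V(F)_{4k}$ whose class lies in $H^1(G_F, E_{-27k}[\hat\phi])$ is \emph{locally soluble} at every place — i.e.\ $f \in V(F)^\ls_{4k}$ — if and only if its class lies in $\Sel_{\hat\phi}(E_{-27k})$, which by definition is the subgroup of classes locally in the image of every $\partial_v$.

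The second step is to check that the two discriminant-$4k$ sets match as \emph{sets of orbits} and not merely that the underlying cohomology classes match: a class $\delta \in H^1(G_F, E_{-27k}[\hat\phi])$ corresponds to a single $\SL_2(F)$-orbit by Theorem~\ref{H1bij}, so $V(F)^\ls_{4k}/\SL_2(F)$ is literally in bijection with $\Sel_{\hat\phi}(E_{-27k})$. The identity element of $\Sel_{\hat\phi}(E_{-27k})$ is the trivial cohomology class, which by Corollary~\ref{soluble} (with $k'=-27k$, i.e.\ applying the reducible-orbit statement and Remark~\ref{explicitcubic} with $\delta = a = 1$, $b = 0$) corresponds to the orbit of the reducible form $f(x,y) = kx^3 + 3xy^2$; this is the unique reducible orbit of discriminant $4k$ because all other classes in $\left(K^*/K^{*3}\right)_{N=1}$ give irreducible $f$ (equivalently, $C_f$ with no evident rational point besides those forced by solubility). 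The stabilizer statement is already contained verbatim in Theorem~\ref{H1bij}: the stabilizer in $\SL_2(F)$ of any $f \in V(F)_{4k}$ — in particular any $f \in V(F)^\ls_{4k}$ — is isomorphic to $E_{-27k}[\hat\phi](F)$.

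The one genuinely substantive point, and the step I expect to require the most care, is the compatibility of the bijection of Theorem~\ref{H1bij} with base change from $F$ to $F_v$: one must know that the isomorphism $H^1(G_F, E_{-27k}[\hat\phi]) \cong \left(K^*/K^{*3}\right)_{N=1}$ of Proposition~\ref{cassels} and the orbit bijection of Corollary~\ref{bij} are functorial in the base field, so that ``soluble over $F_v$'' on the binary-cubic side coincides with ``in the image of $\partial_v$'' on the cohomology side place by place. This follows because all the maps involved — the duality pairing $E_{-27k}[\hat\phi] \times E_k[\phi] \to \mu_3$, the embedding $\iota$ into $\Res^K_F\mu_3$, and the Kummer connecting maps — are defined at the level of $F$-group schemes and hence commute with restriction to any $G_{F_v}$; the set $V(F_v)^\sol_{4k}$ is by construction the set of $f$ for which $C_f$ acquires an $F_v$-point, so Proposition~\ref{solubility} applied over $F_v$ gives exactly the matching. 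Once this functoriality is in place, intersecting the local solubility conditions over all $v$ gives $V(F)^\ls_{4k}$ on the one side and $\Sel_{\hat\phi}(E_{-27k})$ on the other, completing the proof.
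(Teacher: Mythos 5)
Your argument is correct and is exactly the route the paper takes: the paper states that Theorem~\ref{Qcor} ``follows immediately from Corollary~\ref{soluble},'' and your write-up simply makes explicit the place-by-place application of Proposition~\ref{solubility} together with the compatibility of the bijections of Theorem~\ref{H1bij} and Corollary~\ref{bij} under base change $F \to F_v$, which is the only point requiring care. Nothing further is needed.
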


\section{Existence of integral orbits}\label{loc}

We now specialize to the case where $F = \Q$.  The main goal of this section is to prove the following theorem. 

\begin{theorem}\label{globalint}
Let $k \in 3\Z$.  Then every locally soluble orbit for the action of $\SL_2(\Q)$ on $V(\Q)_{4k}$ has an integral representative, i.e., contains an element of $V(\Z)_{4k}$.  
\end{theorem}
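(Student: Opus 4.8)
The plan is to work locally at every prime and invoke the local-to-global principle for lattices. By Theorem~\ref{Qcor}, a locally soluble orbit for $\SL_2(\Q)$ on $V(\Q)_{4k}$ corresponds to a class in $\Sel_{\hat\phi}(E_{-27k})$, hence to a class $\delta \in (K^*/K^{*3})_{N=1}$, where $K = \Q[z]/(z^2-k)$, that is everywhere locally soluble. To build an integral representative it suffices, by Theorem~\ref{hcl1} applied with $D=\Z$ (and the local-global principle for ideals used in the proof of Proposition~\ref{stabs}, namely that a fractional $S$-ideal is determined by its completions and any compatible system of local ideals glues to a global one), to exhibit for each prime $p$ a valid triple $(I_p,\delta,s)$ over $\Z_p$ — i.e.\ a fractional ideal $I_p$ of $S_p := \Z_p[z]/(z^2-k)$ with $I_p^3 \subset \delta S_p$ and $N(I_p) = sD$ in $\Q_p$ for a fixed choice of cube root $s$ of $N(\delta)$ — such that $I_p = S_p$ for all but finitely many $p$. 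Gluing the $I_p$ produces a global valid triple $(I,\delta,s)$, and the binary cubic form $f_I \in V(\Z)_{4k}$ attached to it by Theorem~\ref{hcl1} lies in the given rational orbit.

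So the task reduces to the following local statement: for each prime $p$, every locally soluble class $\delta_p \in (K_p^*/K_p^{*3})_{N=1}$ with $K_p = \Q_p[z]/(z^2-k)$ arises from an integral $\SL_2(\Z_p)$-orbit on $V(\Z_p)_{4k'}$ for the appropriate $k'$ — equivalently, the soluble classes computed in Section~4 all lie in the image of the integral orbits. For $p \nmid 6k$ this is immediate: $E_k$ has good reduction (Lemma~\ref{reduction}), $S_p$ is the maximal order, and Proposition~\ref{intorb} together with Proposition~\ref{solsgood} shows the soluble classes are exactly $(S_p^*/S_p^{*3})_{N=1}$, which is precisely the image of the integral orbits, with $I_p = S_p$. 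For $p \mid k$ with $p \neq 3$, or $p = 2$, one has bad reduction, and Proposition~\ref{solsbad} identifies the group $E_k(\Q_p)/\hat\phi(E_{-27k}(\Q_p))$ with $E_k[\phi](\Q_p)$; here I would write down the at-most-three soluble classes explicitly using Remarks~\ref{explicitcubic} and~\ref{kummer} (the identity corresponds to $f = [k,0,3,0]$, and the nontrivial classes, when $-3k \in \Q_p^{*2}$, to the images of $(0,\pm\sqrt k)$) and check directly that each has an integer-matrix representative in $V(\Z_p)_{4k}$, adjusting the ideal $I_p$ and the cube root by scaling.

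The crux — and the reason the hypothesis $k \in 3\Z$ appears — is the prime $p = 3$. There $E_k$ always has bad reduction (Lemma~\ref{reduction}), $S_3$ is typically non-maximal, and the Selmer ratio $c_3(\phi_k)$ computed in Proposition~\ref{selmeratio} takes many values, so the soluble subgroup of $(K_3^*/K_3^{*3})_{N=1}$ is genuinely subtle. The main obstacle will be showing that \emph{every} locally soluble rational class at $3$ contains an integral orbit; I expect to handle this by a case analysis on $v_3(k)$ (which, under $3 \mid k$, runs over $1,2,3,4,5$ for sixth-power-free $k$), in each case either exhibiting the maximal order or an explicit non-maximal $S_3$-ideal $I_3$ with $I_3^3 \subset \delta S_3$ of the correct norm, using the explicit form $f = [ak,3bk,3a,b]$ from Remark~\ref{explicitcubic} and a direct $3$-adic solubility/integrality check. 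The condition $k \in 3\Z$ is exactly what makes this work uniformly — for $3 \nmid k$ (so $S_3$ maximal but $E_k$ still has bad reduction at $3$), the analogous statement can fail, which is why the theorem is stated only for $k \in 3\Z$ and the general case is presumably reduced to this one elsewhere by a change of variables or twisting argument. Once the local representatives are in hand at $3$ and everywhere else, the gluing step is formal, completing the proof.
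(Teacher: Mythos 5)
Your overall architecture --- reduce to finding, for each prime $p$, a valid triple over $\Z_p$ in the given rational class, then glue the local ideals into a global one --- is exactly the paper's first step, and your treatment of the primes of good reduction with $S_p$ maximal is fine. But the two hard local cases are only announced, not proved, and that is where the content of the theorem lies. At a prime $p \neq 3$ the paper does not enumerate soluble classes via Proposition~\ref{solsbad}; it uses solubility directly and uniformly: take a point $P = (x,y) \in E_k(\Q_p)$ mapping to $\delta$. If $x,y$ have negative valuation then $P$ lies in the formal group, which is pro-$p$ with $p \neq 3$, so $P \in \hat\phi(E_{-27k}(\Q_p))$ and $\delta = 1$, handled by the triple $(S,1,1)$. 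Otherwise $x,y \in \Z_p$, $\delta = y - \tau$, and one checks by hand that $I = \Z_p x + \Z_p \delta$ is an $S$-ideal with $N(I) = x\Z_p$, $N(\delta) = y^2 - k = x^3$, and $I^3 \subset \delta S$, so $(I,\delta,x)$ is valid. This covers all $p \neq 3$ at once, with no appeal to the good/bad reduction dichotomy (which, via Lemma~\ref{reduction}, implicitly requires $k$ sixth-power-free, whereas the theorem is stated for arbitrary $k \in 3\Z$). Your alternative of listing the at most three soluble classes in the bad-reduction case is plausible, but you have not carried out the verification, and the nontrivial classes $\partial((0,\pm\sqrt k)) = \pm 1/(2\tau)$ are precisely the ones for which the naive representative $[ak,3bk,3a,b]$ of Remark~\ref{explicitcubic} fails to be integral, so a genuine computation (a rescaling of $\delta$ modulo cubes and a choice of ideal) is required and is not supplied.

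The more serious gap is at $p=3$, which you correctly flag as the crux but defer entirely to ``a case analysis on $v_3(k)$ \dots\ exhibiting an explicit non-maximal $S_3$-ideal.'' The paper closes this case with two specific inputs that your plan does not identify. First, a descent lemma (Lemma~\ref{inductive}): a valid triple for the order $S_i$ of index $3^i$ in the maximal order yields one for $S_{i+1}$, because $\Disc(f) = -27\Delta(f)$ is always divisible by $3$, so the associated cubic form has a root mod $3$ and can be moved into the shape $3 \mid d$, after which $[9a,9b,3c,d/3]$ is an integer-matrix form valid for the smaller order. Second, the structural fact that every class of $(K_3^*/K_3^{*3})_{N=1}$ is represented by a unit of the maximal order unless $K_3 \cong \Q_3 \times \Q_3$, in which case a non-unit class is represented by $(3\pi,3u)$ and $(3S_0,\delta,s)$ is valid for $S_1$. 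The hypothesis $k \in 3\Z$ is used exactly here: it forces $S_3 = S_i$ with $i \geq 1$, so the descent starting from $S_0$ (unit case) or $S_1$ (non-unit case) reaches $S_3$. Note also that solubility at $3$ is never used --- every rational orbit at $3$ acquires an integral representative. Without these two ideas, your proposed case analysis over $v_3(k)$ and over all $\delta$ would have to rediscover them, and as written the proposal gives no evidence that it closes; this is the missing heart of the proof rather than a routine check.
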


\begin{proof}
By Theorem \ref{hcl1} applied to the cases $D = \Z$ and $D = \Z_p$, it suffices to find a
representative in $V(\Z_p)_{4k}$ for every soluble orbit of cubic forms $f  \in V(\Q_p)_{4k}$, for each prime $p$.  Indeed, such integral representatives would correspond to valid triples over $\Z_p$, which together determine a valid triple over $\Z$.  

Let $S = \Z_p[z]/(z^2 - k)$ and $K = \Q_p[z]/(z^2 - k)$ as before, and assume for now that $p \neq 3$.  The binary cubic form $f \in V(\Q_p)_{4k}^\sol$ corresponds to an element $\delta \in (K^*/K^{*3})_{N = 1}$ in the image of the Kummer map.
Let $P = (x,y) \in E_k(\Q_p)$ be a rational point mapping to $\delta$.  By Remark~\ref{kummer}, we have $\delta = y - \tau$, where $\tau$ is the image of $z$ in $K = \Q_p[z]/(z^2 - k)$.  If $x$ and $y$ have negative valuation, then $P$ lies in the subgroup $E_{k,1}(\Q_p) \subset E_k(\Q_p)$ isomorphic to the formal group of $E_k$.  Since the formal group is pro-$p$ and $p \neq 3$, it follows that there is a point $Q \in E_{k,1}(\Q_p)$ such that $3Q = P$; hence $P$ is in $\hat\phi(E_{-27k}(\Q_p))$ and $\delta = \partial(P) = 1$.  Thus, the valid triple $(S, 1,1)$ corresponds to an integral representative in the orbit of $f$ under the bijection of Theorem \ref{hcl1}.  

We may thus assume that $x$ and $y$ are in  $\Z_p$. Define $I = \Z_p x + \Z_p \delta \subset K$.  Then 
\begin{equation*}
\tau I \;=\; \Z_p x\tau + \Z_p\delta\tau 
\;=\; \Z_p(xy-x\delta)+\Z_p(x^3-y\delta)
\;\subset\; 
I,
\end{equation*}
and therefore $I$ is an $S$-ideal.  Furthermore, we have $N(I) = x\Z_p$, and $N(\delta) = N(y-\tau) = y^2-k = x^3$.  
Finally, we note that the elements 
\begin{equation*}
\begin{array}{lcl}
\delta^{-1}(x^3) &=& y + \tau \\[.03in]
\delta^{-1}(x^2\delta) &=& x^2\\[.03in]
\delta^{-1}(x\delta^2) &=&  xy - x\tau \\[.03in]
\delta^{-1}(\delta^3) &=& y^2 + k - 2y\tau, 
\end{array}
\end{equation*}
are each contained in $S = \Z_p + \Z_p \tau$, implying that $I^3\subset \delta S$.  Thus $(I,\delta,x)$ is a valid triple for $S$ in the sense of Theorem~\ref{hcl1} and Remark~\ref{valid}, and yields the desired integral representative in $V(\Z_p)_{4k}$.

\begin{remark}{\em 
One can even {\it characterize} the integral classes in terms of solubility conditions, at least if $p \neq 3$.  Specifically, if $k \in \Z_p$ is sixth-power-free, then an $\SL_2(\Q_p)$-orbit of forms $f \in V(\Q_p)_{4k}$ contains an integral form $f_0 \in V(\Z_p)_{4k}$ if and only if $f$ is soluble over $\Q_p^\ur$, the maximal unramified extension of $\Q_p$.  }
\end{remark}

Before considering the case $p = 3$, we state a general lemma.  For any prime $p$, and for any $i \geq 0$, let $S_i$ be the order of index $p^i$ in the maximal order $S_0$ of $K = \Q_p[z]/(z^2 - k)$.

\begin{lemma}\label{inductive}
If $(I, \delta, s)$ is a valid triple for the ring $S_i$, with $i \geq 1$, then there exists an $S_{i+1}$-ideal $J$ such that $(J, \delta, s)$ is a valid triple for the ring $S_{i+1}$.  If $p = 3$, then this is true even if $i = 0$.    
\end{lemma}

\begin{proof}
This follows from the explicit bijection given in the proof of Theorem \ref{hcl1}.  If $f = [a,3b,3c,d]$ is the binary cubic form corresponding to the valid triple $(I,\delta,s)$, then $f$ has a root over $\F_p$, since 
\[\Disc(f) = -27\Delta(f) = -27\cdot \Disc(S_i)\] is divisible by $p$.  So via a change of variables we may assume that $p \mid d$.  If $\alpha \Z_p + \beta \Z_p$ is the corresponding basis of $I$, then $[p^2 a, 3pb,3c,d/p]$ is an integer-matrix binary cubic form corresponding to the triple $(p\alpha \Z_p + \beta \Z_p, \delta, s)$ for the ring $S_{i+1}$.  If $p = 3$, then $\Disc(f) = -27\Delta(f)$ is divisible by $p$, even if $i = 0$.     
\end{proof}

%

Now let $p = 3$ and let $f \in V(\Q_3)_{4k}^\sol$ correspond to $\delta \in (K^*/K^{*3})_{N = 1}$, with $K$ and $S$ as before.  If the class of $\delta$ lies in the unit subgroup $(S_0^*/S_0^{*3})_{N = 1}$, then $(S_0, \delta, s)$ is a valid triple for the ring $S_0$, and so by Lemma~\ref{inductive}, there exists $J$ such that $(J, \delta, s)$ is valid for the ring $S$.  This triple corresponds to the desired $f \in V(\Z_3)_{4k}$ in the orbit corresponding to $\delta$.   

If $\delta$ is not represented by a unit, then we must be in the case $K = \Q_3 \times \Q_3$, as this is the only \'etale $\Q_3$-algebra where there exist $\delta$'s not represented by units. Since $k \in 3\Z$, we must have $S = S_i$ for some $i \geq 1$.  We may choose $\delta$ to be of the form $(3\pi, 3u)$, for some uniformizer $\pi \in 3\Z_3$ and unit $u \in \Z_3^*$.  Then the triple $(3S_0, \delta, s)$ is valid for the ring $S_1$, so by Lemma \ref{inductive}, there exist valid triples $ (J, \delta, s)$ for $S_i$, for all $i \geq 1$, and in particular for $S$.  This gives the desired $f \in V(\Z_3)_{4k}$ in the orbit corresponding to $\delta$, and completes the proof of Theorem~\ref{globalint}.             
\end{proof}

\section{The average size of $\Sel_\phi(E_k)$}\label{proofmain}

In this section, for acceptable subsets $S \subset \Z$, we asymptotically count the number of
$\SL_2(\Q)$-classes of locally soluble integer-matrix binary cubic forms $f \in V(\Z)$ of discriminant
$3^64k$ for $|k|<X$, $k \in S$,  as $X\to\infty$.
By the work of Section~\ref{loc}, this will then allow us to deduce Theorems~\ref{main}, \ref{mainS}, and~\ref{mavg}.

\subsection{The asymptotic number of binary cubic forms
of bounded discriminant with weighted congruence conditions}

Let $V(\R)$ denote the vector space of binary cubic forms over $\R$.
Let $V^{(0)}(\R)$ denote the subset of elements in $V(\R)$ having positive
discriminant, and $V^{(1)}(\R)$ the subset of elements having negative
discriminant. We use $V^{(i)}(\Z)$ to denote $V(\Z)\cap V^{(i)}(\R)$.

Now let $T$ be any set of integral binary cubic forms that is invariant
under the action of $\SL_2(\Z)$.  Let $N(T;X)$ denote the number of
{\it irreducible} binary cubic forms contained in $T$, up to
$\SL_2(\Z)$-equivalence, having absolute discriminant at most $X$.
Then we have the following theorem counting $\SL_2(\Z)$-classes of integer-matrix 
binary cubic forms of bounded reduced discriminant, which easily follows from the work of 
Davenport~\cite{Davenport2} and Davenport--Heilbronn~\cite{dh} (see~{\cite[Theorem~19]{bv}} for this deduction):

\begin{theorem}\label{countthm} Let $T$ be any
  set of integer-matrix binary cubic forms that is defined by finitely many
  congruence conditions modulo prime powers
  and that is invariant under the action of $\SL_2(\Z)$. 
  For each prime $p$,
  let $\mu_p(T)$ denote the $p$-adic density of the $p$-adic
  closure of $T$ in $V(\Z_p)$, where $V(\Z_p)$ is equipped with the
  usual additive $p$-adic measure normalized so that
  $\mu_p(V(\Z_p))=1$.  Then
  \begin{itemize}
  \item[{\rm (a)}]
   $\displaystyle{N(T\cap V^{(0)}(\Z);X)
      = \frac{\pi^2}{12} \cdot \prod_p \mu_p(T)\cdot X + o(X);}$
   \item[{\rm (b)}]
   $\displaystyle{N(T\cap V^{(1)}(\Z);X) =
     \frac{\pi^2}{4} \cdot \prod_p \mu_p(T)\cdot X + o(X).}$
  \end{itemize}
\end{theorem}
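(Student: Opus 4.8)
The goal is to count $\SL_2(\Z)$-classes of irreducible integer-matrix binary cubic forms of bounded reduced discriminant, and the key observation is that this is essentially a repackaging of the classical Davenport--Heilbronn count, just in a slightly different lattice and with the discriminant rescaled. So the plan is to reduce to the statements already available in the literature for the standard lattice $V^*(\Z)=\Sym^3\Z^2$ of integer-\emph{coefficient} binary cubic forms.

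\textbf{Step 1: Translate to the standard lattice.} The lattice $V(\Z)=\Sym_3\Z^2$ of integer-matrix forms $[a,3b,3c,d]$ sits inside $V^*(\Z)$, and the two are related by $\Disc$ versus $\Delta=-\tfrac1{27}\Disc$. I would first recall (this is exactly the content cited to \cite[Theorem~19]{bv}) that Davenport's asymptotic count of $\SL_2(\Z)$-classes of irreducible binary cubic forms of bounded discriminant in a congruence family, combined with the Davenport--Heilbronn tail estimate controlling reducible forms and forms with large stabilizer, holds not just for $V^*(\Z)$ but for any translate of a full-rank sublattice cut out by congruence conditions --- in particular for $V(\Z)$. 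The point is that $V(\Z)$ is a union of $27$ cosets of $27V^*(\Z)$, or more usefully that membership in $V(\Z)$ is itself a congruence condition mod $3$ on $V^*(\Z)$; so $T\subset V(\Z)$ defined by finitely many congruence conditions is also a subset of $V^*(\Z)$ defined by finitely many congruence conditions, and Davenport's method applies verbatim.

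\textbf{Step 2: Bookkeep the normalizations.} Here there are two normalization shifts to track carefully. First, passing from $\Disc$ to $\Delta$ multiplies discriminants by $-1/27$, so ``absolute reduced discriminant at most $X$'' on $V(\Z)$ corresponds to ``absolute discriminant at most $27X$'' on the same forms viewed in $V^*(\Z)$; the factor $27$ gets absorbed into the $p$-adic density $\mu_3(T)$ and into the leading constant in the expected way, and one checks the signs match (reduced discriminant positive iff $\Disc$ negative, etc., which is why the $V^{(0)}$ and $V^{(1)}$ roles come out as stated). Second, Davenport's fundamental-domain volume computation gives the constants $\pi^2/12$ (for the one cusp / positive-discriminant case, with its two-element generic stabilizer accounted for) and $\pi^2/4$ (for the negative-discriminant case). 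I would simply cite Davenport \cite{Davenport2} and the deduction in \cite{bv} for the precise values of these constants, noting that the product $\prod_p\mu_p(T)$ is the local density of the congruence family and converges because $T$ is defined by finitely many congruences. The separation into cases (a) and (b) by sign of the discriminant is inherited directly from Davenport, who treats the two real orbit-spaces separately.

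\textbf{Main obstacle.} The genuinely nontrivial input --- which I would \emph{invoke} rather than reprove --- is the uniformity/tail estimate: controlling the number of irreducible $\SL_2(\Z)$-classes in the region near the cusp of the fundamental domain, so that the count is $\prod_p\mu_p(T)\cdot(\text{const})\cdot X+o(X)$ with an honest error term, and simultaneously showing that reducible forms and forms fixed by a nontrivial element of $\SL_2(\Z)$ contribute only $o(X)$. This is precisely the technical heart of Davenport--Heilbronn, and the claim ``easily follows from the work of \cite{Davenport2, dh}'' is the assertion that their argument is insensitive to (i) replacing $V^*(\Z)$ by the sublattice $V(\Z)$ and (ii) imposing additional finitely many congruence conditions --- both of which only change the relevant lattice by a finite-index, finitely-describable modification and hence only rescale the volume computation and the main term by $\prod_p\mu_p(T)$, leaving the cusp analysis structurally untouched. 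So the proof is: cite \cite[Theorem~19]{bv} for the existence of this deduction, note that the passage from $\Disc$ to $\Delta$ is a harmless rescaling absorbed into $\mu_3$ and the sign convention, and read off (a) and (b).
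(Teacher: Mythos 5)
Your overall route is the same as the paper's: Theorem~\ref{countthm} is not proved from scratch but quoted, with the analytic content (Davenport's volume computation, the Davenport--Heilbronn cusp and reducibility estimates, and their adaptation to the lattice $\Sym_3\Z^2$ with finitely many further congruence conditions imposed) delegated to \cite{Davenport2}, \cite{dh}, and \cite[Theorem~19]{bv}, exactly as you propose.

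One claim in your Step 2, however, is wrong and would fail if carried out literally: namely, that the constants $\pi^2/12$ and $\pi^2/4$ --- and in particular which of $V^{(0)}$, $V^{(1)}$ receives the larger one --- follow from Davenport's classical constants by ``absorbing the factor $27$ into $\mu_3$'' and matching signs. Davenport's larger constant $\pi^2/24$ is attached to $\Disc<0$, which under $\Delta=-\tfrac1{27}\Disc$ corresponds to $\Delta>0$, i.e.\ to $V^{(0)}$; yet $V^{(0)}$ receives the \emph{smaller} constant $\pi^2/12$. The naive bookkeeping (index-$9$ sublattice, $|\Disc|\le 27X$) also produces neither $\pi^2/12$ nor $\pi^2/4$. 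The point is twofold: $\Disc$ is far from equidistributed modulo powers of $3$ on the sublattice $\Sym_3\Z^2\subset\Sym^3\Z^2$, so the factor $27$ is not simply absorbed into $\mu_3(T)$; and the factor-of-$3$ asymmetry between the two signs here is governed by the archimedean orbit and stabilizer structure --- forms with $\Delta<0$ have an order-$3$ stabilizer in $\SL_2(\R)$ by Corollary~\ref{bij}, which enters the real volume computation as in (\ref{vol}) with $n_0=3$, $n_1=1$. So the Davenport-style fundamental-domain computation genuinely has to be redone for the representation $\Sym_3$, which is precisely what \cite[Theorem~19]{bv} does. Since you ultimately defer to that reference for the precise constants, your proof stands, but the heuristic ``harmless rescaling'' derivation should be deleted or replaced by the real orbit analysis.
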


For our particular application, we require a more general
congruence version of our counting theorem for binary cubic forms, namely,
one which not only allows appropriate infinite sets of congruence conditions
to be imposed but which also permits weighted counts of lattice points
(where weights are also assigned by congruence conditions).
More precisely, we say that a function $\phi\colon V(\Z)\to[0,1]\subset\R$ is
{\it defined by congruence conditions} if, for all primes~$p$,
there exist functions $\phi_p\colon  V(\Z_p)\to[0,1]$ such that:
\begin{itemize}
\item[(1)] For all $f\in V(\Z)$, we have $\phi(f)=\prod_p\phi_p(f)$.
\item[(2)] For each prime $p$, the function $\phi_p$ is
locally constant outside some set $Z_p \subset V(\Z_p)$ of measure zero.
\end{itemize}
Such a function $\phi$ is called {\it acceptable} if it is
$\SL_2(\Z)$-invariant and, for sufficiently large primes~$p$, we have
$\phi_{p}(f)=1$ whenever $p^2\nmid \Delta(f)$.
For such an acceptable function $\phi$, we let $N_\phi(V^{(i)}(\Z); X)$ denote
the number of $\SL_2(\Z)$-equivalence classes of elements in $V^{(i)}(\Z)$
having absolute discriminant at most~$X$, where the equivalence class of
$f\in V^{(i)}(\Z)$ is weighted by $\phi(f)$.

We then have the following generalization of Theorem~\ref{countthm}:
\begin{theorem}\label{thsquarefreebc}
  Let $\phi \colon V(\Z)\to[0,1]$ be an acceptable function that is defined by
  congruence conditions via the local functions $\phi_{p}\colon V(\Z_p)\to[0,1]$.
  Then
\begin{equation}
N_\phi(V^{(i)}(\Z);X)
  = N(V^{(i)}(\Z);X)
  \prod_{p} \int_{f\in V_{\Z_{p}}}\phi_{p}(f)\,df+o(X).
\end{equation}
\end{theorem}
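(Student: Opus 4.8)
The plan is to deduce Theorem~\ref{thsquarefreebc} from the uniform counting estimate in Theorem~\ref{countthm} by a standard inclusion--exclusion/truncation argument over the sieving primes, of the type carried out in \cite{dh} and \cite{bs2}. First I would reduce to the case where each local weight $\phi_p$ is the indicator function of an $\SL_2(\Z_p)$-invariant open set, since a general $\phi_p$ with values in $[0,1]$, locally constant away from a measure-zero set, is a monotone limit of finite nonnegative combinations of such indicators; linearity of $N_\phi$ in $\phi$ and a squeezing argument then let us pass to the limit, provided we have uniform error control. So the core statement to prove is: for an $\SL_2(\Z)$-invariant set $T \subseteq V(\Z)$ cut out by congruence conditions modulo prime powers (now allowing infinitely many primes), subject to the acceptability condition that $\phi_p(f)=1$ whenever $p^2 \nmid \Delta(f)$ for all large $p$, one has $N(T \cap V^{(i)}(\Z);X) = N(V^{(i)}(\Z);X)\prod_p \mu_p(T) + o(X)$.

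The key steps, in order, are as follows. \textbf{(1) Truncation.} For a parameter $Y$, let $T^{(Y)}$ be the set defined by imposing only the congruence conditions at primes $p < Y$; this is a finite union of congruence classes, so Theorem~\ref{countthm} applies and gives $N(T^{(Y)} \cap V^{(i)}(\Z);X) = N(V^{(i)}(\Z);X)\prod_{p<Y}\mu_p(T) + o_Y(X)$. \textbf{(2) Tail estimate.} The difference $N(T^{(Y)};X) - N(T;X)$ is bounded by the number of $\SL_2(\Z)$-classes of $f \in V^{(i)}(\Z)$ with $|\Delta(f)| < X$ that violate a congruence condition at some prime $p \geq Y$; by acceptability this forces $p^2 \mid \Delta(f)$ for some $p \geq Y$. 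The number of such classes is $o(X)$ as $X\to\infty$, uniformly, provided $Y$ is large, by the estimate on binary cubic forms with large square divisors of the discriminant --- this is precisely the uniformity input supplied by the Davenport--Heilbronn square-free sieve for cubic forms (and its integer-matrix version in \cite{bv}). Concretely one splits primes $Y \leq p \leq X^{1/4}$ (handled by a direct lattice-point count: forms with $p^2 \mid \Delta$ lie in $O(X/p^2)$ congruence classes, each contributing $O(X/p^2) + O(\sqrt{X}/p)$ up to units, so summing over $p\geq Y$ gives $O(X/Y)$) from primes $p > X^{1/4}$ (the genuinely hard range, handled by the Davenport--Heilbronn tail argument via the Hessian/resolvent, giving $o(X)$). \textbf{(3) Combine.} Letting $X \to \infty$ then $Y \to \infty$, and noting $\prod_{p<Y}\mu_p(T) \to \prod_p \mu_p(T)$ (convergence of the product follows from acceptability, since $\mu_p(T) = 1 + O(1/p^2)$ for large $p$), yields the claimed asymptotic. \textbf{(4) Return to weights.} Finally, approximate a general acceptable $\phi$ from above and below by indicators of congruence sets as in step~0, using that $\mu_p(T)=\int_{V(\Z_p)}\phi_p\,df$ and the $o(X)$ error is controlled uniformly in the approximant by the same tail estimate.

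The main obstacle is the tail estimate in the range $p > X^{1/4}$ in step~(2): controlling the count of integer-matrix binary cubic forms of bounded discriminant whose discriminant is divisible by $p^2$ for a large prime $p$ is exactly the delicate square-free sieve of Davenport--Heilbronn, and one must check that their argument (or the version in \cite[Theorem~19]{bv}) applies verbatim to the integer-matrix lattice $V(\Z) = \Sym_3\Z^2$ with its reduced discriminant $\Delta = -\tfrac{1}{27}\Disc$, and that it is genuinely uniform in the congruence conditions being imposed at the small primes. Everything else --- the inclusion--exclusion, the passage from indicators to $[0,1]$-valued weights, and the convergence of the local density product --- is routine bookkeeping once this uniform tail bound is in hand.
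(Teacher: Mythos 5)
Your proposal is correct and follows essentially the same route as the paper, which simply invokes the argument of \cite[Theorem~2.21]{bs2} (the same truncation/inclusion--exclusion over sieving primes applied to Theorem~\ref{countthm}, plus the passage from indicator functions to $[0,1]$-valued weights) with the uniformity estimate replaced by \cite[Proposition~29]{bst}. The ``main obstacle'' you identify --- the uniform tail bound on forms with $\Delta$ divisible by $p^2$ for large $p$ --- is exactly the input the paper supplies via that citation.
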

\begin{proof}
The proof is exactly as in \cite[Theorem~2.21]{bs2}, though we use 
\cite[Proposition~29]{bst} in place of the uniformity estimate \cite[Theorem~2.13]{bs2}.
\end{proof}

\subsection{Weighted counts of binary cubic forms corresponding to Selmer elements}\label{wtdcount}

We wish to apply Theorem~\ref{thsquarefreebc} to the set $T$ of all integral
binary cubic forms that lie in the correspondence of Theorems~\ref{Qcor} and~\ref{globalint},
with appropriately assigned weights.
Namely, we need to count each $\SL_2(\Z)$ orbit, $\SL_2(\Z)\cdot f$,
weighted by $1/n(f)$, where $n(f)$ is equal to the number of
$\SL_2(\Z)$-orbits inside the $\SL_2(\Q)$-equivalence class of $f$
in $V(\Z)$. For this purpose, it suffices to count the number of
$\SL_2(\Z)$-orbits of locally soluble integral binary cubic forms
having bounded discriminant and no rational linear factor with each orbit
$\SL_2(\Z)\cdot f$ weighted by $1/m(f)$, where
$$m(f):=\displaystyle\sum_{f'\in O(f)}\frac{|\Aut_\Q(f')|}{|\Aut_\Z(f')|}=\displaystyle\sum_{f'\in O(f)}\frac{|\Aut_\Q(f)|}{|\Aut_\Z(f')|};$$ 
here $O(f)$ is a set of representatives for the action of $\SL_2(\Z)$
on the $\SL_2(\Q)$-equivalence class of $f$ in $V(\Z)$ and $\Aut_\Q(f)$ (resp.\
$\Aut_\Z(f)$) denotes the stabilizer of $f$ in $\SL_2(\Q)$ (resp.\
$\SL_2(\Z)$). The reason it suffices to weight by $1/m(f)$ instead of
$1/n(f)$ is that, by the proof of \cite[Lemma~22]{bst},
all but a negligible number of $\SL_2(\Z)$-orbits of
integral irreducible binary cubic forms with bounded discriminant
have trivial stabilizer in $\SL_2(\Q)$; thus all but a negligible number of 
$\SL_2(\Z)$-equivalence classes of integral binary cubic forms of bounded discriminant satisfy $m(f)=n(f)$. 
In the remainder of this subsection, we compute this weighted $p$-adic density of locally soluble
integral binary cubic forms.


For a prime $p$ and a binary cubic form $f\in V(\Z_p)$, define $m_p(f)$ by
$$m_p(f):=\displaystyle\sum_{f'\in O_p(f)}\frac{|\Aut_{\Q_p}(f')|}{|\Aut_{\Z_p}(f')|}=\displaystyle\sum_{f'\in O_p(f)}\frac{|\Aut_{\Q_p}(f)|}{|\Aut_{\Z_p}(f')|},$$
where $O_p(f)$ is a set of representatives for the action of
$\SL_2(\Z_p)$ on the $\SL_2(\Q_p)$-equivalence class of $f$ in
$V(\Z_p)$ and $\Aut_{\Q_p}(f)$ (resp.\ $\Aut_{\Z_p}(f)$) denotes the
stabilizer of $f$ in $\SL_2(\Q_p)$ (resp.\ $\SL_2(\Z_p)$). By Proposition~\ref{stabs}, we have the factorization 
\[m(f)=\prod_pm_p(f),\]
and so we have put ourselves in position to use Theorem \ref{thsquarefreebc}.

Let $S$ be an acceptable subset of $\Z$, and for each prime $p$, let $S_p$ denote the closure of
$S$ in $\Z_p$. 
Let ${B(S)}$ denote the set of all locally soluble integral binary
cubic forms having discriminant $3^6 4 k$ for $k\in S$,
and let ${B_p(S)}$ denote the
$p$-adic closure of ${B(S)}$ in $V(\Z_p)$. 

We now determine the
$p$-adic density of ${B_p(S)}$, where each element $f\in
{B_p(S)}$ is weighted by $1/m_p(f)$,
in terms of the $p$-adic integral over $S_p$ of the local Selmer ratio and 
the volume $\Vol(\SL_2(\Z_p))$ of the group $\SL_2(\Z_p)$, which is computed with respect to a fixed generator $\omega$ of the  rank 1 module of top-degree differentials of $\SL_2$ over $\Z$, so that $\omega$ is well-determined up to sign. 



\begin{proposition}\label{denel}
There is a rational number $\mathcal{J} \in \Q^\times$, independent of $S$ and $p$, such that
\[\int_{{B_p(S)}}\frac{1}{m_p(f)}df=|\J|_p\cdot \displaystyle{\Vol(\SL_2(\Z_p))\cdot \int_{k\in S}\frac{|E_{k}(\Q_p)/\hat\phi(E_{-27k}(\Q_p))|}{|E_{-27k}(\Q_p)[\hat\phi]|}dk}.\]
\end{proposition}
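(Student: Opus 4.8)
The plan is to carry out the standard Jacobian change of variables that converts the $p$-adic integral over a space of forms into an integral over $\SL_2(\Z_p)$ against the discriminant parameter $k$, and then to identify the fibre count with the local Selmer ratio via the orbit parametrizations of Section~\ref{construction}. Throughout I use that the $p$-adic closure $B_p(S)$ consists exactly of the forms $f\in V(\Z_p)$ with $\Delta(f)\in\{3^6\cdot 4k:k\in S_p\}$ that are soluble over $\Q_p$ (i.e.\ $z^3=f(x,y)$ has a $\Q_p$-point): a $\Q_p$-soluble form of the correct discriminant can be adjusted away from~$p$ to become everywhere locally soluble, which is the local content of Theorems~\ref{Qcor} and~\ref{globalint}. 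The first task is to produce the rational constant $\J\in\Q^\times$. On the locus $\Delta(f)\neq 0$ in $V$, the orbit map $(g,k)\mapsto g\cdot f_k$, where $f_k$ is a local analytic section of $f\mapsto\Delta(f)/(3^6\cdot 4)$, is a local isomorphism, and one checks, exactly as in the lattice-point count of Davenport~\cite{Davenport2} underlying Theorem~\ref{countthm}, that its Jacobian --- measured against the fixed $\Z$-form $\omega$ of the top differentials of $\SL_2$ and the standard measures on $V$ and $\mathbb{A}^1$ --- is a nonzero constant, independent of the orbit, the base point, $k$, $p$, and $S$.

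With this change of variables in hand, fix $k\in S_p$ and sum over the soluble $\SL_2(\Q_p)$-orbits $\mathcal{O}=\SL_2(\Q_p)\cdot f_0$ of discriminant $3^6\cdot 4k$. For each such $\mathcal{O}$, the weight $1/m_p(f)$ is constant, equal to $1/m_p(f_0)$, on $\mathcal{O}\cap V(\Z_p)$, and this set decomposes into the finitely many $\SL_2(\Z_p)$-orbits indexed by $O_p(f_0)$; by the change of variables the $df$-measure of $\SL_2(\Z_p)\cdot f'$ inside the fibre over $k$ is $|\J|_p\,\Vol(\SL_2(\Z_p))/|\Aut_{\Z_p}(f')|$. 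Summing over $f'\in O_p(f_0)$ and using the definition $m_p(f_0)=|\Aut_{\Q_p}(f_0)|\sum_{f'\in O_p(f_0)}|\Aut_{\Z_p}(f')|^{-1}$, the total contribution of $\mathcal{O}$ to $\int_{B_p(S)}\frac{1}{m_p(f)}\,df$ telescopes to $|\J|_p\,\Vol(\SL_2(\Z_p))/|\Aut_{\Q_p}(f_0)|$. This is precisely the local version of the bookkeeping in the proof of Proposition~\ref{stabs}; the set of $f$ on which the stabilizer data behaves non-uniformly has measure zero and may be discarded.

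It remains to evaluate $\sum_{\mathcal{O}}|\Aut_{\Q_p}(f_0)|^{-1}$, where $\mathcal{O}$ runs over the soluble $\SL_2(\Q_p)$-orbits of discriminant $3^6\cdot 4k=4\cdot(3^6 k)$. By Corollary~\ref{soluble} applied over $F=\Q_p$ with parameter $3^6 k$, the number of such orbits equals $|E_{3^6 k}(\Q_p)/\hat\phi(E_{-27\cdot 3^6 k}(\Q_p))|$, and by Theorem~\ref{H1bij} each orbit has stabilizer isomorphic to $E_{-27\cdot 3^6 k}[\hat\phi](\Q_p)$; since the isomorphisms $E_{3^6 k}\cong E_k$ and $E_{-27\cdot 3^6 k}\cong E_{-27k}$ are compatible with the $3$-isogenies, the sum equals $|E_k(\Q_p)/\hat\phi(E_{-27k}(\Q_p))|/|E_{-27k}(\Q_p)[\hat\phi]|$. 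Assembling the three steps and integrating over $k\in S_p$ yields the asserted identity.

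I expect the main obstacle to be the rigorous confirmation that $\J$ is a genuine constant, uniform in $p$, together with the behavior at $p=3$: there $E_k$ has bad reduction for every $k$ (Lemma~\ref{reduction}), and the existence of integral representatives in discriminant $3^6\cdot 4k$ for \emph{all} $k$ is exactly the content of Section~\ref{loc}, so the analytic section used in the change of variables and the enumeration of integral orbits must be arranged compatibly with Theorem~\ref{globalint} and Lemma~\ref{inductive}. Once the parametrization is set up in this way, the density computation goes through uniformly over all primes.
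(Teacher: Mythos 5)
Your proposal is correct and is essentially the paper's argument: the paper proves this proposition by citing the change-of-variables computation of \cite[Proposition~3.9]{bs2} verbatim, adding only the two identifications you make in your third paragraph (soluble orbit count $=|E_k(\Q_p)/\hat\phi(E_{-27k}(\Q_p))|$ via Corollary~\ref{soluble}/Theorem~\ref{Qcor}, and $|\Aut_{\Q_p}(f)|=|E_{-27k}[\hat\phi](\Q_p)|$ via Theorem~\ref{H1bij}). You have simply unwound that citation — the Jacobian constant $\J$, the telescoping of $1/m_p$ against the $\SL_2(\Z_p)$-orbit volumes, and the sum over rational orbits weighted by inverse stabilizers — which matches the intended proof exactly.
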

\begin{proof}
We make use of the facts that 
the number of $\SL_2(\Q_p)$-orbits of soluble binary cubic forms of discriminant $4k$ is equal to the cardinality of
$E_{-27k}(\Q_p)/\hat\phi(E_k(\Q_p))$ (by Theorem~\ref{Qcor}), and the cardinality of
$\Aut_{\Q_p}(f)$ is equal to the cardinality of
$E_k(\Q_p)[\hat\phi]$ (by Corollary~\ref{H1bij}); otherwise, 
the proof is identical to \cite[Proposition~3.9]{bs2}. 
\end{proof}


 
We note that the rational number $\J$ also shows up in the archimedean factor of Theorem \ref{thsquarefreebc}, since 
\begin{equation}\label{vol}
N(V^{(i)}_\Z; X) = \frac{\Vol(\SL_2(\Z) \backslash \SL_2(\R))|\J|_\infty}{n_i}X + o(X),
\end{equation}    
where $n_0 = 3$ and $n_1 = 1$; see \cite[Proposition~3.11, Remark~3.14]{bs2}.  (In fact, it turns out that $\J=3/2$, although we shall not need this fact.)
\subsection{Proof of Theorems \ref{main} and \ref{mainS}}\label{final}

\begin{theorem}\label{cong}
Let $S \subset \Z$ be any acceptable subset of integers. 
Then when all elliptic curves
 $E_k\colon y^2=x^3+k$, $k\in S$, are ordered by the absolute value of $k$,
the average size of the $\hat\phi$-Selmer group associated to
 the $3$-isogeny $\hat\phi \colon E_{-27k}\to E_{k}$ is 
\begin{eqnarray}\label{fineq}
1+ \prod_{p \leq \infty}
 \displaystyle\frac{\displaystyle\int_{k\in S_p} c_p(\hat\phi_k)dk}
  {\displaystyle\int_{k\in S_p}dk},
\end{eqnarray} 
where $S_p$ denotes the $p$-adic closure of $S$ in $\Z_p$ for $p < \infty$, and $S_\infty$ is the smallest interval in $\R$ containing $S$. 
\end{theorem}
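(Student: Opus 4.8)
The plan is to assemble the proof from the three ingredients already developed in this section. First, by Theorem~\ref{Qcor}, for each nonzero $k \in F = \Q$ the $\hat\phi$-Selmer group $\Sel_{\hat\phi}(E_{-27k})$ is in bijection with the set of $\SL_2(\Q)$-orbits of locally soluble binary cubic forms in $V(\Q)_{4k}$, and by Theorem~\ref{globalint} (applicable since we may freely rescale $k$ by sixth powers, in particular by $3^6$, so that $k \in 3\Z$) every such orbit has an integral representative in $V(\Z)_{3^6 4k}$. Summing the identity $|\Sel_{\hat\phi}(E_{-27k})| = \sum_{\text{orbits}} 1$ over $|k| < X$, $k \in S$, the total count of locally soluble $\SL_2(\Q)$-classes, each weighted so as not to overcount the $\SL_2(\Z)$-orbits inside it, is governed by $N_\phi(V^{(i)}(\Z); X)$ for the acceptable weight $\phi = 1/m_p$ cut out by $B(S)$; this is precisely the setup of \S\ref{wtdcount}. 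The subtlety that only all-but-negligibly-many orbits satisfy $m(f) = n(f)$, and that reducible forms contribute exactly the identity class (one orbit per $k$, giving the $+1$), has already been addressed there.

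Next I would apply Theorem~\ref{thsquarefreebc} to $T = B(S)$ with weight function $f \mapsto 1/m(f) = \prod_p 1/m_p(f)$, obtaining
\[
N_\phi(V^{(i)}(\Z); X) = N(V^{(i)}(\Z); X) \prod_p \int_{B_p(S)} \frac{1}{m_p(f)}\, df + o(X).
\]
Now substitute the local density formula of Proposition~\ref{denel} for each finite $p$, and the volume formula~(\ref{vol}) for the archimedean normalization $N(V^{(i)}(\Z); X)$. The crucial arithmetic cancellation is that the rational number $\J$ appears both in~(\ref{vol}) and (raised to a $p$-adic absolute value) in Proposition~\ref{denel}, so by the product formula $|\J|_\infty \prod_p |\J|_p = 1$ the factor $\J$ disappears entirely; likewise $\Vol(\SL_2(\Z)\backslash\SL_2(\R)) \prod_p \Vol(\SL_2(\Z_p))$ collapses to a global Tamagawa-type constant that matches the archimedean factor $1/n_i$ coming from $\R$. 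What survives is
\[
\sum_{\substack{|k| < X,\ k \in S \\ \operatorname{sgn} k = (-1)^i}} |\Sel_{\hat\phi}(E_{-27k})| = \#\{k\} + \Big(\tfrac{1}{n_i}\text{-type factor}\Big)\cdot \prod_{p < \infty} \int_{k \in S_p} c_p(\hat\phi_k)\, dk \cdot X + o(X),
\]
where the local Selmer ratio enters because $|E_k(\Q_p)/\hat\phi(E_{-27k}(\Q_p))| / |E_{-27k}[\hat\phi](\Q_p)| = c_p(\hat\phi_k)$ by definition~(\ref{deflsr}); the archimedean factor $1/n_i$ is exactly $c_\infty(\hat\phi_k)$ as computed in \S\ref{R}. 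Dividing by the number of admissible $k$ with $|k| < X$ (which is $(\int_{S_\infty \cap \{\pm\}} dk / (2X)) \cdot 2X + o(X)$ times the finite-place densities) and summing the contributions from positive and negative $k$, every normalization constant cancels against the denominator, leaving the clean product~(\ref{fineq}).

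The main obstacle is bookkeeping rather than conceptual: one must carefully track all the normalization constants ($\J$, $\Vol(\SL_2(\Z_p))$, $n_0 = 3$ versus $n_1 = 1$, the $\pi^2/12$ versus $\pi^2/4$ in Theorem~\ref{countthm}) and verify that after the product-formula cancellations the archimedean local Selmer ratio $c_\infty(\hat\phi_k)$ ($= 1/3$ for $k > 0$, $1$ for $k < 0$) emerges with exactly the right weight so that the positive-$k$ and negative-$k$ sums combine into a single product over all places $p \leq \infty$. A secondary point requiring care is the passage from $k \in S$ to $k \in 3\Z$: since $S$ is acceptable one restricts to sixth-power-free $k$ without loss (each isomorphism class counted once), and the rescaling $k \mapsto 3^6 k$ needed for Theorem~\ref{globalint} changes neither $\Sel_{\hat\phi}$ nor the local integrals, since $c_p$ is defined on the isomorphism class of the isogeny. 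Finally, dividing by $\int_{S_p} dk$ at each finite place and by $\int_{S_\infty} dk$ at infinity rather than by a single count of $k \in S$ with $|k| < X$ is legitimate because, for an acceptable $S$, the $p$-adic densities $\int_{S_p} dk$ are all $1$ for large $p$ and the product converges; this is where acceptability of $S$ is essential. Setting $S = \Z$ and invoking Proposition~\ref{selmeratio} to evaluate the local integrals then yields Theorem~\ref{main}, completing the chain.
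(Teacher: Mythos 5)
Your proposal is correct and follows essentially the same route as the paper's own proof: it combines Theorems~\ref{Qcor} and~\ref{globalint} with the weighted count of \S\ref{wtdcount}, applies Theorem~\ref{thsquarefreebc} and Proposition~\ref{denel}, and cancels the normalizations via the product formula for $\J$ together with the fact that $\SL_2$ has Tamagawa number $1$, exactly as in \S\ref{final}. The bookkeeping points you flag (the identification of the archimedean weight with $c_\infty(\hat\phi_k)$ and the harmless rescaling $k\mapsto 3^6k$) are handled in the same implicit way in the paper.
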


\begin{proof}
Let $X = 3^64Y$, and $G = \SL_2$ and define $E'_k = E_{-27k}$.
By the discussion in \S\ref{wtdcount}, to count the total number of non-identity elements in the groups $\Sel_{\hat\phi_k}(E_{-27k})$ with $k < Y$, it suffices to count the number of locally soluble irreducible $\SL_2(\Z)$-orbits of binary cubic forms $f \in V(\Z)$ having discriminant in the set $3^6 4S$ and also bounded by $X$, and where each orbit is weighed by $1/m(f)$.  This weighting function is acceptable by Proposition \ref{intorb}.  Thus, by Theorem~\ref{thsquarefreebc}, Proposition~\ref{denel},  \S \ref{R}, and (\ref{vol}), the number of such weighted orbits divided by the total number of $k \in S$ with $k < Y$ approaches
\begin{equation*}\label{fineq3}
  \Vol(G(\Z)\backslash G(\R)) 
  |\J|_\infty\frac{\displaystyle{\sum_{{k\in S}\atop{|k|<Y}}}
   \frac{1}{|E'_k[\hat\phi](\R)|}}
   { \displaystyle{\sum_{{k\in S}\atop{|k|<Y}}} 1}
   \prod_p|\J|_p\Vol(G(\Z_p)) \displaystyle
   \frac{\displaystyle\prod_p\displaystyle\int_{k\in S_p}
   \frac{|E_{k}(\Q_p)/\hat\phi(E'_k(\Q_p))|}{|E'_k[\hat\phi](\Q_p)|}dk}
   {\displaystyle\prod_p\displaystyle\int_{k\in S_p}dk}
\end{equation*}
\begin{align*}
&=\frac{\displaystyle{\sum_{{k\in S}\atop{|k|<Y}}}
   \frac{1}{|E'_k[\hat\phi](\R)|}}
   { \displaystyle{\sum_{{k\in S}\atop{|k|<Y}}} 1}\prod_p \displaystyle
   \frac{\displaystyle\int_{k\in S_p}
   \frac{|E_{k}(\Q_p)/\hat\phi(E'_k(\Q_p))|}{|E'_k[\hat\phi](\Q_p)|}dk}
   {\displaystyle\int_{k\in S_p}dk}\\
   &= \prod_{p \leq \infty}
 \displaystyle\frac{\displaystyle\int_{k\in S_p} c_p(\hat\phi_k)dk}
  {\displaystyle\int_{k\in S_p}dk}
  \end{align*}
as $Y\to\infty$.  The first equality is due to the product formula 
$\prod_{ p\leq \infty} |\J|_p = 1$, and the fact that the Tamagawa number $\Vol(G(\Z)\backslash G(\R))\prod_p\Vol(G({\Z_p}))$ of $G$ is equal to $1$.  This proves   
 formula (\ref{fineq}) of Theorem~\ref{cong}, after taking into account the identity element of each $\hat\phi$-Selmer group.
\end{proof}

\begin{proof}[\bf Proof of Theorem \ref{mainS}]
The formula comes from the change of variables $k \mapsto -27k$ in the result of Theorem \ref{cong}, and the fact that $\hat\phi_k = \phi_{-27k}$.  
\end{proof}

\begin{proof}[\bf Proof of Theorem \ref{main}]  We use Proposition \ref{selmeratio} to compute the $p$-adic integrals in Theorem \ref{mainS} for the set $S = \Z$.  The result is that the product of the local densities at finite primes is equal to 
\begin{eqnarray*}
r&=&\;
\left[
 \frac{
   (1-2^{-1})
   (\frac43 + 2^{-1} + \frac43 2^{-2} + 2^{-3} +  2^{-4}+ 2^{-5})
   }{1-2^{-6}}
\right]
\label{r2} \\
 {} & &\cdot
\left[
 \frac{
  (1-3^{-1})
  (\frac53 + 2 \cdot3^{-1} + \frac23 \cdot3^{-2} + \frac73 \cdot 3^{-3}
    + 2\cdot3^{-4} + 6\cdot  3^{-5})
    }{1-3^{-6}}
\right]
\label{r3}\\
 {} & & \cdot
\prod_{p\,\equiv\, 5 \rm{\:(mod\: 6)}}
\left[\frac{
   (1-p^{-1})
   (1 + p^{-1} + \frac53 p^{-2} + p^{-3} + \frac53 p^{-4} + p^{-5})
   }{1-p^{-6}} 
\right].
\label{rp}
\end{eqnarray*}
In each Euler factor, the six-term sum records the weighted average value of $c_p(\phi)$ on $p^i \Z_p - p^{i+1}\Z_p$ for $i = 0, \ldots, 5$.  Since $E_{km^6} \cong E_k$, these averages cyclically repeat themselves for $i \geq 6$, which explains the factor $1 - p^{-6}$ in the denominator.     
\end{proof}

\subsection{Proof of Theorem \ref{mavg}}\label{Tmproof} 
Claim $(i)$ is clear, and $(ii)$ follows immediately from Proposition \ref{selmeratio}.  Claim $(iii)$ follows from Cassels' formula \cite{casVIII},
\begin{equation}\label{casformula}
c(\phi) = \dfrac{|E_{-27k}(\Q)[\hat\phi]|\cdot |\Sel_\phi(E_k)|}{|E_k(\Q)[\phi]|\cdot|\Sel_{\hat\phi}(E_{-27k})|}.
\end{equation}

To prove $(iv)$, we use Proposition \ref{selmeratio} to express $T_m$ as a disjoint union 
\[T_m = \bigcup_{n \in \Z} T_{m,n},\]
where $T_{m,n}$ is the set of all $k \in T_m$ for which there are exactly $m + n$ primes $p$ (possibly including $p = \infty$) satisfying $c_p(\phi_k) \geq 3$, with the caveat that if $c_3(\phi_k) = 9$ then $p = 3$ is counted twice.  Each $T_{m,n}$ can itself be expressed as a disjoint union
\[T_{m,n}  = \bigcup_{(\Pi_1, \Pi_2)} T_{\Pi_1,\Pi_2},\]
where the union is over all pairs $(\Pi_1, \Pi_2)$, where $\Pi_1$ is a multiset of $m+n$ primes, with each prime having multiplicity at most 1, except for $p = 3$ which may have multiplicity at most 2, and $\Pi_2$ is a set of $n$ primes, disjoint from $\Pi_1$.  The set $T_{\Pi_1, \Pi_2}$ consists of those integers $k \in T_m$ such that $c_p(\phi_k) \geq 3$ if and only if $p \in \Pi_1$ (and having multiplicity 2 if and only if $p = 3$ and $c_3(\phi_k) = 9)$, and such that $c_p(\phi_k) = 1/3$ if and only if $p \in \Pi_2$.  

Note that each set $T_{\Pi_1,\Pi_2} \subset \Z$ is acceptable, by the explicit congruence conditions of Proposition~\ref{selmeratio}.  Moreover, for each prime $p$, the function $c_p(\phi_k)$ is constant on $T_{\Pi_1,\Pi_2}$.  Thus by Theorem~\ref{mainS}, for any acceptable set $S \subset T_{\Pi_1,\Pi_2}$, the average size of $\Sel_\phi(E_k)$ for $k \in S$ is 
\[1 + \prod_p c_p(\phi_k) = 1 + c(\phi_k) = 1 + 3^m.\]
Since $c(\hat\phi) = c(\phi)^{-1}$, a similar argument shows that the average size of $\Sel_{\hat\phi}(E_{-27k})$ for $k \in S$ is $1 + 3^{-m}$. Note that any acceptable set $S$ contained in $T_m$ is necessarily contained in some $T_{\Pi_1,\Pi_2}$, so this proves Theorem \ref{mavg}(iv) in the case that $S\subset T_m$ is an acceptable set.  

If $S = T_m$, then since 
\[T_m = \bigcup_n \bigcup_{(\Pi_1,\Pi_2)} T_{\Pi_1,\Pi_2},\]
we may write $T_m$ as an ascending union $T_m = \bigcup_{i \geq 1} S_i$, where each $S_i$ is the union of all $T_{\Pi_1,\Pi_2}$ such that each prime $p \in \Pi_1 \cup \Pi_2$ is less than the $i$-th prime number $p_i$.  Thus, each $S_i$ is a finite union of acceptable sets,  and so the average size of $\Sel_\phi(E_k)$ for $k \in S_i$ is $1 + 3^m$.  Moreover, by Proposition~\ref{selmeratio}, the binary cubic forms $f \in V(\Z)$ having discriminant in the complement $T_m - S_i$ have discriminant divisible by $p_j^2$ for some $j\geq i$.  By the uniformity estimate in~\cite[Proposition~29]{bst}, we have $\sum_{k\in T_m\setminus S; |k|<X}\Sel_{\phi}(E_k)$ is $\sum_{j\geq i}O(X/p_j^2)=O(X/p_i)$, where the implied constant is independent of~$i$.  That is, the total number of Selmer elements of $E_k$ over all $k \in T_m \setminus S_i$, $|k|<X$, is $O(X/p_i)$, while the total number of Selmer elements of $E_k$ over all $|k|<X$ is of course $\gg X$.  Letting $i$ tend to infinity, we conclude that the average size of $\Sel_\phi(E_k)$ for $k \in T_m$ is also equal to $ 1+ 3^m$.  The same argument shows that the average size of $\Sel_{\hat\phi}(E_k)$ for $k \in T_m$ is  again equal to $ 1+ 3^{-m}$.   This completes the proof of Theorem~\ref{mavg}. 

\begin{remark}{\em 
If $S \subset \Z$ is any acceptable set, then the average size of $\Sel_\phi(E_k)$ for $k \in S$ is equal to $\sum_{m \in \Z} \mu(S \cap T_m)(1 + 3^m)$ and the average size of $\Sel_{\hat\phi}(E_{-27k})$ is equal to $\sum_{m \in \Z} \mu(S \cap T_m)(1 + 3^{-m})$.  Indeed, even though the sets $S \cap T_m$ are not in general acceptable, the sets of the form $S \cap T_{\Pi_1,\Pi_2}$ are, so we can argue as above. }
\end{remark}

\section{Ranks of elliptic curves}\label{ranks}
In the previous section, we computed the average size of the $\phi$-Selmer and $\hat\phi$-Selmer groups in any acceptable family of elliptic curves with $j$-invariant 0.  In this section, we deduce an upper bound on the average rank of $E(\Q)$ in such families.   We also deduce a lower bound on the density of curves~$E_k$ having rank 0 or (3-Selmer) rank 1, respectively. 

\subsection{An upper bound on the average rank of elliptic curves with vanishing $j$-invariant}
For any elliptic curve $E/\Q$ with $j$-invariant 0, we write $r(E)$, $r_3(E)$, $r_\phi(E)$, and $r_{\hat\phi}(E)$ for the ranks of the groups $E(\Q)$, $\Sel_3(E)$, $\Sel_\phi(E)$, and $\Sel_{\hat\phi}(E')$ respectively, where $E' = E/\ker \phi$.  

\begin{proposition}\label{rankineq}
For $\phi \colon E \to E'$ as above, we have
\begin{enumerate}
\item $r(E) \leq r_3(E) \leq r_\phi(E) + r_{\hat\phi}(E)$. 
\item If $c(\phi) = 3^m$, then $m \equiv r_3(E) - \dim_{\F_3}E[3](\Q) \pmod 2$.
\end{enumerate}
\end{proposition}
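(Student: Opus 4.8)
```latex
\textbf{Proof proposal.}  The two parts are essentially bookkeeping exercises in
the long exact sequences attached to $\phi$ and $\hat\phi$, so the plan is to
set up those sequences carefully and then count dimensions.

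\emph{Part (i).}  The composite $\hat\phi\circ\phi$ is multiplication by $3$ on
$E$, so we get a factorization of Selmer groups.  First I would write down the
exact sequence of $G_\Q$-modules
$0 \to E[\phi] \to E[3] \to E'[\hat\phi] \to 0$
(identifying $E'[\hat\phi]$ with $E[3]/E[\phi]$ via $\phi$).  Taking Galois
cohomology and intersecting with the local conditions defining the Selmer
groups yields an exact sequence
\[
0 \to \Sel_\phi(E) \to \Sel_3(E) \to \Sel_{\hat\phi}(E'),
\]
which is the standard ``Selmer groups in an isogeny diagram'' argument (the
local conditions are compatible because the connecting maps for $\phi$, $3$,
and $\hat\phi$ fit into a commutative diagram of Kummer maps).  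This gives
$r_3(E) \le r_\phi(E) + r_{\hat\phi}(E)$ after taking $\F_3$-dimensions.  The
inequality $r(E) \le r_3(E)$ is the usual one: $E(\Q)/3E(\Q)$ injects into
$\Sel_3(E)$, and $\dim_{\F_3} E(\Q)/3E(\Q) = r(E) + \dim_{\F_3} E(\Q)[3] \ge
r(E)$.  I would state this chain of inclusions explicitly and be done.

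\emph{Part (ii).}  This is the ``parity half'' of the same circle of ideas.
The key input is Cassels' formula~(\ref{casformula}),
\[
c(\phi) = \frac{|E'(\Q)[\hat\phi]|\cdot|\Sel_\phi(E)|}{|E(\Q)[\phi]|\cdot|\Sel_{\hat\phi}(E')|},
\]
together with the exact sequence from Part~(i) fleshed out to the right.
Concretely, I would extend the sequence to
\[
0 \to \Sel_\phi(E) \to \Sel_3(E) \to \Sel_{\hat\phi}(E') \to \dots,
\]
but rather than control the cokernel I would just compare orders multiplicatively:
from $c(\phi)=3^m$ and Cassels' formula, $|\Sel_\phi(E)|/|\Sel_{\hat\phi}(E')|$
equals $3^m$ times $|E(\Q)[\phi]|/|E'(\Q)[\hat\phi]|$.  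Since $E[\phi]$ and
$E'[\hat\phi]$ each have order $1$ or $3$, taking $3$-adic valuations gives
$r_\phi(E) - r_{\hat\phi}(E) \equiv m + \bigl(\dim_{\F_3}E(\Q)[\phi] -
\dim_{\F_3}E'(\Q)[\hat\phi]\bigr) \pmod 2$.  Then I would combine this with
$r_3(E) = r_\phi(E) + r_{\hat\phi}(E) - (\text{correction from the tail of the
exact sequence})$; working mod $2$, $r_\phi(E)+r_{\hat\phi}(E) \equiv
r_\phi(E)-r_{\hat\phi}(E)$, and the correction term has even contribution once
one accounts for the self-duality.  Finally, $\dim_{\F_3}E(\Q)[3] =
\dim_{\F_3}E(\Q)[\phi] + \dim_{\F_3}E'(\Q)[\hat\phi]$ (from the exact sequence
of torsion modules, noting $E'(\Q)[\hat\phi] \cong \ker(\phi\colon E(\Q)[3]\to
E'(\Q)[\hat\phi])$-complement), so the torsion corrections reassemble into
$\dim_{\F_3}E[3](\Q)$.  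This yields $m \equiv r_3(E) - \dim_{\F_3}E[3](\Q)
\pmod 2$.

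\emph{Main obstacle.}  The delicate point is the exactness and self-duality
bookkeeping in Part~(ii): one must be sure that the ``extra'' terms beyond
$\Sel_{\hat\phi}(E')$ in the long exact sequence (coming from $H^1(G_\Q,
E[\phi])$ and the $\phi$-part of $\Sha$) contribute an even power of $3$ to the
relevant order ratio, so that only the parity asserted survives.  This is
exactly where Cassels' formula does the heavy lifting—it packages the
Poitou--Tate/Cassels--Tate pairing input that makes the $\Sha$ contributions
cancel in the ratio—so the real work is to invoke~(\ref{casformula}) correctly
and track the local torsion factors $|E(\Q_v)[\phi]|$ versus the global ones.
I expect that, once Cassels' formula is in hand, the rest is a short valuation
computation, and the cleanest write-up avoids the long exact sequence tail
entirely by arguing purely with orders.
```
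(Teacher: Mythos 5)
Your overall strategy is the paper's: an exact sequence of Selmer groups attached to $0 \to E[\phi] \to E[3] \to E'[\hat\phi] \to 0$, combined with Cassels' formula and the evenness of the $\Sha$-contribution coming from the Cassels--Tate pairing. But your execution contains two false intermediate claims whose errors happen to cancel, and as written the argument does not go through. First, the sequence $0 \to \Sel_\phi(E) \to \Sel_3(E) \to \Sel_{\hat\phi}(E')$ is not exact on the left: the paper uses the five-term sequence of Kloosterman--Schaefer,
\[
0 \to \frac{E'(\Q)[\hat \phi]}{\phi(E(\Q)[3])} \to \Sel_\phi(E) \to \Sel_3(E) \to \Sel_{\hat\phi}(E') \to \frac{\Sha(E')[\hat\phi]}{\phi(\Sha(E)[3])} \to 0,
\]
and the kernel term on the left is genuinely nonzero in general (e.g.\ for $k=-3$ one has $E'(\Q)[\hat\phi]\neq 0$ but $E(\Q)[3]=0$). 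This does not damage part (i), since the inequality only needs exactness at $\Sel_3(E)$, but it matters for part (ii). Second, your identity $\dim_{\F_3}E(\Q)[3] = \dim_{\F_3}E(\Q)[\phi] + \dim_{\F_3}E'(\Q)[\hat\phi]$ is false in general: taking $\Q$-points of $0 \to E[\phi] \to E[3] \to E'[\hat\phi] \to 0$ is only left exact, and in the same example $k=-3$ the two sides are $0$ and $1$. Writing $t=\dim E(\Q)[3]$, $d=\dim E(\Q)[\phi]$, $d'=\dim E'(\Q)[\hat\phi]$, the defect is exactly $d+d'-t = \dim \frac{E'(\Q)[\hat\phi]}{\phi(E(\Q)[3])}$, i.e.\ the very kernel term you dropped. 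Consequently your assertion that ``the correction term has even contribution'' is also false: only the $\Sha$-quotient at the right end is even-dimensional (because $\phi$ is adjoint for the Cassels--Tate pairing, which induces a nondegenerate alternating form on it), while the left-hand correction has dimension $d+d'-t$, which can be odd.

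The repair is exactly the paper's bookkeeping: from the five-term sequence and the evenness of the $\Sha$-term, $r_3 \equiv r_\phi + r_{\hat\phi} - (d+d'-t) \pmod 2$, while Cassels' formula gives $m = r_\phi - r_{\hat\phi} + d' - d \equiv r_\phi + r_{\hat\phi} + d + d' \pmod 2$; subtracting yields $m \equiv r_3 - t \pmod 2$ honestly, with no appeal to $t = d+d'$. So you should keep the full five-term sequence rather than trying to ``avoid the tail'': the left-hand kernel term is not an inconvenience to be argued away but the precise quantity that reconciles the global torsion counts, and the only place where a duality argument is needed is to show the $\Sha$-quotient has even $\F_3$-dimension.
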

\begin{proof}
Claim $(i)$ follows from the exact sequence~\cite[Corollary~1]{remke},
\begin{equation}\label{remke}
0 \to \dfrac{E'(\Q)[\hat \phi]}{\phi(E(\Q)[3])} \to \Sel_\phi(E) \to \Sel_3(E) \to \Sel_{\hat\phi}(E') \to \frac{\Sha(E')[\hat\phi]}{\phi(\Sha(E)[3])} \to 0.
\end{equation}      
As $\phi$ is adjoint with respect to the Cassels--Tate pairing \cite[Theorem~1.2]{casVIII}, there is an induced non-degenerate alternating pairing on the $\F_3$-vector space $\frac{\Sha(E')[\hat\phi]}{\phi(\Sha(E)[3])}$.  The latter therefore has even $\F_3$-dimension, and $(ii)$ now follows from  (\ref{casformula}) and (\ref{remke}). 
\end{proof}

\begin{theorem}\label{mrankthm}
Let $m \in \Z$, and suppose $S \subset T_m$ is an acceptable set.  Then the $($limsup of the$)$ average rank of $E_k$, for $k \in S$ ordered by absolute value, is at most $|m| + 3^{-|m|}$.
\end{theorem}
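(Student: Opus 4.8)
The plan is to combine the rank inequality of Proposition~\ref{rankineq}(i) with the average Selmer sizes of Theorem~\ref{mavg}(iv), using the elementary bound $2r \le 3^r - 1$ (valid for every integer $r \ge 0$) to convert control of $\sum|\Sel|$ into control of $\sum r(E_k)$. It suffices to treat the case $m \ge 0$: the case $m<0$ follows by the identical argument with the roles of $\phi$ and $\hat\phi$ (equivalently, of $E_k$ and the isogenous curve $E_{-27k}$, which has the same rank) interchanged, using $\mathrm{avg}\,|\Sel_\phi(E_k)| = 1+3^m = 1+3^{-|m|}$ in place of $\mathrm{avg}\,|\Sel_{\hat\phi}(E_{-27k})|$. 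Fix $X$ and write $N = \#\{k \in S : |k| < X\}$; since $S$ is acceptable and nonempty, $N \gg_S X$.

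First I would record that, for every $k$, Proposition~\ref{rankineq}(i) gives $r(E_k) \le r_3(E_k) \le r_\phi(E_k) + r_{\hat\phi}(E_k)$, where $r_\phi(E_k) = \dim_{\F_3}\Sel_\phi(E_k)$ and $r_{\hat\phi}(E_k) = \dim_{\F_3}\Sel_{\hat\phi}(E_{-27k})$. Next I would feed in Cassels' formula (\ref{casformula}): for $k \in T_m$ it reads $|\Sel_\phi(E_k)| = 3^m\cdot\frac{|E_k(\Q)[\phi]|}{|E_{-27k}(\Q)[\hat\phi]|}\cdot|\Sel_{\hat\phi}(E_{-27k})|$, so that $r_\phi(E_k) - r_{\hat\phi}(E_k) = m + \epsilon_k$ with $\epsilon_k \in \{-1,0,1\}$, and moreover $\epsilon_k = 0$ unless $E_k(\Q)$ or $E_{-27k}(\Q)$ has nontrivial $3$-torsion. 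By Remark~\ref{3torsion}, the latter happens for at most $O(\sqrt X) = o(N)$ values of $k$ with $|k| < X$; call these the \emph{exceptional} $k$.

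With this in hand I would write, for every $k$,
\[ r(E_k) \;\le\; r_\phi(E_k) + r_{\hat\phi}(E_k) \;=\; 2\,r_{\hat\phi}(E_k) + m + \epsilon_k \;\le\; \bigl(|\Sel_{\hat\phi}(E_{-27k})| - 1\bigr) + m + |\epsilon_k|, \]
the last step by $2r \le 3^r - 1$. Summing over $k \in S$ with $|k| < X$: the terms $|\epsilon_k|$ contribute $O(\#\{\text{exceptional }k\}) = o(N)$, the constant $m$ contributes $mN$, and by Theorem~\ref{mavg}(iv) one has $\sum_{|k|<X}\bigl(|\Sel_{\hat\phi}(E_{-27k})| - 1\bigr) = (1+3^{-m})N - N + o(N) = 3^{-m}N + o(N)$. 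Hence $\sum_{k\in S,\,|k|<X} r(E_k) \le (m + 3^{-m})N + o(N)$; dividing by $N$ and letting $X \to \infty$ shows the limsup of the average rank is at most $m + 3^{-m} = |m| + 3^{-|m|}$.

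I expect the only delicate point — bookkeeping rather than a genuine obstacle — to be the exceptional $k$: their Selmer ranks could a priori be unbounded, so one must not simply discard them, which is why everything is phrased through the identity $r_\phi(E_k) - r_{\hat\phi}(E_k) = m + \epsilon_k$ with $\epsilon_k$ \emph{bounded}. Conceptually, the reason the estimate sharpens to $|m| + 3^{-|m|}$ rather than the weaker $\tfrac12(3^m + 3^{-m})$ that one gets from the two Selmer averages alone is precisely that Cassels' formula lets us replace $r_\phi + r_{\hat\phi}$ by $2\,r_{\hat\phi} + m$ before invoking the exponential estimate $2r \le 3^r-1$.
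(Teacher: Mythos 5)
Your proof is correct, and it rests on the same three inputs as the paper's: Proposition~\ref{rankineq}(i), the average Selmer sizes from Theorem~\ref{mavg}(iv), and the elementary inequality $2t+1\le 3^t$. The deployment differs slightly, in a way worth noting. The paper bounds the averages of $r_\phi$ and $r_{\hat\phi}$ \emph{separately}: it applies the shifted inequality (\ref{minequality}), i.e.\ $2t+1\le 3^t$ with $t=r_\phi-m$, together with the average $1+3^m$ of $|\Sel_\phi|$ to get an average bound $m+\tfrac12 3^{-m}$ for $r_\phi$, and the unshifted case with the average $1+3^{-m}$ of $|\Sel_{\hat\phi}|$ to get $\tfrac12 3^{-m}$ for $r_{\hat\phi}$; adding gives $m+3^{-m}$ with no need to compare the two Selmer ranks curve by curve. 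You instead use Cassels' formula (\ref{casformula}) pointwise to write $r_\phi+r_{\hat\phi}=2r_{\hat\phi}+m+\epsilon_k$ and apply the inequality once, which needs only the single average of $|\Sel_{\hat\phi}|$ but obliges you to control the exceptional set where $\epsilon_k\ne 0$. You handle that correctly, though your citation of Remark~\ref{3torsion} is off target: that remark concerns the six $3$-torsion points \emph{outside} $E_k[\phi]$, whereas what you actually need is that $E_k(\Q)[\phi]\ne 0$ forces $k$ to be a perfect square and $E_{-27k}(\Q)[\hat\phi]\ne 0$ forces $-3k$ to be one, which indeed gives only $O(\sqrt X)=o(N)$ exceptional $k$ with $|k|<X$. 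Both routes yield the same bound $|m|+3^{-|m|}$; yours makes explicit why the naive $\tfrac12(3^m+3^{-m})$ can be improved, while the paper's version sidesteps the torsion bookkeeping entirely by never relating $r_\phi(E_k)$ and $r_{\hat\phi}(E_k)$ for an individual curve.
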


\begin{proof} First assume that $m \geq 0$.  Then one has the general inequality
\begin{equation}\label{minequality}
2r_{\phi}(E) + 1 - 2m\leq 3^{r_\phi(E) - m},
\end{equation}
so that 
\[r_\phi(E) \leq m - \frac12 +\frac{|\Sel_\phi(E)|}{2 \cdot 3^m}.\]
By Theorem \ref{mavg}, the average size of $\Sel_{\phi}(E_k)$ for $k \in S \subset T_m$ is $1 + 3^m$, so we conclude that the limsup of the average of $r_\phi(E_k)$ is at most $m + \frac12 3^{-m}$.  Similarly, the average size of $\Sel_{\hat\phi}(E_{-27k})$ is $1 + 3^{-m}$, so we conclude from the case $m = 0$ of inequality  (\ref{minequality}) that the limsup of the average of $r_{\hat\phi}(E_k)$ is at most $\frac12 3^{-m}$.  Combining these bounds with Proposition~\ref{rankineq}(i), the limsup of the average rank of $E_k$ for $k \in S$ is seen to be at most $m + 3^{-m}$.  For $m < 0$, the roles of $\phi$ and $\hat\phi$ are reversed, so the average rank bounds that we obtain for $m$ and $-m$ are the same.      
\end{proof}

\begin{proof}[\bf Proof of Theorem \ref{rankthm}]
We observe that, by the explicit congruence description of $T_m$ in \S\ref{Tmproof}, any $k\in T_m$ (for $|m|>1$) must satisfy $p_j^2\mid k$ for some $j\geq |m|-1$, where we again use $p_i$ to denote the $i$-th prime.  Hence, by the uniformity estimate~\cite[Proposition~29]{bst}, we have that $$\sum_{|m|>M}\sum_{{\scriptstyle k\in T_m}\atop {\scriptstyle k<X}}\Sel_{\phi}(E_k)=\sum_{j\geq M}O(X/p_j^2)=O(X/p_M),$$ where the implied constant is independent of~$M$.  
Theorem \ref{mrankthm} therefore gives the following bound on the (limsup of the) average rank of the elliptic curves $E_k$ for non-zero $k \in \Z$:
\[\limsup \,\avg_{k \in \Z} r(E_k) \leq \lim_{M\to\infty} \sum_{m = -M}^M \mu(T_m)(|m| + 3^{-|m|})+O(1/p_M) =  \sum_{m = -\infty}^\infty \mu(T_m)(|m| + 3^{-|m|}),\]
where $\mu(T_m)$ denotes the density of integers in the set $T_m$.  Using Proposition \ref{selmeratio} and the explicit description given in \S\ref{Tmproof} of $T_m$ as the disjoint union 
\[T_m = \bigcup_n \bigcup T_{\Pi_1,\Pi_2},\]
 we can estimate the densities $\mu(T_m)$ to arbitrary precision.  One may also prove, e.g., that
\[\sum_{|m| > 5} \mu(T_m)(|m| + 3^{-|m|}) < 0.001.\]  
A computation in Mathematica produces the table of densities in the introduction, and it follows that the average rank of $E_k$ is less than 1.29.  We can obtain a better bound on the average rank if we use the 
the work of \cite{ruth}.  Ruth's main result---that the average size of the 2-Selmer group of $E_k$, $k\in S$, is~3---is stated for the set $S = \Z$, but the proof appears to hold for any acceptable  set~$S$.   Since the 3-Selmer---and therefore (due to the work of \cite{dokchitser}) the 2-Selmer parity---is constant for 100\% of $E_k$ for $k\in T_m$ by Theorems~\ref{mavg}(iii) and \ref{rankineq}(ii), this implies that the average rank of $E_k$, $k\in S$, is less than 4/3 on any $T_m$, giving the bound:
\[\avg_{k \in \Z} r(E) \leq \sum_{m = -1}^1 \mu(T_m)(|m| + 3^{-|m|}) + \frac43(1 - \mu(S_1 \cup S_0 \cup S_1)) < 1.21.\]
\end{proof}

%

\subsection{A lower bound on the proportion of elliptic curves with vanishing $j$-invariant having $3$-Selmer rank 0 or 1}\label{proportions}

\begin{proof}[\bf Proof of Theorem \ref{posrank0}]
Let $s_0$ denote the liminf of the natural density of $k\in T_0$ for which $r_\phi(E_k)=0$.  Then by Theorem~\ref{mavg}(iv), $s_0$ is also the liminf of the natural density of $k\in T_0$ for which $r_{\hat\phi}(E_{k})=0$.  Since the average size of $|\Sel_\phi(E_k)|$, $k\in\Z$, is 2, we must have 
$$s_0 \cdot 1 + (1-s_0) \cdot 3 \leq 2,$$
implying $s_0\geq 1/2$. It follows that a lower density of at least 1/2 of the curves $E_k$, $k\in T_0$, satisfy $r_\phi(E_k)=r_{\hat\phi}(E_{k})=r_3(E_k)=0$, and thus $r(E_k)=0$. Theorem~\ref{posrank0} follows, since the density of $T_0$ is at least $.399$, and $(1/2)(.399)> .199$. 
\end{proof}

\begin{proof}[\bf Proof of Theorem \ref{rank1}]
Let $s_1$ denote the liminf of the natural density of $k\in T_1$ for which $r_\phi(E_k)=1$.  Then by Theorem~\ref{mavg}(iv), $s_1$ is also the liminf of the natural density of $k\in T_1$ for which $r_{\hat\phi}(E_k)=0$.  Since the average size of $|\Sel_\phi(E_k)|$, $k\in T_1$, is 4, we see that
$$s_1 \cdot 3 + (1-s_1)\cdot  9 \leq 4,$$
implying $s_1\geq 5/6$. 
In conjunction with the exact sequence (\ref{remke}), this implies that a lower density of at least 5/6 of the curves $E_k$, $k\in T_1$, satisfy $r_{\hat\phi}(E_k)=0$ and $r_\phi(E_k)=r_3(E_k)=1$ (and under the assumption that $\Sha(E_k)[3^\infty]$ is finite, that $r(E_k)=1$ as well). The identical argument with $T_{-1}$ in place of $T_1$ shows that a lower density of at least 5/6 of the curves $E_k$, $k\in T_{-1}$, satisfy $r_\phi(E_k)=0$ and $r_{\hat\phi}(E_k)= 1$.  By (\ref{remke}) and the fact that $\frac{\Sha(E')[\hat\phi]}{\phi(\Sha(E)[3])}$ has even rank, these curves also satisfy $r_3(E_k)=1$ (and again $r(E_k)=1$ under the assumption that $\Sha(E_{-27k})[3]$ is trivial).   Theorem~\ref{rank1} now follows, because the density of $T_1\cup T_{-1}$ is at least $.494$, and $(5/6)(.494)> .411$.
\end{proof}

Since $.199+.411=.61$, we conclude that at least $61\%$ of all $E_k$ have rank 0 or 1, which is Corollary~\ref{rank0or1}.

\section*{Acknowledgments}

We are grateful to Benedict Gross and Ila Varma for helpful conversations.  The first author was supported by a Simons Investigator Grant and NSF grant DMS-1001828.  The second author was supported by NSF grants DMS-1100511 and DMS-1502161. The third author was partially supported by NSF grant DMS-0943832.

\end{document}